\theoremstyle{plain}
\newtheorem{theorem}{Theorem}[section]
\newtheorem{proposition}{Proposition}[section]
\newtheorem{corollary}{Corollary}[section]
\newtheorem{lemma}{Lemma}[section]
\newtheorem*{blanktheorem}{Theorem}
\newtheorem*{blankassumption}{Assumption}
\theoremstyle{remark}
\newtheorem{definition}{Definition}[section]
\newtheorem{remark}{Remark}[section]
\newtheorem{assumption}{Assumption}[section]
\DeclareMathOperator*{\esssup}{ess\,sup}
\DeclareMathOperator{\dist}{dist}
\DeclareMathOperator{\supp}{supp}
\DeclareMathOperator{\cpct}{Cap}
\DeclareMathOperator{\clos}{clos}
\DeclareMathOperator{\lin}{span}
\DeclareMathOperator{\curl}{curl}
\DeclareMathOperator{\diverg}{div}
\DeclareMathOperator{\dom}{dom}
\DeclareMathOperator{\dvol}{dvol}
\begin{document}

\title[Local Dirichlet forms, Hodge theory, and the 
NSE on    fractals]{Local Dirichlet forms, Hodge theory, and the 
Navier-Stokes equations on topologically one-dimensional  fractals}
\author[Michael Hinz]{Michael Hinz$^{1,2}$}
\address{Department of Mathematics, Universit\"at Bielefeld, Postfach 100131, D-33501 Bielefeld, Germany}
\email{mhinz@math.uni-bielefeld.de}
\thanks{$^1$Research supported in part by  the Alexander von Humboldt Foundation (Feodor Lynen Research Fellowship Program) and carried out during a stay at the Department of Mathematics, University of Connecticut, Storrs, CT 06269-3009 USA}
\author[Alexander Teplyaev]{Alexander Teplyaev$^2$}
\address{Department of Mathematics, University of Connecticut, Storrs, CT 06269-3009 USA}
\email{Alexander.Teplyaev@uconn.edu}
\thanks{$^2$Research supported in part by NSF grant DMS-0505622}

\date{\today}

\begin{abstract}

We consider finite energy and $L^2$ differential forms associated with strongly local regular Dirichlet forms on compact connected topologically one-dimensional spaces. We  introduce notions of local exactness and local harmonicity and prove the Hodge decomposition, which in our context says that the orthogonal complement to the space of all exact 1-forms coincides with the closed span of all locally harmonic 1-forms.  Then we introduce a related Hodge Laplacian and define a notion harmonicity for finite energy 1-forms. As as corollary, under a certain capacity-separation assumption, we prove that the space of harmonic 1-forms is nontrivial if and only if the classical  \v{C}ech cohomology is nontrivial. In the examples of classical self-similar fractals these spaces typically are either trivial or infinitely dimensional. Finally, we study Navier-Stokes type models and prove that under our assumptions they have only steady state divergence-free solutions. In particular, we solve the existence and uniqueness problem for the Navier-Stokes and Euler equations for a large class of fractals that are topologically one-dimensional but can have arbitrary Hausdorff and spectral dimensions. 

\tableofcontents
\end{abstract}
\subjclass[2000]{Primary 
31E05 
60J45; 
Secondary 
28A80
31C25
35J25 
35Q30 
46L87 
47F05 
58J65
60J60 
81Q35}
\maketitle

\section{Introduction}
In this article we study the finite energy de Rham complex on compact connected topologically one-dimensional spaces $X$ which carry a strongly local regular Dirichlet (energy) form $(\mathcal{E},\mathcal{F})$. There are large classes of fractals of arbitrary Hausdorff dimension $1\leq d_H <\infty$ and various possible spectral dimensions that have these properties (see \cite[and references therein]{Ba,BBKT,Ki01,Ki12,Li,RT10,S-POTA,ST-BE,T08} and Remark~\ref{rem-ds}). 
During the recent years Dirichlet forms have proven to be efficient tools in various areas of analysis and probability. Some recent papers related to our research that use Dirichlet form methods are for instance \cite{BBK06,BGK11,B11}. 
We investigate the de Rham complex consisting of finite energy $0$-forms and square integrable $1$-forms that results from constructions introduced in  \cite{CS03,CS09,IRT}. We introduce new localization techniques and prove that locally exact forms are dense in the space of $1$-forms, which allows to define a Hodge Laplacian on  $1$-forms.

One of the aims of our article is to generalize results previously obtained in \cite{CGIS11} and \cite{IRT}, which connect $1$-forms and topology (thus we relate differential geometry on fractals and topology). These papers contain, among several other results, Hodge type decompositions for the Sierpinski gasket with its standard Dirichlet form, \cite{CGIS11}, and more generally, for finitely ramified fractals carrying a resistance form, \cite{IRT}. 
It was shown in \cite{IRT}, in the finitely ramified case, that the Hilbert space of harmonic forms is trivial if and only if the space is a tree. Moreover, harmonic forms 'count' the cycles of these topologically one-dimensional spaces. In contrast to classical smooth examples (e.g. \cite{BT82, Jost, W71}), the space of harmonic forms on self-similar fractals will often be either trivial or infinite dimensional. 

Being a Hilbert space, the space of $1$-forms is self-dual, which allows to identify $1$-forms and vector fields. Doing so allows us to introduce some elements of vector analysis, as recently done in \cite{HRT} (which generalizes earlier approaches to vector analysis on fractals, see \cite{Ki93h,Ki08,Ku93,St00t,T08}). This is a part of a more comprehensive program addressing vector equations on general, possibly fractal, spaces which carry a diffusion, a topic that is still brand new. Some related scalar PDE are discussed in \cite{HRT} and first applications to magnetic fields are considered in \cite{HT12a,HT12}. 

In this paper  we introduce hydrodynamical models on fractals, such as   Navier-Stokes type systems (see e.g. \cite{AMR, ChM, Te83} for background). In a sense, this justifies that our notion of finite energy differential forms is mathematically natural and implies physically relevant properties of classical differential equations on fractals. Note, however, that on classical fractals (e.g. the Sierpinski gasket) there are no continuous tangent vector fields in the classical sense (see \cite{T04} for explanations) and so our notions correspond to the so called measurable Riemannian geometry (a term introduced by Kigami in \cite{Ki08}).

Overall our results revolve around the the notions of local exactness and local harmonicity of differential $1$-forms. As parts of the paper are quite technical, we take a moment to outline some ideas, definitions and main results. Consider a smooth compact one-dimensional manifold $M$ without boundary. Of course there is not much variety, as any such $M$ must be diffeomorphic to the circle. The de Rham 
complex of $M$ has the structure
\begin{equation}\label{E:deRham}
0\xrightarrow{} \Omega^0(M) \xrightarrow{d_0} \Omega^1(M)\xrightarrow{d_1} 0,
\end{equation}
where $\Omega^0(M)=C^\infty(M)$ and $\Omega^1(M)$ are the spaces of smooth functions and $1$-forms on $M$, respectively, and the $d_i$ denote the respective instances of the exterior derivative. If, although not needed to study de Rham theory, the manifold $M$ is endowed with a Riemannian structure, then we can consider the Dirichlet integral
\begin{equation}\label{E:Em}
\mathcal{E}_M(f)=\int_M |df|^2\dvol, \ \ f\in C^\infty(M),
\end{equation}
where $d=d_0$ and $\dvol$ denotes the Riemannian volume. Taking its closure yields a symmetric local regular Dirichlet $(\mathcal{E}_M,H^1(M))$ form on $L_2(M,\dvol)$, where $H^1(M)$ is the corresponding Sobolev space of functions on $M$. In the present paper we investigate general topologically one-dimensional compact metric spaces $X$ that carry a finite Radon measure $m$ with full support and a symmetric strongly local regular Dirichlet form $(\mathcal{E},\mathcal{F})$ on $L_2(X,m)$. We assume that it admits a spectral gap and, to exclude potential theoretic complications, that the associated transition kernels are absolutely continuous with respect to the reference measure $m$. We will frequently make use of the related energy measures $\Gamma(f)$, which can be defined for any energy finite function $f\in\mathcal{F}$ and in particular satisfy 
\[\mathcal{E}(f)=\int_Xd\Gamma(f).\]
More details can be found in Section \ref{S:Setup} or in \cite{CF,FOT, LeJan78}. For the Dirichlet form (\ref{E:Em}) for instance we observe $\Gamma(f)=|df|^2\dvol$, $f\in H^1(M)$. 

What concerns differential forms on $X$, the first question would be how to define them. We follow the approach of Cipriani and Sauvageot \cite{CS03, CS09} that, in some sense, reverses the order of definition of the objects in (\ref{E:deRham}) and (\ref{E:Em}). There the authors start with a Dirichlet form and construct a Hilbert space $\mathcal{H}$ as the closure of tensor products of bounded, energy finite functions (in a certain anti-symmetrizing norm, which may be expressed using the energy measures induced by the Dirichlet form). They refer to it as \emph{the space of $1$-forms on $X$}, and so will we. The space of ($m$-a.e.) bounded energy finite functions $\mathcal{B}:=\mathcal{F}\cap L_\infty(X,m)$ forms an algebra. A related derivation operator $\partial:\mathcal{B}\to \mathcal{H}$ takes such functions into $1$-forms, and it may be extended to a unbounded closed linear operator 
on $L_2(X,m)$ with domain $\mathcal{F}$. A special case of this construction yields Weaver's derivation for $L_\infty$-diffusions, \cite{W00}. In cases where both theories coexist, $\partial f$ coincides with the upper gradient of $f$ in the sense of Cheeger, see \cite{Ch99} and also \cite{CS03, Hei01, HeiK98, KZ12, W00}. The norm in $\mathcal{H}$ is such that for $1$-forms of type $g\partial f$ we have
\[\left\|g\partial f\right\|_\mathcal{H}^2=\int_X g^2d\Gamma(f),\]
and the collection of such $1$-forms $g\partial f$ is dense in $\mathcal{H}$. This approach is an $L_2$-formulation, and in general differential $1$-forms $\omega\in\mathcal{H}$ cannot be evaluated pointwise. Note that for the Dirichlet form as in (\ref{E:Em}) we have in particular
\[\left\|g\partial f\right\|_\mathcal{H}^2=\int|gdf|^2\dvol, \ \ f,g\in C^\infty(M),\]
extending (\ref{E:Em}). Therefore $\mathcal{H}$ is a generalization of the space $L_2(M,\dvol, \Lambda^1TM)$ of square integrable differential $1$-forms on $M$, and the operator $\partial$ generalizes the closure of $d=d_0: C^\infty(M)\to L_2(M,\dvol, \Lambda^1TM)$ in $L_2(M,\dvol)$. If in (\ref{E:deRham}) we replace the spaces $\Omega^i(M)$ by the spaces of square integrable functions and $1$-forms and consider $L_2$-derivations $\partial_i$ instead of the $d_i$, we get an $L_2$-differential complex. Its generalization in our setup should read
\begin{equation}\label{E:complex}
0\xrightarrow{} L_2(X,m)\xrightarrow{\partial_0}\mathcal{H}\xrightarrow{\partial_1}0
\end{equation}
with $\partial_0=\partial$, which is an unbounded closed densely defined linear operator 
on $L_2(X,m)$ with domain $\mathcal{F}$. However, a concept of $2$-forms and a derivation $\partial_1$ taking $1$-forms into $2$-forms is not yet fully established and therefore writing (\ref{E:complex}) still involves the \emph{assumption} that the space of $2$-forms is trivial. Wherever we state definitions motivated by this assumption we will mention it explicitly and comment on it. 

A key property of the de Rham complex (in fact, of any differential complex) is $d^2=0$, i.e. all exact $1$-forms $\omega=df$ are closed, $d\omega=0$. In the one-dimensional case of course all $1$-forms are closed. On the other hand every closed $1$-form in the de Rham sense is locally exact by a special case of the Poincar\'e lemma. In \cite{CGIS11} it has been shown that this implication cannot be expected to hold in our context. More precisely, the authors have constructed a non-locally exact $1$-form on the Sierpinski gasket, which is topologically one-dimensional and fits into our framework. Here we define local exactness as follows, cf. Definition \ref{D:locexact}: A $1$-form $\omega\in\mathcal{H}$ is called \emph{locally exact} if there exist a finite open cover $\mathcal{U}=\left\lbrace U_\alpha\right\rbrace_{\alpha\in J}$ of $X$ and functions $f_\alpha\in\mathcal{B}$, $\alpha\in J$, such that
\[\omega\mathbf{1}_{U_\alpha}=\partial f_\alpha\mathbf{1}_{U_\alpha}, \ \alpha\in J .\]

We formulate and prove our results under the following standing assumption,
stated as Assumption \ref{A:assumption} below. Given a closed set $F\subset X$ we denote by $\mathcal{S}^F$ the collection of all functions $f\in \mathcal{F}\cap L_\infty(X,m)$ for which there exists a finite open cover of $F$ such that the quasi-continuous version $\widetilde{f}$ of $f$ is q.e. constant on any connected component of this cover.

\begin{blankassumption}
There is a topological base $\mathcal{O}$, stable under taking finite unions of sets, such that for any $V\in\mathcal{O}$ and any $f\in\mathcal{F}$ there is a sequence of functions $(f_n)_n\subset \mathcal{S}^{\partial V}$ such that $\lim_n\mathcal{E}(f-f_n)=0$. 
\end{blankassumption}
This assumption can for instance be verified if the boundaries $\partial V$ have a certain Cantor set structure and the domains of the trace Dirichlet forms on the $\partial V$ have dense subspaces consisting of H\"older continuous functions. For a large class of self-similar Sierpinski carpets this can be concluded using \cite[Remark 2.7 and Remark 3.10]{HinoKumagai06}.

Then Theorem \ref{T:Hcoincide} (i) reads as follows.

\begin{blanktheorem}
The collection of locally exact $1$-forms is dense in $\mathcal{H}$.
\end{blanktheorem}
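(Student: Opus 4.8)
The plan is to use density of the elementary $1$-forms $g\,\partial f$, $f,g\in\mathcal{B}$, in $\mathcal{H}$, and to approximate each such form, within any prescribed $\epsilon>0$, by a locally exact one. The mechanism is a \emph{local constant approximation} of the coefficient: since
$\|g\,\partial f-h\,\partial f\|_{\mathcal{H}}^2=\int_X(g-h)^2\,d\Gamma(f)$, replacing $g$ by a locally constant profile costs only the $\Gamma(f)$-weighted oscillation of $g$, and where $h$ is constant the corresponding piece of $h\,\partial f$ is exact. The subtlety is that the level sets of $h$ are never open in a connected space, so one cannot patch globally-constant local primitives; instead one must allow genuinely different local primitives on the members of a fine open cover, which is exactly what the definition of local exactness permits, since on an overlap $U_\alpha\cap U_\beta$ it only requires $\partial(f_\alpha-f_\beta)$ to vanish there.

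Concretely, first I would invoke topological one-dimensionality: for every $\delta>0$ there is a finite open cover $\mathcal{U}=\{U_\alpha\}_{\alpha\in J}$ of $X$ of mesh less than $\delta$ and of order at most two (no triple overlaps), which can be arranged so that the overlaps $U_\alpha\cap U_\beta$ are small and carry little $\Gamma(f)$-mass; I would also fix a subordinate partition of unity $\{\phi_\alpha\}\subset\mathcal{B}$, available because $(\mathcal{E},\mathcal{F})$ is regular. On each $U_\alpha$ one replaces $g$ by a value $c_\alpha$ it essentially attains there, so that $g\,\partial f$ agrees on $U_\alpha$ with the exact form $\partial(c_\alpha f)$ up to an error whose $\mathcal{H}$-norm is the $\Gamma(f)$-weighted oscillation of $g$ over $U_\alpha$. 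The approximant $\tilde\omega$ is then assembled from the pieces $\partial(c_\alpha f)$ — for instance as $\tilde\omega=\sum_\alpha\phi_\alpha\,\partial(c_\alpha f)$, using $\phi_\alpha\,\partial(c_\alpha f)=\partial(c_\alpha\phi_\alpha f)-c_\alpha f\,\partial\phi_\alpha$ and $\sum_\alpha\partial\phi_\alpha=0$ to control the partition-of-unity cross-terms (their contribution is governed by the spread of the $c_\alpha$, hence by the oscillations), or by direct patching across the thin overlaps. By construction $\tilde\omega$ restricts to an exact form on each $U_\alpha$, so it is locally exact with respect to $\mathcal{U}$, and one checks $\|g\,\partial f-\tilde\omega\|_{\mathcal{H}}\to0$ as $\delta\to0$.

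The main obstacle is the quantitative local step: unlike the classical Poincar\'e lemma, closedness does \emph{not} force local exactness here — the non-locally-exact $1$-form on the Sierpinski gasket in \cite{CGIS11} is the obstruction — so on each $U_\alpha$ one can only hope for exactness up to an error, and that error must be made summable to less than $\epsilon$ as the mesh tends to $0$. This requires showing, in an averaged sense, that energy-finite functions have $\Gamma(f)$-oscillation over $U_\alpha$ tending to zero with $\diam U_\alpha$, together with control of how much of the part of $(\mathcal{E},\mathcal{F})$ carried by a small open set of a topologically one-dimensional space can lie off the exact forms; this is precisely where the spectral-gap and absolute-continuity hypotheses and the one-dimensional topology enter. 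Once these estimates are in place, the bookkeeping of overlap and partition-of-unity terms is routine, and letting $\epsilon\to0$ yields the density statement.
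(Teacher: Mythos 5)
Your high-level intuition is the right one --- reduce to forms with a coefficient that is constant on the pieces of a fine decomposition, so that each piece is exact --- but the construction you propose does not actually produce locally exact forms, and the step you defer (``once these estimates are in place'') is the entire difficulty. Recall that Definition \ref{D:locexact} demands \emph{exact} equality $\omega\mathbf{1}_{U_\alpha}=\partial f_\alpha\mathbf{1}_{U_\alpha}$ on each cover element, not equality up to a small error. Your candidate $\tilde\omega=\sum_\alpha\phi_\alpha\,\partial(c_\alpha f)=H\,\partial f$ with $H=\sum_\alpha c_\alpha\phi_\alpha$ fails this: on $U_\alpha$ the function $H$ interpolates between $c_\alpha$ and the neighboring $c_\beta$, and $H\partial f\mathbf{1}_{U_\alpha}=\partial(Hf)\mathbf{1}_{U_\alpha}-f\,\partial H\,\mathbf{1}_{U_\alpha}$ carries the uncontrolled term $f\,\partial H$. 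The alternative of a genuinely piecewise constant $h=\sum_\alpha c_\alpha\mathbf{1}_{A_\alpha}$ runs into the same wall from the other side: on an open $U_\alpha$ meeting two cells, $h\partial f\mathbf{1}_{U_\alpha}$ would have to be $\partial f_\alpha\mathbf{1}_{U_\alpha}$ for a single $f_\alpha\in\mathcal{B}$ that jumps from $c_\alpha f$ to $c_\beta f$ across the interface, which is impossible unless $\Gamma(f)$ charges no neighborhood of that interface. Note also that $g\in\mathcal{B}$ is only an $m$-a.e.\ bounded energy-finite function, so ``oscillation of $g$ over $U_\alpha$ tends to zero with the mesh'' is not available without a further (routine, but unstated) reduction to continuous $g$ in $L_2(\Gamma(f))$.

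The paper's proof supplies exactly the mechanism you are missing, and it acts on the \emph{first} tensor factor rather than on the coefficient. The basic building blocks are forms $\partial f\,\mathbf{1}_V$ where $\partial V$ is topologically zero-dimensional and $f\in\mathcal{S}^{\partial V}$ is locally constant q.e.\ on a finite open cover $W$ of $\partial V$; then $\Gamma(f)(W)=0$, so $\partial f\,\mathbf{1}_V=\partial f\,\mathbf{1}_{V\cup W}$, and the two-set open cover $\{V\cup W,\ \overline{V}^{\,c}\cup W\}$ exhibits the form as \emph{exactly} locally exact (it is $\partial f$ on the first set and $0=\partial 0$ on the second); this is Lemma \ref{L:proto}(i). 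The real work is then to show such forms are dense, which requires the Dirichlet-subdomain machinery: the density of $\mathcal{S}^{F}$ via hitting projections and the maximum principle (Proposition \ref{P:continuous}, where Assumption \ref{A:abscont} enters), the reconstruction $\mathcal{F}=\mathcal{F}^{F_1}+\mathcal{F}^{F_2}$ for disjoint zero-dimensional $F_i$ (Theorem \ref{T:reconstruct}), and the identification $\widetilde{\mathcal{H}}=\mathcal{H}$ via a weak-convergence and Banach--Saks argument (Theorem \ref{T:H}). One-dimensionality is used not through any oscillation or energy-localization estimate, but through the purely topological fact that open sets can be refined to have totally disconnected boundaries, which is what makes the sets $\mathcal{S}^{\partial V}$ rich enough. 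Without a substitute for this interface-energy-vanishing device, your argument does not close.
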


This is a strong indication that we should assume the space of $2$-forms to be trivial, as in (\ref{E:complex}): any useful definition of $\partial_1$ should be local. Therefore $\partial_1$ should vanish on locally exact forms, i.e. they should be closed. As they are dense in $\mathcal{H}$ it is not too far fetched to expect $\mathcal{H}=ker\:\partial_1$. 

Next, recall that in the classical one-dimensional case we have the $L_2$-Hodge decomposition
\[L_2(M,\dvol,\Lambda^1 TM)=Im\:\partial_0\oplus \mathbb{H},\]
where $Im\:\partial_0$ is the image of $\partial_0$ and $\mathbb{H}$ is the space of $\omega\in L_2(M,\dvol,\Lambda^1 TM)$ that satisfy
$0=\partial_1\omega=\partial_0^ \ast\omega$. Here $\partial^\ast_0$ denotes the $0$-codifferential of $\partial_0$, that is the formal adjoint of $\partial_0$. The space $\mathbb{H}$ is called the space of harmonic $1$-forms. See for instance \cite{GMS}. Also in our setup $Im\:\partial$ is a closed subspace of $\mathcal{H}$ and therefore 
\begin{equation}\label{E:hodgedecompintro}
\mathcal{H}=Im\:\partial \oplus \mathcal{H}^1(X),
\end{equation}
where $\mathcal{H}^1(X)=(Im\:\partial)^\bot$ is the orthogonal complement of $Im\:\partial$. For the example (\ref{E:Em}) we clearly have $\mathcal{H}^1(M)=\mathbb{H}$. We may ask about the significance of the space $\mathcal{H}^1(X)$ in the general case. 

In classical theory central results of de Rham theory tell that the first de Rham cohomology 
\[H_{dR}^1(M):=ker\:d_1/Im\:d_0\]
of a smooth compact manifold $M$ is finite dimensional and isomorphic to the first \v{C}ech cohomology $\check{H}^1(M)$ of $M$, see e.g. \cite{BT82, Jost, W71}. Moreover, the Hodge theorem tells that in every cohomology class $[\omega]\in H_{dR}^1(M)$ there exist a unique member of $\mathbb{H}$. Also $\mathbb{H}$ is finite dimensional, as follows from compactness and ellipticity. In \cite{IRT} it was shown that for finitely ramified fractals $X$ carrying a resistance form the space $\mathcal{H}^1(X)$ is trivial if and only if $X$ is a tree. Moreover, its elements were seen to 'count' the cycles of graph approximations of these topologically one-dimensional spaces. For p.c.f. self-similar fractals for instance the space $\mathcal{H}^1(X)$ is therefore typically either trivial or infinite dimensional. For combinatorially finite metric graphs it may be finite dimensional, \cite{IRT}, and trivial if and only if the graph is a tree. Note that our theory is very different from the smooth situation, because it allows complicated gluing of circles. From a topological point of view the classical Sierpinski gasket for instance arises as a limit of spaces obtained by gluing circles with 'contradicting' orientations. 

Recall that in the $L_2$-context cohomology groups should generally not be expected to allow simple statements on topology, as their dimensions are no topological invariants. What we prove here in our setup is the following qualitative result: If $\mathcal{H}^1(X)$ is nontrivial, then also the first \v{C}ech cohomology $\check{H}^1(X)$ of $X$ must be nontrivial, and under some potential theoretic condition a nontrivial first cohomology $\check{H}^1(X)$ also guarantees the nontriviality of $\mathcal{H}^1(X)$. To establish this result we introduce a notion of local harmonicity: A $1$-form $\omega\in\mathcal{H}$ is called \emph{locally harmonic} if there exist a finite open cover $\mathcal{U}=\left\lbrace U_\alpha\right\rbrace_{\alpha\in J}$ of $X$ and functions $h_\alpha\in\mathcal{B}$, $\alpha\in J$, such that each $h_\alpha$ is harmonic on $U_\alpha$ and
\[\omega\mathbf{1}_{U_\alpha}=\partial h_\alpha\mathbf{1}_{U_\alpha}, \ \alpha\in J .\]
Here the term 'harmonic function' is used in the Dirichlet form sense, see Section \ref{S:Setup}. Another denseness result is Theorem \ref{T:Hcoincide} (ii). 

\begin{blanktheorem}
The collection of locally harmonic $1$-forms is dense in $\mathcal{H}^1(X)$.
\end{blanktheorem}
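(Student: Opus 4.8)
The plan is to combine the Hodge decomposition \eqref{E:hodgedecompintro} with the first denseness result (Theorem \ref{T:Hcoincide}~(i)) and a splitting of locally exact forms into a local-potential part that is again locally exact \emph{by globally defined pieces} and a genuinely local-harmonic remainder. Concretely, fix $\omega \in \mathcal{H}^1(X)$ and $\varepsilon > 0$. Since locally exact forms are dense in $\mathcal{H}$, pick $\eta$ locally exact with $\|\omega - \eta\|_\mathcal{H} < \varepsilon$; write $P$ for the orthogonal projection onto $\mathcal{H}^1(X) = (\operatorname{Im}\partial)^\bot$, so that $\|\omega - P\eta\|_\mathcal{H} = \|P(\omega-\eta)\|_\mathcal{H} < \varepsilon$. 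It therefore suffices to approximate $P\eta$, i.e. to show that for a locally exact $\eta$ the projection $P\eta$ can be approximated in $\mathcal{H}$ by locally harmonic forms; since $P$ kills exact pieces, we expect $P\eta$ itself to be locally harmonic (or a limit of such).

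The key local step is the following. Let $\mathcal{U} = \{U_\alpha\}_{\alpha \in J}$ be the finite open cover and $f_\alpha \in \mathcal{B}$ the functions with $\eta\mathbf{1}_{U_\alpha} = \partial f_\alpha\mathbf{1}_{U_\alpha}$. On each $U_\alpha$, invoke the Dirichlet-form solvability of the Dirichlet problem (using the spectral gap and the absolute continuity assumptions, which rule out capacity obstructions): there is a decomposition $f_\alpha = h_\alpha + g_\alpha$ on $U_\alpha$ with $h_\alpha \in \mathcal{B}$ harmonic on $U_\alpha$ and $g_\alpha \in \mathcal{B}$ lying in the local analogue of the domain of $\partial_0^\ast$, so that $\partial g_\alpha$ is (locally) in $\operatorname{Im}\partial$ in a sense compatible with the global projection. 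Then $\eta\mathbf{1}_{U_\alpha} = \partial h_\alpha\mathbf{1}_{U_\alpha} + \partial g_\alpha\mathbf{1}_{U_\alpha}$, the first summand being locally harmonic data and the second the piece that the projection $P$ must remove. One glues these local decompositions using a partition of unity $\{\phi_\alpha\}$ subordinate to $\mathcal{U}$ with $\phi_\alpha \in \mathcal{B}$ — available because $\mathcal{B}$ is an algebra of energy-finite functions and $X$ is compact — and the Leibniz rule $\partial(\phi_\alpha u) = \phi_\alpha\partial u + u\,\partial\phi_\alpha$ for the derivation $\partial$. The cross terms $u\,\partial\phi_\alpha$ produced by the partition of unity must be controlled; here one uses that $\sum_\alpha \partial\phi_\alpha = \partial(\sum_\alpha \phi_\alpha) = \partial 1 = 0$, so after regrouping the cross terms assemble into something lying in $\operatorname{Im}\partial$ (or of small norm on overlaps), hence annihilated by $P$ up to an error controllable by a refinement of the cover.

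Carrying this out, one sets $\eta_{\mathrm{loc}} := \sum_\alpha \partial(\phi_\alpha h_\alpha)$ restricted appropriately, checks that it is locally harmonic by exhibiting, on a suitable refinement $\{V_\beta\}$ of $\mathcal{U}$ (each $V_\beta$ contained in a single $U_\alpha$), a harmonic function whose derivative matches $\eta_{\mathrm{loc}}$ on $V_\beta$ — this uses that a sum of the form $\phi_\alpha h_\alpha + (\text{harmonic correction})$ can again be arranged harmonic on the smaller set, or more simply that on $V_\beta \subset U_{\alpha(\beta)}$ one just takes $h_{\alpha(\beta)}$ itself — and then estimates $\|P\eta - \eta_{\mathrm{loc}}\|_\mathcal{H}$ by the norm of the discarded image-of-$\partial$ terms, which is $O(\varepsilon)$ after choosing the cover fine enough. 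Combining with the first reduction yields $\|\omega - \eta_{\mathrm{loc}}\|_\mathcal{H} = O(\varepsilon)$ with $\eta_{\mathrm{loc}}$ locally harmonic, proving denseness in $\mathcal{H}^1(X)$.

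The main obstacle I expect is the gluing: ensuring that after multiplying by the partition of unity the resulting global form is still genuinely \emph{locally harmonic} (not merely locally exact) and that the cross terms $h_\alpha\,\partial\phi_\alpha$ — which are \emph{not} individually harmonic data — can be absorbed either into $\operatorname{Im}\partial$ (so that $P$ deletes them) or into an arbitrarily small error. The cleanest route is probably to avoid a single global partition of unity and instead argue that $\mathcal{H}^1(X)$ is spanned by projections $P(\partial f)$ of \emph{single} locally exact generators, reduce to $\eta = g\partial f$ with $g, f \in \mathcal{B}$, and handle each such generator by a one-step local Hodge/Dirichlet decomposition on the members of a cover adapted to $f$, controlling overlaps via the strong locality of $\mathcal{E}$ (energy measures of products localize) rather than via Leibniz cross terms.
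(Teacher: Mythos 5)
Your opening reduction is sound and matches the paper's strategy: since $\omega=\mathcal{P}_{\mathcal{H}^1}\omega$, any locally exact $\eta$ with $\left\|\omega-\eta\right\|_{\mathcal{H}}<\varepsilon$ gives $\left\|\omega-\mathcal{P}_{\mathcal{H}^1}\eta\right\|_{\mathcal{H}}<\varepsilon$, so everything hinges on showing that $\mathcal{P}_{\mathcal{H}^1}\eta$ is locally harmonic. But your execution of that step has a genuine gap, which you partly flag yourself. First, the local splitting $f_\alpha=h_\alpha+g_\alpha$ by a ``Dirichlet problem on $U_\alpha$'' produces pieces $\partial g_\alpha$ that are each globally exact anyway, and there is no argument that they assemble into the \emph{single} global exact form $\mathcal{P}_{Im\:\partial}\eta$ that the projection actually removes; ``compatible with the global projection'' is precisely the assertion that needs proof. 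Second, the partition-of-unity gluing creates cross terms $h_\alpha\,\partial\phi_\alpha$, which are products of functions with exact forms and are in general neither exact nor small; the identity $\sum_\alpha\partial\phi_\alpha=0$ does not make $\sum_\alpha h_\alpha\,\partial\phi_\alpha$ vanish or lie in $Im\:\partial$, and no mechanism is given by which refining the cover shrinks these terms. As written, the construction of $\eta_{\mathrm{loc}}$ and the estimate $\left\|\mathcal{P}_{\mathcal{H}^1}\eta-\eta_{\mathrm{loc}}\right\|_{\mathcal{H}}=O(\varepsilon)$ are not established.

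The missing idea is that no local Dirichlet problem and no partition of unity on the level of $1$-forms are needed: harmonicity of the local potentials of $\mathcal{P}_{\mathcal{H}^1}\eta$ comes for free from orthogonality to $Im\:\partial$. Writing $\partial g=\mathcal{P}_{Im\:\partial}\eta$, one has $(\mathcal{P}_{\mathcal{H}^1}\eta)\mathbf{1}_{U_\alpha}=\partial(f_\alpha-g)\mathbf{1}_{U_\alpha}$, and for $\varphi\in C_0(U_\alpha)\cap\mathcal{F}$ the support lemma (Lemma \ref{L:support}) yields $\mathcal{E}(\varphi,f_\alpha-g)=\left\langle\partial\varphi,(\mathcal{P}_{\mathcal{H}^1}\eta)\mathbf{1}_{U_\alpha}\right\rangle_{\mathcal{H}}=\left\langle\partial\varphi,\mathcal{P}_{\mathcal{H}^1}\eta\right\rangle_{\mathcal{H}}=0$, i.e. $f_\alpha-g$ is harmonic on $U_\alpha$ (modulo checking that $g$ can be taken in $\mathcal{B}$). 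This is exactly how the paper argues in Lemma \ref{L:proto}(ii), except that it does not start from an arbitrary locally exact $\eta$: it works with the special generators $\partial f\mathbf{1}_V$, where $\partial V$ is topologically zero-dimensional and $f$ is locally constant on a neighborhood $W$ of $\partial V$, so that the two-set cover $\left\lbrace V\cup W,\ \overline{V}^c\cup W\right\rbrace$ satisfies $(\partial f\mathbf{1}_V)\mathbf{1}_{V\cup W}=\partial f\mathbf{1}_{V\cup W}$ and $(\partial f\mathbf{1}_V)\mathbf{1}_{\overline{V}^c\cup W}=0$ exactly, with no overlap terms at all. The real work is the density of these generators (Theorem \ref{T:H}, Remark \ref{R:secondremark}, Lemma \ref{L:protodense}), which rests on the Dirichlet-subdomain reconstruction $\mathcal{F}=\mathcal{F}^{F_1}+\mathcal{F}^{F_2}$ of Theorem \ref{T:reconstruct} --- a decomposition of \emph{functions} adapted to zero-dimensional boundaries, which is what replaces the partition-of-unity gluing you were trying to control.
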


Finally, we define the notion of harmonicity for $1$-forms. To do so, we introduce a Hodge Laplacian $\Delta_1$ on $1$-forms by setting
\begin{equation}\label{E:hodgelaplaceintro}
\Delta_1:=\partial\partial^\ast
\end{equation}
and observe the following, cf. Theorem \ref{T:hodgesa}.

\begin{blanktheorem}
The Hodge Laplacian $\Delta_1$ defines a self-adjoint operator on $\mathcal{H}$.
\end{blanktheorem}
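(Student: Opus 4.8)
The plan is to deduce self-adjointness of $\Delta_1$ from von Neumann's theorem on operators of the form $T^\ast T$, applied to $T=\partial^\ast$. Recall that the $L_2$-complex (\ref{E:complex}) furnishes $\partial=\partial_0$ as a closed, densely defined linear operator from $L_2(X,m)$ into $\mathcal{H}$ with domain $\mathcal{F}$; density of $\mathcal{F}$ in $L_2(X,m)$ is part of the regularity hypothesis on $(\mathcal{E},\mathcal{F})$.

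First I would record the elementary adjoint facts. Since $\partial$ is densely defined, its Hilbert space adjoint $\partial^\ast$ is a closed operator from $\mathcal{H}$ into $L_2(X,m)$. Since $\partial$ is closed, it is in particular closable, so $\partial^\ast$ is densely defined in $\mathcal{H}$ and $\partial^{\ast\ast}=\partial$. Thus $T:=\partial^\ast$ is a closed, densely defined operator from $\mathcal{H}$ into $L_2(X,m)$ with $T^\ast=\partial$.

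Next I would invoke von Neumann's theorem: for any closed densely defined operator $T$ between Hilbert spaces, the operator $T^\ast T$, with domain $\{u\in\dom T:\ Tu\in\dom T^\ast\}$, is densely defined, self-adjoint and non-negative, and $\dom(T^\ast T)$ is a core for $T$. Taking $T=\partial^\ast$ gives precisely $T^\ast T=\partial\partial^\ast=\Delta_1$, with domain $\{\omega\in\dom(\partial^\ast):\ \partial^\ast\omega\in\mathcal{F}\}$, hence $\Delta_1$ is a non-negative self-adjoint operator. One then checks that it indeed acts on $\mathcal{H}$: $\partial^\ast$ maps its domain into $L_2(X,m)$ and $\partial$ maps $\mathcal{F}$ back into $\mathcal{H}$, so $\Delta_1\colon\dom(\Delta_1)\subseteq\mathcal{H}\to\mathcal{H}$.

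Since all hypotheses are already available from the construction, there is no real obstacle here; the only points deserving an explicit line are that $\partial^\ast$ is densely defined in $\mathcal{H}$ (which uses that $\partial$ is closed) and that $\dom(\Delta_1)$ is dense in $\mathcal{H}$ (which is part of the conclusion of von Neumann's theorem). Optionally one may wish to describe $\dom(\Delta_1)$ more concretely in terms of $\mathcal{F}$ and $\mathcal{B}$, and to note that $\ker\Delta_1=\ker\partial^\ast=\mathcal{H}^1(X)$, tying the construction back to the Hodge decomposition (\ref{E:hodgedecompintro}).
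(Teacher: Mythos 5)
Your argument is correct, and it takes a genuinely different route from the paper's. You deduce self-adjointness from von Neumann's theorem that $T^{\ast}T$ is self-adjoint for any closed densely defined operator $T$ between Hilbert spaces, applied with $T$ the adjoint of $\partial$; the paper instead argues by hand, using Lemma \ref{L:firstlemma} to produce an explicit $\mathcal{E}_1$-dense subspace $\mathcal{C}\subset\mathcal{F}$ with $A\mathcal{C}\subset\mathcal{B}$, observing that $\partial(\mathcal{C})\subset \dom\:\Delta_1$ and hence (together with $ker\:\partial^\ast=\mathcal{H}^1(X)$ and the decomposition (\ref{E:Hodge})) that $\dom\:\Delta_1$ is dense in $\mathcal{H}$, and then verifying the symmetry identity $\left\langle \Delta_1\eta,\omega\right\rangle_\mathcal{H}=-\left\langle \partial^\ast\eta,\partial^\ast\omega\right\rangle_{L_2(X,m)}=\left\langle\eta,\Delta_1\omega\right\rangle_\mathcal{H}$. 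Your route is shorter, gives self-adjointness in one stroke rather than only the inclusion $\Delta_1\subset\Delta_1^\ast$ that the symmetry computation yields directly, and delivers semiboundedness and the fact that $\dom\:\Delta_1$ is a core for $\partial^\ast$ at no extra cost; the paper's route has the advantage of exhibiting the concrete dense subspace $\partial(\mathcal{C})$ of $\dom\:\Delta_1$ on which $\Delta_1(\partial f)=\partial(Af)$. One small correction: with the sign convention (\ref{E:IbP}), the Hilbert-space adjoint of $\partial$ is $-\partial^\ast$ rather than $\partial^\ast$, so the operator von Neumann's theorem applies to is $T=-\partial^\ast$ with $T^\ast=\partial$ and $T^\ast T=-\partial\partial^\ast=-\Delta_1$; consequently $\Delta_1$ is self-adjoint and \emph{non-positive} (consistent with $\left\langle\Delta_1\omega,\omega\right\rangle_\mathcal{H}=-\left\|\partial^\ast\omega\right\|_{L_2(X,m)}^2$ and with $A=\partial^\ast\partial$ being the non-positive generator), not non-negative as you state. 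The domain identification and the self-adjointness conclusion are unaffected.
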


Of course definition (\ref{E:hodgelaplaceintro}) is made with the expectation that the space of $2$-forms will be trivial, i.e. that any $1$-form on $X$ will be closed. In view of our previous results this seems most reasonable. We agree to say that a $1$-form $\omega\in\mathcal{H}$ is \emph{harmonic} if it is in the kernel of $\Delta_1$. We characterize harmonicity in Theorem \ref{T:critharmonic}. Combined with (\ref{E:hodgedecompintro}) it leads to an analog of the Hodge decomposition theorem and tells that $\mathcal{H}^1(X)$ should be called the \emph{space of harmonic $1$-forms}.

\begin{blanktheorem}
A $1$-form $\omega\in\mathcal{H}$ is harmonic if and only if $\omega\in\mathcal{H}^1(X)$. Consequently a $1$-form is harmonic if and only if it is orthogonal to the space of exact $1$-forms, and every $1$-form may be written as the orthogonal sum of an exact and a harmonic part. 
\end{blanktheorem}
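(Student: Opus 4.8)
The plan is to deduce this characterization directly from the definition $\Delta_1 = \partial\partial^\ast$ together with the Hodge decomposition (\ref{E:hodgedecompintro}). First I would observe that $\omega \in \ker \Delta_1$ means $\partial\partial^\ast \omega = 0$, and since $\partial$ is a closed operator with $\dom\partial = \mathcal{F}$, the standard Hilbert-space identity $\langle \partial\partial^\ast\omega, \omega'\rangle = \langle \partial^\ast\omega, \partial^\ast\omega'\rangle$ (valid on the appropriate domains) gives, upon taking $\omega' = \omega$, that $\partial\partial^\ast\omega = 0$ is equivalent to $\|\partial^\ast\omega\|_{L_2(X,m)}^2 = 0$, i.e. $\omega \in \ker \partial^\ast$. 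So the whole statement reduces to the claim $\ker\partial^\ast = (\Im\:\partial)^\bot = \mathcal{H}^1(X)$. This is precisely the elementary functional-analytic fact that for a densely defined closed operator the kernel of the adjoint is the orthogonal complement of the range; one must only be slightly careful because $\partial$ maps $L_2(X,m) \to \mathcal{H}$, so $\partial^\ast: \dom\partial^\ast \subseteq \mathcal{H} \to L_2(X,m)$ and the identity $\ker\partial^\ast = (\Im\:\partial)^{\bot_{\mathcal{H}}}$ holds with the orthogonal complement taken inside $\mathcal{H}$, which is exactly how $\mathcal{H}^1(X)$ was defined.

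Next I would assemble the three conclusions. The equivalence ``$\omega$ harmonic $\iff \omega \in \mathcal{H}^1(X)$'' is the content of the previous paragraph. For the second sentence, note that an exact $1$-form is by definition an element of $\Im\:\partial$ (closure already being a closed subspace, as recalled in the excerpt just before (\ref{E:hodgedecompintro})), so ``orthogonal to the space of exact $1$-forms'' means ``orthogonal to $\Im\:\partial$,'' which by definition is membership in $\mathcal{H}^1(X) = (\Im\:\partial)^\bot$; combined with the first equivalence this gives the stated characterization of harmonicity. For the third sentence, the orthogonal decomposition $\mathcal{H} = \Im\:\partial \oplus \mathcal{H}^1(X)$ is exactly (\ref{E:hodgedecompintro}), and having just identified $\mathcal{H}^1(X)$ as the space of harmonic $1$-forms, every $\omega \in \mathcal{H}$ decomposes uniquely as $\omega = \partial f + \omega_h$ with $\partial f$ exact and $\omega_h$ harmonic.

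The only genuinely substantive point — and hence the step I would flag as the main (though still mild) obstacle — is the justification that $\Delta_1 = \partial\partial^\ast$ is self-adjoint and that $\ker(\partial\partial^\ast) = \ker\partial^\ast$ with the correct domains. This is where one invokes the preceding Theorem \ref{T:hodgesa}: since $\partial$ is closed and densely defined, $\partial\partial^\ast$ is automatically self-adjoint and nonnegative, and for nonnegative self-adjoint operators $\ker(\partial\partial^\ast) = \ker((\partial\partial^\ast)^{1/2})$; but one can argue more directly that for $\omega \in \dom(\partial\partial^\ast)$ one has $\partial\partial^\ast\omega = 0$ iff $\langle \partial\partial^\ast\omega,\omega\rangle_{\mathcal{H}} = 0$ iff $\|\partial^\ast\omega\|^2 = 0$, which needs $\omega \in \dom\partial^\ast$ and $\partial^\ast\omega \in \dom\partial = \mathcal{F}$ — all guaranteed by $\omega \in \dom\Delta_1$. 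One should also confirm, consistently with the running assumption that the space of $2$-forms is trivial, that no further term $\partial_1^\ast\partial_1$ enters $\Delta_1$; this is the modeling hypothesis already flagged in the excerpt, so it requires only a remark rather than a proof.

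Finally, I would cross-check the result against Theorem \ref{T:critharmonic} (the characterization of harmonicity referenced just before the statement): whatever concrete criterion that theorem provides — presumably that $\omega$ is harmonic iff $\int_X f\,d\langle \partial^\ast\omega,\cdot\rangle$ vanishes for all $f \in \mathcal{B}$, or equivalently $\langle \omega, \partial f\rangle_{\mathcal{H}} = 0$ for all $f \in \mathcal{F}$ — should be manifestly equivalent to $\omega \perp \Im\:\partial$, closing the loop. The write-up would therefore be short: one line reducing $\ker\Delta_1$ to $\ker\partial^\ast$, one line identifying $\ker\partial^\ast$ with $(\Im\:\partial)^\bot = \mathcal{H}^1(X)$, and one line reading off the two corollaries from (\ref{E:hodgedecompintro}).
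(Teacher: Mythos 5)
Your argument is correct and follows essentially the same route as the paper: the paper also establishes $\ker\partial^\ast=\mathcal{H}^1(X)$ from the adjoint relation (\ref{E:IbP}) together with (\ref{E:Hodge}), proves the forward implication by noting $\partial^\ast\omega=0$ trivially puts $\omega$ in $\dom\Delta_1$ with $\Delta_1\omega=0$, and proves the converse by pairing $\Delta_1\omega=\partial\partial^\ast\omega$ against test elements to conclude $\partial^\ast\omega=0$, exactly your reduction $\ker\Delta_1=\ker\partial^\ast$. The only cosmetic difference is a sign: with the paper's convention the adjoint of $\partial$ is $-\partial^\ast$, so $\left\langle\partial\partial^\ast\omega,\omega\right\rangle_\mathcal{H}=-\left\|\partial^\ast\omega\right\|_{L_2(X,m)}^2$, which does not affect the kernel argument.
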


As an application of our results we study Navier-Stokes type models on compact connected topologically one-dimensional spaces. On the circle the Navier-Stokes system simplifies to an Euler type equation, which has only steady state solutions. We observe a similar behavior in the present case. The main difference is that on a fractal space, one can have infinitely many nontrivial solutions corresponding to various cycles in the space. If the system is considered without boundary conditions, nontrivial solutions  exist if and only if the first \v{C}ech cohomology does not vanish. If time-independent boundary conditions are imposed, there may exist additional nontrivial solutions that are gradients of harmonic functions. Note  that we do not have to consider so-called tamed Navier-Stokes equations (see \cite{RZ} and references therein). 

\begin{remark}\label{rem-ds}

In several places our arguments crucially rely on topological one-dimensionality and compactness. The results apply to classical smooth examples (the real line, intervals and circles), quantum graphs (see \cite{BerKuchBook} and references therein), as well as to fractals such as p.c.f. self similar sets or nested fractals, \cite{Ba, HMT, IRT, Ki01, Li}, generalized Sierpinski carpets \cite{BB99, BBKT, MTW} of topological dimension one, Barlow-Evans-Laakso spaces and their generalizations \cite{BE04, La00, Stei10, S-POTA, ST-BE},  diamond fractals \cite{KSW} and some random fractals \cite{H92, H97}. Fractal examples are not required to be finitely ramified \cite{T08} or self-similar. 
Our research is influenced by the analysis with respect to the energy measures on singular spaces 
\cite{Hino05,Hino06,Hino12,Hino13,Kajino2012,Ki08}. 
We are especially interested in applications to analysis and geometry on metric measure spaces, see \cite[and references therein]{Ch99, Hei01, HeiK98, KZ12, W00}.

Note in particular that there are spaces of \emph{any} Hausdorff dimension $1\leq d_H<\infty$ to which our results apply, for instance, spaces of Barlow-Evans-Laakso type. Of special importance is the relation between our work and \cite{MTW}, where the authors consider generalized Sierpinski carpets of positive two-dimensional Lebesgue measure that are topologically one-dimensional. This relation will be the subject of further study. 

All this also implies that we can find examples for a broad range of spectral dimensions $d_S$.  Recall that by definition $d_S=2d_H/d_w$, where $d_w\geq 2$ is the so-called walk index (walk dimension). In \cite{Ba04} a related result for weighted graphs states that for any pair of numbers $(\alpha, d_w)$ with $\alpha\geq 1$ and $2\leq d_w\leq 1+\alpha$, there exist a weighted graph carrying an Ahlfors $\alpha$-regular measure and a random walk with walk index $d_w$. For prefractal graphs $d_w>2$ is typical. 
\end{remark}

This paper is organized as follows. In Section \ref{S:Setup} we start with a symmetric strongly local regular Dirichlet form $(\mathcal{E},\mathcal{F})$ and recall the definition of energy measures, capacities and harmonic functions. We state a maximum principle and finally some facts about hitting kernels. Section \ref{S:subdomains} is rather technical. It introduces Dirichlet subdomains obtained by completion from functions that are locally constant on a neighborhood of a given compact set. Using suitable covers, partitions of unity and the regularity of the Dirichlet form, the original Dirichlet domain $\mathcal{F}$ is recovered as the sum of two such subdomains for disjoint compact sets, see Theorem \ref{T:reconstruct}. Following \cite{CS03, CS09, IRT} we introduce $1$-forms and derivations in Section \ref{S:locally}. We define and discuss the notions of local exactness and local harmonicity as outlined above. Note that in \cite{CGIS11} and \cite{IRT} the spaces under consideration possess rigid structures which support rather transparent proofs by graph approximations. In our paper we do not assume any specific cell structure, and turn instead to finite open covers in order to connect $1$-forms and topology. Using solely the definition of $1$-forms, it therefore seems rather difficult to describe the space $\mathcal{H}^1(X)$, but the notion of local harmonicity allows to link simple $1$-forms to finite open covers. To do so we show that a certain space $\mathcal{S}_{loc}$ of tensor products of bounded energy finite functions and indicators of open sets with zero dimensional boundaries is dense in the space $\mathcal{H}$. The space $\mathcal{S}_{loc}$ is considerably easier to handle than $\mathcal{H}$ itself, and using its denseness, we verify that the spaces of locally exact and locally harmonic forms are dense in $\mathcal{H}$ and $\mathcal{H}^1(X)$, respectively. In Section \ref{S:nontrivial} these denseness results are employed to give the mentioned topological characterization for the nontriviality of the space $\mathcal{H}^1(X)$ in terms of the nontriviality of the first \v{C}ech cohomology $\check{H}^1(X)$. The potential theoretic condition needed here is that, roughly speaking, every set disconnecting a connected open set into two disjoint open pieces has positive capacity. Sufficient conditions for the validity of this capacity condition can be given in terms of irreducibility. For precise statements see Section \ref{S:nontrivial}. In Section \ref{S:harmonicforms} we first recall some notions of vector analysis proposed in \cite{HRT}. Then we define the Hodge Laplacian (\ref{E:hodgelaplaceintro}) on $1$-forms and prove it yields a self-adjoint operator. We introduce harmonicity and observe that the space of harmonic $1$-forms equals $\mathcal{H}^1(X)$. Section \ref{S:NS} is devoted to the Navier-Stokes model. Here the main part of the necessary work lies in providing the preliminaries needed to make the model rigorous. In order to achieve this, we use local harmonicity and weighted energy measures to find a suitable substitute for the convection term. The mentioned statements on stationarity and nontriviality of solutions then follow naturally. The case of time-independent boundary conditions is elaborated on in Section \ref{S:resistance} within the context of resistance forms. 

To simplify notation, sequences or families indexed by the naturals will be written with index set suppressed, e.g. $(a_n)_n$ stands for $(a_n)_{n\in\mathbb{N}}$. Similarly, $\lim_n a_n$ abbreviates $\lim_{n\to \infty}a_n$.

\subsection*{Acknowledgements} We are grateful to Michael R\"ockner for helpful comments concerning the Navier-Stokes equations and to Naotaka Kajino for pointing out some errors in an earlier version of this paper.

\section{Setup and preliminaries}\label{S:Setup}

In this section we describe our setup in detail and briefly discuss some preliminary facts used in the sequel.

In our paper $(X,d)$ is assumed to be a connected topologically $1$-dimensional compact (hence separable) metric space. Furthermore, $m$ is assumed to be a finite Radon measure on $X$ such that $m(U)>0$ for any open set $U\subset X$. We finally assume that $(\mathcal{E},\mathcal{F})$ is a symmetric strongly local regular Dirichlet form on $L_2(X,m)$ which has a spectral gap, i.e. there exists some $c>0$ such that for all $f\in\mathcal{F}$ we have 
\[\int_X(f-f_X)^ 2dm\leq c\:\mathcal{E}(f),\]
where
\[f_X=\frac{1}{m(X)}\int_Xfdm.\]
With $\mathcal{E}_1(f,g):=\mathcal{E}(f,g)+\left\langle f, g\right\rangle_{L_2(X,m)}$ the space $\mathcal{F}$ becomes a Hilbert space. The notation
\[\mathcal{B}:=\mathcal{F}\cap L_\infty(X,m)\]
will be used to denote the space of $m$-a.e. bounded energy finite functions on $X$. Endowed with the norm $\left\|f\right\|_\mathcal{B}:=\mathcal{E}_1(f)^{1/2}+\esssup_X|f|$ it becomes a Banach algebra. In particular, 
\[\mathcal{E}(f,g)\leq \left\|f\right\|_\mathcal{B}\left\|g\right\|_\mathcal{B}\ ,\ \ f,g\in\mathcal{B}.\] 
Obviously $\mathcal{B}$ is dense in $\mathcal{F}$.

The regularity of $(\mathcal{E},\mathcal{F})$ implies that for any $f,g\in\mathcal{B}$ there is a unique finite signed Radon measure $\Gamma(f,g)$ on $X$ such that
\begin{equation}\label{E:energymeasure}
2\int_X\varphi d\Gamma(f,g)=\mathcal{E}(\varphi f,g)+\mathcal{E}(\varphi g, f)-\mathcal{E}(fg,\varphi),
\end{equation}
for all $\varphi\in C(X)\cap \mathcal{F}$. $\Gamma(f,g)$ is called the \emph{mutual energy measure of $f$ and $g$}, cf. \cite{CF,FOT, LeJan78}. The mapping $(f,g)\mapsto \Gamma(f,g)$ is symmetric and bilinear on $\mathcal{B}$. Furthermore, $\Gamma(f)\geq 0$ for any $f\in\mathcal{B}$. To define the energy measure of a general element $f\in\mathcal{F}$, let $(f_n)_n\subset \mathcal{B}$ be a sequence approximating $f$ in $\mathcal{F}$ and set
\[\Gamma(f)(\varphi):=\lim_n\int_X\varphi d\Gamma(f_n),\ \ \varphi\in C(X)\cap \mathcal{F}.\]
Since
\[|\left(\int_X\varphi d\Gamma(g_1)\right)^{1/2}-\left(\int_X\varphi d\Gamma(g_2)\right)^{1/2}|\leq \left\|\varphi\right\|_{L_\infty(X,m)}^{1/2} \mathcal{E}(g_1-g_2)^{1/2}\]
for any $g_1, g_2\in\mathcal{B}$ and $\varphi\in C(X)\cap \mathcal{F}$, cf. \cite[Section 3.2]{FOT}, the functional $\Gamma(f)$ is well defined.
By regularity $\Gamma(f)$ extends to a positive linear functional on $C(X)$ and can be represented as
\[\Gamma(f)(\varphi)=\int_X\varphi d\Gamma(f),\ \ \varphi\in C(X),\]
with a uniquely determined finite and nonnegative Radon measure $\Gamma(f)$ on $X$.  By polarization we obtain mutual energy measures $\Gamma(f,g)$ for $f,g\in\mathcal{F}$ and clearly $\Gamma(f,g)(X)=\mathcal{E}(f,g)$. The Cauchy-Schwarz inequality 
\[|\Gamma(f,g)(A)|\leq \Gamma(f)(A)^{1/2}\Gamma(g)(A)^{1/2}\]
for $f,g\in\mathcal{F}$ and $A\subset X$ Borel follows from standard arguments. 

\begin{remark}\label{R:specialstandard}
Another consequence of regularity together with our topological assumptions is that $C(X)\cap\mathcal{F}$ provides a \emph{special standard core} for $\mathcal{E}$, i.e. for any compact set $K$ and any open set $U$ with $K\subset U$ there is a function $\varphi\in C(X)\cap \mathcal{F}$ such that $0\leq \varphi\leq 1$, $\varphi\equiv 1$ on $K$ and $\varphi\equiv 0$ on $U^c$. See \cite[Problem 1.4.1]{FOT}. 
\end{remark}

Let $\cpct$ denote the \emph{capacity} corresponding to $(\mathcal{E},\mathcal{F})$, given by
\[\cpct(A)=\inf\left\lbrace \mathcal{E}_1(u):u\in\mathcal{F}: u\geq 1 \text{ $m$-a.e on $A$}\right\rbrace\]
for open sets $A\subset X$ and by for general $B\subset U$,
\begin{equation}\label{E:capacity}
\cpct(B)=\inf\left\lbrace \cpct(A): \text{ $A\subset X$ open, $B\subset A$}\right\rbrace 
\end{equation}
for general sets $B\subset X$. Any set of zero capacity is a null set for $m$. A statement is said to hold q.e. (\emph{quasi everywhere}) on a subset $A\subset X$ if there exists some set $N\subset A$ with $\cpct(N)=0$ and the statement is valid for all $x\in A\setminus N$.
A Borel function $f$ is said to be \emph{quasi-continuous} if for any $\varepsilon >0$ there exists an open set $G\subset X$ such that $\cpct(G)<\varepsilon$ and $f$ is continuous on $X\setminus G$. Any function $f\in\mathcal{F}$, more precisely, any $m$-equivalence class $f$ of Borel functions in $\mathcal{F}$, possesses a Borel version (a representant of its $m$-equivalence class) $\widetilde{f}$ which is quasi-continuous. 

Let $(P_t)_{t>0}$ and $(G_\alpha)_{\alpha>0}$ be the semigroup of strongly continuous symmetric Markovian operators and the strongly continuous symmetric resolvent uniquely associated to $(\mathcal{E},\mathcal{F})$. By $Y=(Y_t)_{t\geq 0}$ we denote the $m$-symmetric Hunt process on $X$ uniquely associated with $(\mathcal{E},\mathcal{F})$, cf. \cite{FOT, MR}. As $(\mathcal{E},\mathcal{F})$ is local, $Y$ is a diffusion. For any bounded Borel function $f$ on $X$ the function $x\mapsto \mathbb{E}_x[f(Y_t)]$ provides a quasi-continuous version of $x\mapsto P_tf(x)$. We say that the semigroup $(P_t)_{t>0}$ associated with $(\mathcal{E},\mathcal{F})$ satisfies the \emph{absolute continuity condition} if $A\mapsto \mathbb{E}_x\mathbf{1}_A(x)=\mathbb{P}_x(Y_t\in A)$ is absolutely continuous with respect to $m$ for all $t>0$ and all $x\in X$. In other words, the associated transition kernels are assumed to be absolutely continuous.

\begin{remark}
If $(P_t)_{t>0}$ is a Feller semigroup (that is, as $X$ is compact, if each $P_t$ maps $C(X)$ into $C(X)$) and 
\[\left\|P_t\right\|_{L_\infty(X,m)}\leq B(t)\left\|f\right\|_{L_1(X,m)}\]
for all $t>0$ and $f\in L_1(X,m)$ with some function $B:(0,\infty)\to (0,\infty)$, then it obviously satisfies the absolute continuity condition, cf. \cite[p. 262]{CKS87}.
\end{remark}

To exclude further potential theoretic difficulties we make the following additional assumption.
\begin{assumption}\label{A:abscont}
\text{The semigroup $(P_t)_{t>0}$ satisfies the absolute continuity condition.}
\end{assumption}

\begin{definition}\label{def-1}
For an arbitrary set $B\subset X$, define
\begin{equation}\label{E:reducedF}
\mathcal{F}_{B^c}:=\left\lbrace f\in\mathcal{F}: \widetilde{f}=0 \text{ q.e. on $B$}\right\rbrace.
\end{equation}
The space $\mathcal{F}_{B^c}$ is a closed subspace of $\mathcal{F}$. Denote by $\mathcal{H}^B$ its orthogonal complement in $\mathcal{F}$ with respect to $\mathcal{E}_1$ and by $\mathcal{P}_{\mathcal{H}^B}$ the orthogonal projection onto $\mathcal{H}^B$. An element $h\in\mathcal{F}$ is called \emph{harmonic in $B^c$ in the Dirichlet form sense} if $\mathcal{E}(h,\varphi)=0$ for all $\varphi\in C_0(B^c)\cap\mathcal{F}$. Every $h\in \mathcal{H}^B$ is harmonic in $B^c$, and if $B$ is closed, every function harmonic in $B^c$ in the Dirichlet form sense is an element of $\mathcal{H}^B$, cf. \cite[Corollary 2.3.1]{FOT}. 
\end{definition} 

We recall some properties of $\mathcal{P}_{\mathcal{H}^B}$ from \cite{FOT} and a maximum principle. Under Assumption \ref{A:abscont} both follow easily.

\begin{proposition}\label{P:Dirichlet}
Let $B\subset X$ be a Borel set.
\begin{enumerate}
\item[(i)] If for some $u\in\mathcal{F}$ and some constant $c\geq 0$ we have $|\widetilde{u}|\leq c$ q.e. then $|\widetilde{(\mathcal{P}_{\mathcal{H}^B}u)}|\leq c$ q.e. Moreover, $\widetilde{(\mathcal{P}_{\mathcal{H}^B}u)}=\widetilde{u}$ q.e. on $B$.
\item[(ii)] If $h:X\to\mathbb{R}$ is a q.e. bounded Borel function with $m$-equivalence class harmonic in $B^c$ in the Dirichlet form sense, then for q.e. $x\in B^c$ we have
\[\inf_{y\in B} h(y)\leq h(x)\leq \sup_{y\in B}h(y).\]
\end{enumerate}
\end{proposition}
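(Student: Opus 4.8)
The plan is to derive both parts from three facts about $(\mathcal{E},\mathcal{F})$: (a) the Markovian property, i.e.\ that normal contractions increase neither $\mathcal{E}$ nor the $L_2(X,m)$-norm; (b) that constants lie in $\mathcal{F}$ with zero energy, so by Cauchy--Schwarz $\mathcal{E}(u,c)=0$ and hence $\mathcal{E}(u+c)=\mathcal{E}(u)$ for every constant $c$; and (c) that, by the spectral gap, $\mathcal{E}(v)=0$ forces $v$ to be $m$-a.e.\ (hence q.e.) constant. Throughout I would work with quasi-continuous versions, written $\widetilde{\cdot}$, and read $\sup_B$, $\inf_B$ as the q.e.-essential supremum, resp.\ infimum, over $B$.

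\medskip
\noindent\textbf{Part (i).} First I would record the variational meaning of $w:=\mathcal{P}_{\mathcal{H}^B}u$. Since $\mathcal{H}^B=(\mathcal{F}_{B^c})^{\perp}$ we have $w=u-\mathcal{P}_{\mathcal{F}_{B^c}}u$, so $w$ is the unique element of smallest $\mathcal{E}_1$-norm in the closed affine subspace $u+\mathcal{F}_{B^c}=\{v\in\mathcal{F}:\widetilde{v}=\widetilde{u}\text{ q.e.\ on }B\}$; as $w-u\in\mathcal{F}_{B^c}$, this already gives the ``moreover'' assertion $\widetilde{w}=\widetilde{u}$ q.e.\ on $B$. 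For the bound I would set $w':=(w\wedge c)\vee(-c)$. Since $c\ge 0$, this is the symmetric truncation to $[-c,c]$, a normal contraction, so $w'\in\mathcal{F}$, $\mathcal{E}(w')\le\mathcal{E}(w)$, and $|w'|\le|w|$ pointwise gives $\|w'\|_{L_2(X,m)}\le\|w\|_{L_2(X,m)}$, whence $\mathcal{E}_1(w')\le\mathcal{E}_1(w)$. Moreover $\widetilde{w'}=(\widetilde{w}\wedge c)\vee(-c)$ q.e., and on $B$ one has $\widetilde{w}=\widetilde{u}$ with $|\widetilde{u}|\le c$ q.e., so $\widetilde{w'}=\widetilde{u}$ q.e.\ on $B$; thus $w'$ lies in the same affine subspace, and minimality of $w$ forces $w'=w$, i.e.\ $|\widetilde{w}|\le c$ q.e.

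\medskip
\noindent\textbf{Part (ii).} I would reduce to the case of closed $B$ --- the case in which harmonicity is equivalent to membership in $\mathcal{H}^B$, cf.\ Definition~\ref{def-1} and \cite[Corollary 2.3.1]{FOT} --- and, replacing $h$ by $-h$, to the upper estimate. If $\cpct(B)=0$ then $h$ is q.e.\ constant by (c) and the claim is trivial, so assume $\cpct(B)>0$, and set $M:=\sup_B\widetilde{h}$, which is finite as $h$ is q.e.\ bounded. Harmonicity of $h$ in $B^c$ means $\mathcal{E}(h,\varphi)=0$ for $\varphi\in C_0(B^c)\cap\mathcal{F}$; since $B^c$ is open, $C_0(B^c)\cap\mathcal{F}$ is $\mathcal{E}_1$-dense in $\mathcal{F}_{B^c}$, so by Cauchy--Schwarz for $\mathcal{E}$ the function $h$ is $\mathcal{E}$-orthogonal to all of $\mathcal{F}_{B^c}$. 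Fixing a q.e.\ bound $K\ge|M|$ for $|\widetilde{h}|$, I would put $w:=(h\vee(-K))\wedge M=h\wedge M$. Writing $w=\big((h-\tfrac{M-K}{2})\vee(-r)\big)\wedge r+\tfrac{M-K}{2}$ with $r:=\tfrac{M+K}{2}\ge 0$, fact (b) (absorbing the constant) together with fact (a) applied to the symmetric truncation to $[-r,r]$ gives $\mathcal{E}(w)\le\mathcal{E}(h)$; also $\widetilde{w}=\widetilde{h}$ q.e.\ on $B$ (there $\widetilde{h}\le M$ q.e.), so $w-h\in\mathcal{F}_{B^c}$. Hence
\[\mathcal{E}(h)\ge\mathcal{E}(w)=\mathcal{E}\big(h+(w-h)\big)=\mathcal{E}(h)+2\,\mathcal{E}(h,w-h)+\mathcal{E}(w-h)=\mathcal{E}(h)+\mathcal{E}(w-h),\]
so $\mathcal{E}(w-h)=0$; by (c), $w-h$ is q.e.\ constant, and being in $\mathcal{F}_{B^c}$ with $\cpct(B)>0$ it vanishes q.e. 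Therefore $\widetilde{h}\wedge M=\widetilde{h}$ q.e., i.e.\ $\widetilde{h}\le M$ q.e.\ on $B^c$. Applying the same to $-h$ yields $\widetilde{h}\ge\inf_B\widetilde{h}$ q.e.\ on $B^c$, completing the maximum principle.

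\medskip
\noindent\textbf{Expected main obstacle.} No step is deep; the care lies in two places. The first is that $x\mapsto x\wedge M$ is \emph{not} a normal contraction unless $M=0$, so $\mathcal{E}(h\wedge M)\le\mathcal{E}(h)$ cannot be quoted directly but must be routed through the shift by $\tfrac{M-K}{2}$ (legitimate because $\mathcal{E}$ ignores constants) followed by the Markovian property for a genuinely symmetric truncation. The second is the passage from the defining condition of harmonicity to $\mathcal{E}$-orthogonality with all of $\mathcal{F}_{B^c}$, and the attendant manipulation of quasi-continuous representatives and q.e.\ statements: this is where the regularity of $(\mathcal{E},\mathcal{F})$, the special standard core of Remark~\ref{R:specialstandard}, and Assumption~\ref{A:abscont} are genuinely used, and for closed $B$ it is exactly the content of \cite[Corollary 2.3.1]{FOT} --- which is why the proposition follows ``easily''.
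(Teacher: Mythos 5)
Your proof is correct in all essentials, but it takes a genuinely different route from the paper's. The paper argues probabilistically: under Assumption \ref{A:abscont} polar sets and capacity-zero sets coincide, a quasi-continuous version of $\mathcal{P}_{\mathcal{H}^B}u$ is identified with the hitting operator $H_B\widetilde{u}(x)=\mathbb{E}^x\left[e^{-\sigma_B}\widetilde{u}(Y_{\sigma_B})\right]$ via \cite[Theorem 4.3.1]{FOT} (most of the paper's effort goes into establishing $H_B\widetilde{u}=\widetilde{u}$ q.e.\ on $B$ through $1$-excessive functions and resolvent approximation --- a point that is immediate in your setup from $w-u\in\mathcal{F}_{B^c}$), and part (ii) comes from the identity $h(x)=\mathbb{E}^x[h(Y_{t\wedge\tau_D})]$ of \cite[Proposition 2.5]{BBKT} plus polarity of exceptional sets. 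You instead argue variationally: (i) via the minimal-$\mathcal{E}_1$-norm characterization of the projection combined with the symmetric truncation to $[-c,c]$ (a normal contraction), and (ii) via the classical energy-comparison maximum principle, with the shift-and-truncate trick correctly supplying $\mathcal{E}(h\wedge M)\le\mathcal{E}(h)$ and the spectral gap plus $\cpct(B)>0$ finishing the argument. What your approach buys is independence from the probabilistic apparatus and, notably, from Assumption \ref{A:abscont} altogether --- contrary to your closing paragraph, absolute continuity is needed only for the paper's route, not yours. Two caveats. First, you read $\sup_B$ and $\inf_B$ as q.e.-essential bounds of the quasi-continuous version; this is in fact the only reading under which (ii) can hold literally (the Borel function $h$ may be altered arbitrarily on a capacity-null subset of $B$ without changing its $m$-class), so this is a feature, not a bug. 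Second, your ``reduction to the case of closed $B$'' is not automatic: replacing $B$ by $\overline{B}$ only yields the bound with $\esssup_{\overline{B}}\widetilde{h}$, which may strictly exceed $\esssup_{B}\widetilde{h}$, so for genuinely non-closed Borel $B$ your part (ii) is weaker than claimed. Since every use of (ii) in the paper (e.g.\ in the proof of Proposition \ref{P:continuous}, where the relevant set is $\partial W_i\cup\overline{V_i}$) concerns a closed set, this is harmless in context, but you should either restrict (ii) to closed $B$ or supply the missing reduction.
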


Proposition \ref{P:Dirichlet} follows from known potential theoretic results, and we sketch them briefly in a probabilistic way.  
However, we would like to emphasize that probabilistic methods are not substantially used in the present paper. 

The \emph{first hitting time} of a Borel set $B\subset X$ by the Hunt process $Y$ is defined as
\[\sigma_B:=\inf\left\lbrace t>0:Y_t\in B\right\rbrace.\]
A set $N\subset X$ is called \emph{polar} if there exists a Borel set $N_1\supset N$ such that $\mathbb{P}_x(\sigma_{N_1}<\infty)=0$ for all $x\in X$. By Assumption \ref{A:abscont} together with \cite[Theorems 4.1.2 and 4.2.1]{FOT} a set $N\subset X$ is polar if and only if $\cpct(N)=0$. To see Proposition \ref{P:Dirichlet} (i) consider $H_Bf(x):=\mathbb{E}^x\left[e^{-\sigma_B}f(Y_{\sigma_B})\right]$, well-defined for any Borel function $f$. For $u\in\mathcal{F}$ the function $H_B\widetilde{u}$ is a quasi continuous version of $\mathcal{P}_{\mathcal{H}^B}u$ by \cite[Theorem 4.3.1]{FOT}. Next, note that for any such $u$ we have 
\begin{equation}\label{E:cache}
H_B\widetilde{u}=\widetilde{u} \ \text{ q.e. on $B$}.
\end{equation}
For $1$-excessive functions (\ref{E:cache}) follows from \cite[Lemma 4.3.1]{FOT}, in particular (4.3.4). Recall that a Borel function $f$ is said to be \emph{$1$-excessive} if  $f(x)\geq e^{-t}P_tf(x)$ and $f(x)=\lim_{t\to 0}e^{-t}P_tf(x)$ for $m$-a.a. $x\in X$. To verify (\ref{E:cache}) for general $u\in\mathcal{F}$ we may proceed as in the proof of \cite[Theorem 4.3.1]{FOT}: For any bounded Borel function $f$ on $X$ and any $\beta>0$ consider
\[x\mapsto R_\beta f(x):=\mathbb{E}_x\int_0^\infty e^{-\beta t}f(Y_t)dt.\] 
$R_\beta f$ provides a quasi-continuous versions of $G_\beta f$. For bounded $u$ and $\beta>0$, $R_\beta \widetilde{u}$ is the difference of two bounded $1$-excessive functions and therefore $H_B(\beta R_\beta\widetilde{u})=\beta R_\beta\widetilde{u}$ q.e. on $B$. 
By \cite[Lemma 4.2.2 (ii)]{FOT} there exists a set $N_0$ of zero capacity such that for all $x\in N_0^c$ we have $\lim_{\beta\to\infty}\beta R_\beta\widetilde{u}(x)=\widetilde{u}(x)$ and $\lim_{\beta\to\infty}H_B(\beta R_\beta\widetilde{u})(x)=H_B\widetilde{u}(x)$. For nonnegative $u\in\mathcal{F}$ identity (\ref{E:cache}) follows by cutting off and using monotone convergence and for general $u\in\mathcal{F}$ by considering $u\vee 0$ and $-u\vee 0$, both elements of $\mathcal{F}$. Now Proposition \ref{P:Dirichlet} (i) follows because $H_B\widetilde{u}(x)=\mathbb{E}^x[e^{-\sigma_B}(\widetilde{u}\mathbf{1}_{N^c})(Y_{\sigma_B})]$ for any set $N$ that has zero capacity and therefore is polar. What concerns Proposition \ref{P:Dirichlet} (ii), note that \cite[(the easier part of) Proposition 2.5]{BBKT} tells that $h$ is harmonic in the probabilistic sense and therefore we have $h(x)=\mathbb{E}^x[h(Y_{t\wedge \tau_D})]$ for any $t>0$, where $D$ is an arbitrary relatively open subset of $B^c$ and 
\[\tau_D:=\inf\left\lbrace t\geq 0: Y_t\in D^c\right\rbrace\]
the \emph{first exit time} of $Y$ from $D$. If $N$ is a set of zero capacity such that $|h|<c$ on $N^c$, then by the polarity of $N$ we have $h(x)=\mathbb{E}^x[(h\mathbf{1}_{N^c})(Y_{t\wedge \tau_D})]$, and the desired result follows using bounded convergence.

\section{Locally constant functions and Dirichlet subdomains}\label{S:subdomains}

This section is concerned with Dirichlet subdomains $\mathcal{F}^F\subset \mathcal{F}$ that are constructed from functions that are locally constant on the neighborhood of a closed set $F$ of topological dimension zero. In Section \ref{S:locally} these subdomains will be used to obtain constructive descriptions for spaces of locally exact and locally harmonic $1$-forms on $X$. Since $X$ is topologically one-dimensional, every finite open cover has a finite refinement consisting of open sets with topologically zero-dimensional boundary. Recall that a compact subset of $X$ is topologically zero dimensional if and only if it is a totally disconnected set, see e.g. \cite{Eng78}. The next lemma (and therefore also Proposition \ref{P:continuous} below) uses this last fact in an essential way.

\begin{lemma} \label{L:partition}
Let $F\subset X$ be a compact set of topological dimension zero. For any $\delta>0$ there exists a finite collection $\mathcal{K}_\delta=\left\lbrace K_i\right\rbrace_{i=1}^N$ of disjoint compact sets $K_i\subset X$ of diameter less than $\delta$ such that $F=\bigcup_{i=1}^N K_i$.
\end{lemma}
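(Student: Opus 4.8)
The plan is to exploit the standard fact that a compact metric space of topological dimension zero admits a basis of relatively clopen sets. Since $(X,d)$ is a metric space, the compact set $F$ is closed in $X$, so a subset of $F$ is closed in $F$ if and only if it is closed in $X$, and in either case it is then compact. Topological zero-dimensionality of $F$ (equivalently, by \cite{Eng78}, total disconnectedness of the compact set $F$) means precisely that $F$ has a basis consisting of sets that are open and closed in $F$. Thus for each $x\in F$ I would pick a set $U_x$, open and closed in $F$, with $x\in U_x$ and contained in the open $d$-ball of radius $\delta/3$ about $x$; in particular $\diam U_x<\delta$ (the diameter being taken with respect to $d$, which is insensitive to whether $U_x$ is viewed as a subset of $F$ or of $X$).

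First I would invoke compactness of $F$ to extract a finite subcover of $\{U_x\}_{x\in F}$, say $F=U_{x_1}\cup\dots\cup U_{x_n}$. To turn this into a \emph{disjoint} family I would disjointify in the usual way, setting
\[
K_j:=U_{x_j}\setminus\bigcup_{k=1}^{j-1}U_{x_k},\qquad j=1,\dots,n,
\]
then discarding the empty ones and relabelling the remainder as $K_1,\dots,K_N$.

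It then remains to verify the asserted properties, which is routine. Each $K_j$ is a finite Boolean combination of sets open and closed in $F$, hence itself open and closed in $F$; being a closed subset of the compact set $F$, it is compact (and, as noted, compact as a subset of $X$ as well). The family $\{K_j\}$ is pairwise disjoint by construction, its union equals $\bigcup_k U_{x_k}=F$, and $K_j\subset U_{x_j}$ gives $\diam K_j\le\diam U_{x_j}<\delta$. (If $F=\emptyset$ the empty family does the job.)

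I do not expect a serious obstacle here; the only point that genuinely uses the hypothesis is the existence, for each $x\in F$, of arbitrarily small neighborhoods of $x$ that are clopen in $F$, i.e. the clopen-basis characterization of topological zero-dimensionality of the compact set $F$, for which one appeals to standard dimension theory, e.g.\ \cite{Eng78}.
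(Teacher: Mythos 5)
Your proof is correct, but it follows a different (and in fact tighter) route than the paper's. The paper takes a finite $\frac{\delta}{2}$-net $\{x_i\}$ of $F$ and peels off pieces sequentially: $K_1$ is the set of points of $F$ within distance $\frac{\delta}{2}$ of $x_1$, then $K_2$ is carved out of $F\setminus K_1$ around $x_2$, and so on, invoking total disconnectedness to assert that each successive remainder $F\setminus K_1\setminus\dots\setminus K_j$ is again compact. You instead go through the clopen-basis characterization of zero-dimensionality for compact metric spaces: cover $F$ by sets clopen in $F$ of diameter $<\delta$, pass to a finite subcover, and disjointify; compactness of the pieces is then automatic because Boolean combinations of sets clopen in $F$ are clopen in $F$, hence closed in the compact set $F$. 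The two arguments are the same in spirit (small pieces plus disjointification), but yours makes explicit the point the paper leaves implicit --- namely that one must choose the pieces to be \emph{clopen} in $F$, since the complement in $F$ of an arbitrary closed metric ball intersected with $F$ is merely open in $F$ and total disconnectedness alone does not make it closed; the clopen-basis fact is exactly what repairs this. As a minor bonus, your construction yields diameter strictly less than $\delta$ as stated in the lemma, whereas points within $\frac{\delta}{2}$ of a net point are only guaranteed pairwise distance at most $\delta$. No gap in your argument.
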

\begin{proof} There exists a finite $\frac\delta2$-net $\{x_i\}_{i=1}^N$ of points in $F$. Define $K_1$ as the set points of $F$ at the distance at most $\frac\delta2$ from $x_1$. Since $F$ is   
totally disconnected,  $F\setminus K_1$ is compact. Hence we can define $K_2$ as the set points of $F\setminus K_1$ at the distance at most $\frac\delta2$ from $x_2$, and so on. 
\end{proof}

Let $F\subset X$ be closed. A function is said to be \emph{locally constant (q.e.) on an open cover of $F$} if it is constant (q.e.) on any connected component of the union of all sets in the cover. Consider the spaces
\begin{multline}
\mathcal{S}^F:=\left\lbrace f\in \mathcal{F}\cap L_\infty(X,m): \text{ there exists a finite open cover of $F$} \right.\notag\\
\left. \text{such that $\widetilde{f}$ is locally constant q.e. on this cover}\right\rbrace 
\end{multline}
and
\[\mathcal{S}^F_c:=C(X)\cap \mathcal{S}^F.\]

The next proposition is technically most involved result of this section. 

\begin{proposition}\label{P:continuous}
Let $F\subset X$ be a closed set of topological dimension zero.
 Then the space $\mathcal{S}^F$ is dense in $L_2(X,m)$. 
\end{proposition}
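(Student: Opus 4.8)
We propose the following approach. The idea is to approximate an arbitrary $L_2$-function by one that has been ``frozen'' to a constant value on a small neighbourhood of each piece of a suitable decomposition of $F$. First I would reduce to approximating functions in $C(X)\cap\mathcal{F}$: by regularity of $(\mathcal{E},\mathcal{F})$ this space is dense in $\mathcal{F}$ for the norm $\mathcal{E}_1^{1/2}$, and since $\mathcal{F}$ is dense in $L_2(X,m)$ it follows that $C(X)\cap\mathcal{F}$ is dense in $L_2(X,m)$. So fix $g\in C(X)\cap\mathcal{F}$ and $\varepsilon>0$. As $X$ is compact, $g$ is uniformly continuous; let $\omega_g$ denote its modulus of continuity and choose $\delta>0$ with $\omega_g(2\delta)^2\,m(X)<\varepsilon^2$.

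Next I would bring in the topological hypothesis through Lemma~\ref{L:partition}: write $F=\bigcup_{i=1}^N K_i$ with the $K_i$ disjoint compact sets of diameter less than $\delta$, and fix a point $x_i\in K_i$. Set $\rho:=\min_{i\ne j}\dist(K_i,K_j)>0$, $r:=\min(\delta,\rho/3)$, and
\[
U_i:=\{x\in X:\dist(x,K_i)<r\},\qquad V_i:=\{x\in X:\dist(x,K_i)<r/2\}.
\]
Then $K_i\subset V_i\subset\overline{V_i}\subset U_i$ with $\overline{V_i}$ compact, the compact sets $\overline{U_1},\dots,\overline{U_N}$ are pairwise disjoint (if $x\in\overline{U_i}\cap\overline{U_j}$ then $\dist(K_i,K_j)\le 2r<\rho$), and $d(x,x_i)<2\delta$ for every $x\in U_i$. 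Using that $C(X)\cap\mathcal{F}$ is a special standard core (Remark~\ref{R:specialstandard}), I would pick for each $i$ a function $\varphi_i\in C(X)\cap\mathcal{F}$ with $0\le\varphi_i\le1$, $\varphi_i\equiv1$ on $\overline{V_i}$ and $\varphi_i\equiv0$ on $U_i^c$.

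The approximant is then
\[
f:=g-\sum_{i=1}^N\varphi_i\,(g-g(x_i)).
\]
Since $\mathcal{B}=\mathcal{F}\cap L_\infty(X,m)$ is an algebra containing $g$ and all $\varphi_i$, we get $f\in\mathcal{B}\cap C(X)$. On $V_i$ we have $\varphi_i\equiv1$ while $\varphi_j\equiv0$ on $V_i$ for $j\ne i$ (because $\{\varphi_j\ne0\}\subset U_j$ is disjoint from $U_i\supset V_i$), so $f\equiv g(x_i)$ on $V_i$; as the $V_i$ are pairwise disjoint, $f$ is locally constant on the finite open cover $\{V_i\}_{i=1}^N$ of $F$, whence $f\in\mathcal{S}^F$. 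Finally the summands $\varphi_i(g-g(x_i))$ have pairwise disjoint supports contained in the $U_i$, and $|g(x)-g(x_i)|\le\omega_g(2\delta)$ for $x\in U_i$, so
\[
\|g-f\|_{L_2(X,m)}^2=\sum_{i=1}^N\int_{U_i}\varphi_i^2\,(g-g(x_i))^2\,dm\le\omega_g(2\delta)^2\,m\Big(\bigcup_{i=1}^N U_i\Big)\le\omega_g(2\delta)^2\,m(X)<\varepsilon^2,
\]
which gives the density.

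The only genuine obstacle is energy finiteness: naively replacing $g$ by the bare constant $g(x_i)$ on a neighbourhood of $K_i$ would create a jump and leave $\mathcal{F}$. Topological zero-dimensionality of $F$ is exactly what removes this difficulty, since it is what Lemma~\ref{L:partition} needs in order to surround $F$ by \emph{disjoint} small open sets; after that the standard-core cutoffs $\varphi_i$ supply an honest energy-finite interpolation between $g$ and the local constants, while the $L_2$-error stays controlled by the oscillation of $g$ over sets of small diameter. The remaining verifications are routine.
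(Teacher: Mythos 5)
Your proof is correct, but it takes a genuinely different route from the paper's. The paper also reduces to $g\in C(X)\cap\mathcal{F}$ and invokes Lemma~\ref{L:partition} to surround the pieces $K_i$ by disjoint annular neighbourhoods $V_i\subset W_i$, but it then builds the approximant as the projection $\mathcal{P}_{\mathcal{H}^{V\cup W^c}}\varphi$ of a suitably prepared function $\varphi$, i.e.\ it extends the prescribed constants on the $\overline{V_i}$ and the original data on $W^c$ \emph{harmonically} into the gaps $W_i\setminus\overline{V_i}$, and it controls the error there by the maximum principle of Proposition~\ref{P:Dirichlet}~(ii) — machinery that ultimately rests on Assumption~\ref{A:abscont} and the hitting-distribution representation of the projection. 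You instead interpolate explicitly, writing $f=(1-\varphi_i)g+\varphi_i g(x_i)$ on each $U_i$ with standard-core cutoffs $\varphi_i$; this needs only Remark~\ref{R:specialstandard} and the algebra property of $\mathcal{B}$, avoids all potential theory, and controls the $L_2$-error directly through the modulus of continuity of $g$. Your argument is therefore more elementary, and it has the side benefit that the approximant is automatically continuous and $\sup$-close to $g$, which essentially yields the paper's subsequent remark on $\mathcal{S}^F_c$ for free; what the paper's harmonic-extension approach buys in exchange is an approximant whose values are pinched between the local extrema of $f$, which is in the spirit of the projection techniques used throughout Section~\ref{S:subdomains}. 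Two trivial points to tidy up: discard any empty $K_i$ produced by Lemma~\ref{L:partition} before taking $\rho=\min_{i\neq j}\dist(K_i,K_j)$, and note that $d(x,x_i)<r+\delta\le 2\delta$ for $x\in U_i$ uses both the radius $r\le\delta$ and $\diam K_i<\delta$, exactly as you indicate.
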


\begin{proof} 
We first consider $f\in C(X)\cap\mathcal{F}$. $X$ being compact, such a function $f$ is uniformly continuous on $X$. Let $\varepsilon>0$. Choose $\delta>0$ sufficiently small such that 
\begin{equation}\label{E:epsdelta}
|f(p)-f(q)|<\varepsilon/2 \ \ \text{ whenever $d(p,q)<\delta$ for two points $p,q\in X$}.
\end{equation}
Let $\mathcal{K}_\delta=\left\lbrace K_i\right\rbrace_{i=1}^N$ be a finite partition of $F$ into compact sets $K_i$ of diameter less than $\delta$ according to Lemma \ref{L:partition}. If $\varrho>0$ is the minimum distance between two of the sets $K_i$, let $0<\gamma<\delta\wedge\varrho/3$,
\begin{equation}\label{E:Vi}
V_i:=\left\lbrace x\in X: \dist(x,K_i)<\gamma/2\right\rbrace
\end{equation}
and 
\begin{equation}\label{E:Wi}
W_i:=\left\lbrace x\in X: \dist(x,K_i)<\gamma\right\rbrace,
\end{equation}
$i=1,...,N$. Clearly the $W_i$ are disjoint and $\overline{V_i}\subset W_i$. We write $V:=\bigcup_{i=1}^N\overline{V_i}$ and $W:=\bigcup_{i=1}^N W_i$.

Choose nonnegative functions $\varphi_i\in C(X)\cap \mathcal{F}$ compactly supported in $W_i$, respectively, and such that 
\[\varphi_i\equiv \frac{1}{2}(\min_{p\in \overline{V_i}}|f(p)| + \max_{p\in \overline{V_i}}|f(p)|) \ \text{ on $\overline{V_i}$},\]
$i=1,...,N$. Let $\chi\in C(X)\cap\mathcal{F}$ be such that $0\leq \chi\leq 1$, $\chi\equiv 1$ on $W^c$ and $\supp \chi\subset V^c$. Set $\varphi:=\chi f+ \sum_{i=1}^N \varphi_i\in\mathcal{F}$ and $g:=\mathcal{P}_{\mathcal{H}^{V\cup W^c}}\varphi\in\mathcal{F}$, where we use notation from Definition~\ref{def-1}. Then we have $\widetilde{g}=\varphi_i$ q.e. on each $\overline{V_i}$ by Proposition \ref{P:Dirichlet} (i) and therefore $g\in \mathcal{S}^F$. 
Similarly $\widetilde{g}=f$ q.e. on $W^c$. Again by Proposition \ref{P:Dirichlet} (i), the function $\widetilde{g}$ is q.e. bounded. Proposition \ref{P:Dirichlet} (ii) now implies
\begin{multline}
\min_{\overline{W_i}}f\leq \min\left\lbrace \varphi_i|_{\overline{V_i}},\min_{\partial W_i} f\right\rbrace=\min_{q\in\partial W_i\cup \overline{V_i}}\widetilde{g}(q)\leq\widetilde{g}(p)\notag\\
\leq \max_{q\in\partial W_i\cup \overline{V_i}}\widetilde{g}(q)=\max\left\lbrace \varphi_i|_{\overline{V_i}},\max_{\partial W_i}f\right\rbrace\leq \max_{\overline{W_i}}f
\end{multline}
for q.e. $p\in W_i\setminus\overline{V_i}$ and any $i=1,...,N$. By (\ref{E:epsdelta}) and (\ref{E:Wi}), 
\[\max_{\overline{W_i}}f-\min_{\overline{W_i}}f\leq \varepsilon\]
for any $i$ and therefore $|f(p)-\widetilde{g}(p)|\leq \varepsilon$ for q.e. $p\in W\setminus V$, hence for q.e. $p\in X$. Consequently also 
$|f(p)-g(p)|\leq\varepsilon$ for $m$-a.a. $p\in X$. As $m$ is finite this implies the result because $C(X)\cap\mathcal{F}$ is uniformly dense in $C(X)$ and the latter is dense in $L_2(X,m)$. 
\end{proof}

\begin{remark}It is not needed in our paper, but it also can be proved that if $\mathcal{F}\subset C(X)$, then $\mathcal{S}^F_c$ is dense both in $L_2(X,m)$ and in $C(X)$ because $g\in \mathcal{S}^F_c$ such that the preceding estimates hold not just quasi everywhere but everywhere. 
\end{remark}

Now let $\mathcal{F}^F$ denote the $\mathcal{E}_1$-closure of $\mathcal{S}^F$. As $\mathcal{S}^F$ is dense in $L_2(X,m)$, $(\mathcal{E},\mathcal{F}^F)$ is a local Dirichlet form on $L_2(X,m)$. If $\mathcal{F}\subset C(X)$, then $\mathcal{S}^F_c$ is also dense in $L_2(X,m)$, and denoting its $\mathcal{E}_1$-closure by $\mathcal{F}^F_{c}$ we obtain again a local the Dirichlet form $(\mathcal{E},\mathcal{F}^F_c)$, in this case even seen to be regular. In general the inclusion $\mathcal{F}^F\subset\mathcal{F}$ is proper. However, considering two disjoint closed zero-dimensional sets, the entire Dirichlet domain $\mathcal{F}$ can be reconstructed.

\begin{theorem}\label{T:reconstruct}
Let $F_1$ and $F_2$ be compact subsets of $X$ and $F_1\cap F_2=\emptyset$. Then 
\[\mathcal{F}=\overline{\mathcal{F}^{F_1}+\mathcal{F}^{F_2}}.\]
\end{theorem}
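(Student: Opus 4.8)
The inclusion $\mathcal{F}^{F_1}+\mathcal{F}^{F_2}\subseteq\mathcal{F}$ is immediate from the definition of the subdomains, so the content is the reverse: every $f\in\mathcal{F}$ should decompose as $f=f_1+f_2$ with $f_i\in\mathcal{F}^{F_i}$. Since $F_1$ and $F_2$ are disjoint compact sets I would first fix open neighborhoods $W_i\supseteq F_i$ with $\overline{W_1}\cap\overline{W_2}=\emptyset$; using topological one-dimensionality (cf.\ the discussion preceding Lemma~\ref{L:partition}) one may in addition take $\partial W_i$ totally disconnected, though this is not essential for this theorem. The elementary observation underpinning everything is that
\[\mathcal{F}_{(\overline{W_i})^c}=\{g\in\mathcal{F}:\widetilde g=0\ \text{q.e.\ on}\ \overline{W_i}\}\subseteq\mathcal{F}^{F_i}.\]
Indeed, a bounded such $g$ is locally constant (equal to $0$) q.e.\ on the finite open cover $\{W_i\}$ of $F_i$, hence lies in $\mathcal{S}^{F_i}$; and a general such $g$ is the $\mathcal{E}_1$-limit of its truncations $(g\wedge n)\vee(-n)$, which again vanish q.e.\ on $\overline{W_i}$, so $g\in\overline{\mathcal{S}^{F_i}}{}^{\,\mathcal{E}_1}=\mathcal{F}^{F_i}$.

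For $f\in\mathcal{B}=\mathcal{F}\cap L_\infty(X,m)$ the statement is then easy. By the special standard core property (Remark~\ref{R:specialstandard}) choose $\psi\in C(X)\cap\mathcal{F}$ with $0\le\psi\le1$, $\psi\equiv1$ on $W_1$ and $\psi\equiv0$ on $W_2$. Since $\mathcal{B}$ is an algebra, $f_1:=(1-\psi)f$ and $f_2:=\psi f$ lie in $\mathcal{B}$, $f_1+f_2=f$, and $f_1$ vanishes q.e.\ on $W_1$ while $f_2$ vanishes q.e.\ on $W_2$; hence $f_1\in\mathcal{F}^{F_1}$, $f_2\in\mathcal{F}^{F_2}$, and $\mathcal{B}\subseteq\mathcal{F}^{F_1}+\mathcal{F}^{F_2}$.

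For a general $f\in\mathcal{F}$ I would \emph{not} multiply by $\psi$, since it is unclear in general whether products of unbounded $\mathcal{F}$-functions with bounded ones remain in $\mathcal{F}$. Instead I would iterate the $\mathcal{E}_1$-orthogonal decompositions $\mathcal{F}=\mathcal{F}_{(\overline{W_i})^c}\oplus\mathcal{H}^{\overline{W_i}}$ from Definition~\ref{def-1}. Put $g_0:=f$ and, alternately, $g_k:=\mathcal{P}_{\mathcal{H}^{\overline{W_1}}}g_{k-1}$ for $k$ odd, $g_k:=\mathcal{P}_{\mathcal{H}^{\overline{W_2}}}g_{k-1}$ for $k$ even. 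Then $g_{k-1}-g_k\in\mathcal{F}_{(\overline{W_1})^c}\subseteq\mathcal{F}^{F_1}$ for $k$ odd and $g_{k-1}-g_k\in\mathcal{F}_{(\overline{W_2})^c}\subseteq\mathcal{F}^{F_2}$ for $k$ even, and $f=\sum_{j=1}^{2k}(g_{j-1}-g_j)+g_{2k}$ with $g_{2k}=(\mathcal{P}_{\mathcal{H}^{\overline{W_2}}}\mathcal{P}_{\mathcal{H}^{\overline{W_1}}})^{k}f$. A function harmonic simultaneously in $X\setminus\overline{W_1}$ and in $X\setminus\overline{W_2}$ is harmonic on all of $X$ (these two open sets cover $X$ because $\overline{W_1}\cap\overline{W_2}=\emptyset$, using a partition of unity and density of $C(X)\cap\mathcal{F}$ in $\mathcal{F}$), hence constant by the spectral gap; thus $\mathcal{H}^{\overline{W_1}}\cap\mathcal{H}^{\overline{W_2}}$ consists of the constants, and by von Neumann's alternating projection theorem $g_{2k}\to f_X\mathbf{1}_X$ in $\mathcal{E}_1$. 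Collecting the odd- and the even-indexed differences yields partial sums $A_k\in\mathcal{F}^{F_1}$ and $B_k\in\mathcal{F}^{F_2}$ with $A_k+B_k=f-g_{2k}\to f-f_X\mathbf{1}_X$. If $A_k$ and $B_k$ converge (in $\mathcal{E}_1$), their limits $A_\infty\in\mathcal{F}^{F_1}$, $B_\infty\in\mathcal{F}^{F_2}$ satisfy $A_\infty+B_\infty=f-f_X\mathbf{1}_X$, so $f=(A_\infty+f_X\mathbf{1}_X)+B_\infty$ is the desired decomposition ($f_X\mathbf{1}_X$ being a constant, hence in $\mathcal{S}^{F_1}$).

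The main obstacle is exactly this convergence: one must show that the alternating projections converge \emph{geometrically}, i.e.\ that the Friedrichs angle between $\mathcal{H}^{\overline{W_1}}$ and $\mathcal{H}^{\overline{W_2}}$ is positive, equivalently $\|\mathcal{P}_{\mathcal{H}^{\overline{W_2}}}\mathcal{P}_{\mathcal{H}^{\overline{W_1}}}\|<1$ on the $\mathcal{E}_1$-orthogonal complement of the constants; then $\|g_{j-1}-g_j\|_{\mathcal{E}_1}$ decays geometrically and the series defining $A_k,B_k$ converge absolutely. Heuristically the angle is positive because a mean-zero $h\in\mathcal{H}^{\overline{W_1}}$ that is $\mathcal{E}_1$-close to $\mathcal{H}^{\overline{W_2}}$ is harmonic in $X\setminus\overline{W_1}$ and nearly harmonic in $X\setminus\overline{W_2}$, hence nearly harmonic on $X$, hence — by the spectral gap — nearly constant, hence nearly $0$; turning this into a quantitative bound is where the compactness of $X$, Assumption~\ref{A:abscont}, and (through the structure of $\partial W_i$, Lemma~\ref{L:partition}) topological one-dimensionality are genuinely used. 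Equivalently, one may try to show directly that $\mathcal{F}^{F_1}+\mathcal{F}^{F_2}$ is $\mathcal{E}_1$-closed and then invoke $\mathcal{B}\subseteq\mathcal{F}^{F_1}+\mathcal{F}^{F_2}$ together with the density of $\mathcal{B}$ in $\mathcal{F}$; but this again reduces to a positive-angle statement.
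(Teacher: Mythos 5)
Your argument for the density of $\mathcal{F}^{F_1}+\mathcal{F}^{F_2}$ in $\mathcal{F}$ is correct but takes a genuinely different, more constructive route than the paper. The paper argues by duality: after establishing $\mathcal{F}_{F_i^c}\subset\mathcal{F}^{F_i}$ (essentially your first observation, applied to the sets $F_i$ themselves rather than to neighborhoods), it assumes $f\perp\left(\mathcal{F}^{F_1}+\mathcal{F}^{F_2}\right)$, uses the separating cover of Lemma \ref{L:covertwo} and the partition of unity of Lemma \ref{L:partition2} to write an arbitrary $\varphi\in C(X)\cap\mathcal{F}$ as $\varphi\varphi_1+\varphi\varphi_2$ with $\varphi\varphi_1\in C_0(F_2^c)\cap\mathcal{F}$ and $\varphi\varphi_2\in C_0(F_1^c)\cap\mathcal{F}$, and concludes $f\perp C(X)\cap\mathcal{F}$, hence $f=0$ by regularity. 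Your explicit splitting $f=(1-\psi)f+\psi f$ for $f\in\mathcal{B}$ produces an actual decomposition of each bounded function rather than a triviality-of-the-orthogonal-complement statement, at the price of the truncation argument needed to handle unbounded elements of $\mathcal{F}_{(\overline{W_i})^c}$; both arguments prove precisely the same thing, namely that $\mathcal{F}^{F_1}+\mathcal{F}^{F_2}$ is $\mathcal{E}_1$-dense in $\mathcal{F}$.

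Concerning the ``main obstacle'' you isolate: you are right that density of the sum of two closed subspaces does not by itself yield equality, and that closedness of $\mathcal{F}^{F_1}+\mathcal{F}^{F_2}$ amounts to a positive Friedrichs angle. But the paper's own proof does not address this point either --- its orthogonality argument establishes only density, and the passage to the literal equality (and to the direct-sum formula of Remark \ref{R:firstremark}) is left implicit. So measured against the paper you are not missing a step the authors supply; as a proof of the literal statement your proposal is incomplete only by your own candid admission, and you have in fact diagnosed the point more precisely than the source. One small correction to your alternating-projection scheme: $\mathcal{H}^{\overline{W_1}}\cap\mathcal{H}^{\overline{W_2}}=\left\lbrace 0\right\rbrace$ rather than the constants, because membership in $\mathcal{H}^{B}$ means $\mathcal{E}_1$-orthogonality to $\mathcal{F}_{B^c}$ (Definition \ref{def-1}), not mere $\mathcal{E}$-harmonicity, and the two spaces $\mathcal{F}_{(\overline{W_1})^c}$ and $\mathcal{F}_{(\overline{W_2})^c}$ together span an $\mathcal{E}_1$-dense subspace by the same partition-of-unity argument. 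This only simplifies your scheme, since then $g_{2k}\to 0$ and $A_k+B_k\to f$; the unresolved issue of geometric convergence remains exactly as you describe it.
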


The proof of the theorem relies on the following lemmas.

\begin{lemma}\label{L:covertwo}
Let $F_1$ and $F_2$ be disjoint closed subsets of $X$. Then there exists an open cover $\left\lbrace U_1, U_2\right\rbrace$ of $X$ such that $F_1\subset U_1\setminus\overline{U_2}$ and $F_2\subset U_2\setminus \overline{U_1}$.
\end{lemma}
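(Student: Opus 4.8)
\textit{Proof sketch.} The plan is to build $U_1$ and $U_2$ directly from the metric, using compactness only to secure a positive separation between $F_1$ and $F_2$. First I would observe that, $X$ being compact, the closed sets $F_1$ and $F_2$ are compact; being disjoint, they satisfy $\delta:=\dist(F_1,F_2)>0$ (if one of the $F_j$ is empty the statement is trivial, taking that $U_j=\emptyset$ and the other equal to $X$). I would then set
\[
U_1:=\{x\in X:\dist(x,F_2)>\delta/3\},\qquad U_2:=\{x\in X:\dist(x,F_1)>\delta/3\},
\]
both of which are open as preimages of $(\delta/3,\infty)$ under the continuous maps $x\mapsto\dist(x,F_j)$.

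The three required properties are then routine verifications. For the covering property: if $x\in X$ has $\dist(x,F_1)\le\delta/3$, then the triangle inequality gives $\dist(x,F_2)\ge\delta-\delta/3>\delta/3$, so $x\in U_1$; symmetrically $\dist(x,F_2)\le\delta/3$ forces $x\in U_2$; and if neither bound holds then $x\in U_1\cap U_2$. Hence $U_1\cup U_2=X$. For the inclusions, every $x\in F_1$ has $\dist(x,F_2)\ge\delta>\delta/3$, so $F_1\subset U_1$, and likewise $F_2\subset U_2$. Finally, by continuity of $\dist(\cdot,F_1)$ one has $\overline{U_2}\subset\{x\in X:\dist(x,F_1)\ge\delta/3\}$, which is disjoint from $F_1$; symmetrically $\overline{U_1}$ is disjoint from $F_2$. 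Combining, $F_1\subset U_1\setminus\overline{U_2}$ and $F_2\subset U_2\setminus\overline{U_1}$.

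I do not expect any real obstacle here; the only point needing a little care is that one must separate $F_1$ from $F_2$ and at the same time keep $\{U_1,U_2\}$ a cover of $X$ — a bare pair of disjoint open neighbourhoods of the $F_j$ would violate the covering requirement. The constant $\delta/3$ (any value strictly between $0$ and $\delta/2$ serves) supplies precisely the margin reconciling these two demands. If one prefers, an equivalent route is to fix the Urysohn-type function $u(x):=\dist(x,F_1)\big/\big(\dist(x,F_1)+\dist(x,F_2)\big)$ and take $U_1:=\{u<2/3\}$ and $U_2:=\{u>1/3\}$; the checks are identical.
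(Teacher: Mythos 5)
Your proof is correct, and it takes a somewhat different route from the paper's. You work directly with the metric: compactness gives $\delta:=\dist(F_1,F_2)>0$, and the sets $\{\dist(\cdot,F_2)>\delta/3\}$ and $\{\dist(\cdot,F_1)>\delta/3\}$ do exactly what is required; your verifications of the covering property, the inclusions, and the disjointness from the closures are all sound (and your handling of the degenerate case where one $F_j$ is empty is fine). The paper instead argues purely topologically, invoking normality twice: first to separate $F_1$ from $F_2$ by disjoint open sets $V_1\supset F_1$, $V_2\supset F_2$, and then to separate the disjoint closed sets $V_1^c$ and $F_1$ by disjoint open sets $W_1\supset V_1^c$, $W_2\supset F_1$; the cover is $\{V_1,W_1\}$. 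The paper's argument would work verbatim in any normal space and needs neither the metric nor compactness; your quantitative argument is more concrete and self-contained in the metric setting, and your second variant via the Urysohn-type function $u=\dist(\cdot,F_1)/(\dist(\cdot,F_1)+\dist(\cdot,F_2))$ even dispenses with compactness, since that quotient is well defined for any pair of disjoint closed sets in a metric space. Either way the lemma holds; nothing is missing.
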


\begin{proof}
Let $V_1$ and $V_2$ be disjoint open sets containing $F_1$ and $F_2$, respectively. $V_1^c$ and $F_1$ are disjoint, too, hence there exist disjoint open neighborhoods $W_1$ of $V_1^c$ and $W_2$ of $F_1$. Obviously $\left\lbrace V_1,W_1\right\rbrace$ covers $X$. Since $\overline{W_1}\cap W_2=\emptyset$, we have $F_1\subset V_1\setminus \overline{W_1}$. Since $\overline{V_1}\cap V_2=\emptyset$, $F_2\subset W_1\setminus \overline{V_1}$.
\end{proof}

To any finite open cover we can associate an energy finite partition of unity:

\begin{lemma}\label{L:partition2}
For any finite open cover $U_1,...,U_N$ of $X$ there exist functions $\varphi_i\in C(X)\cap \mathcal{F}$, $i=1,...,N$, such that $0\leq\varphi_i\leq 1$, $\supp\varphi_i\subset U_i$ and $\sum_{i=1}^N\varphi_i(x)=1$.
\end{lemma}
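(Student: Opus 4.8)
The statement to be proved is Lemma~\ref{L:partition2}: to any finite open cover $U_1,\dots,U_N$ of the compact space $X$ one can associate an energy-finite continuous partition of unity $\varphi_i$ subordinate to it. This is a soft combination of a shrinking-cover argument (pure topology) with the special standard core property of $(\mathcal{E},\mathcal{F})$ recorded in Remark~\ref{R:specialstandard}. The plan is to first shrink the cover, then produce bump functions from the core on each shrunk piece, and finally normalize.

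\textbf{Step 1: shrink the cover.} First I would invoke the standard shrinking lemma for finite open covers of a normal (indeed compact metric) space: there is an open cover $V_1,\dots,V_N$ of $X$ with $\overline{V_i}\subset U_i$ for each $i$. This is proved by the usual induction on $i$, replacing $U_i$ at step $i$ by an open set $V_i$ with $\overline{V_i}\subset U_i$ whose union with the (already-shrunk) $V_1,\dots,V_{i-1}$ and the (not-yet-shrunk) $U_{i+1},\dots,U_N$ still covers $X$, using normality of $X$ to separate the closed set $X\setminus(V_1\cup\cdots\cup V_{i-1}\cup U_{i+1}\cup\cdots\cup U_N)$ from $X\setminus U_i$. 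One may iterate the shrinking once more to obtain open sets $W_i$ with $\overline{W_i}\subset V_i$ and the $W_i$ still covering $X$; this gives the nested inclusions $\overline{W_i}\subset V_i\subset\overline{V_i}\subset U_i$ needed below.

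\textbf{Step 2: produce core bump functions.} For each $i$, apply the special standard core property from Remark~\ref{R:specialstandard} to the compact set $K:=\overline{W_i}$ and the open set $U:=V_i$ (so $K\subset U$): this yields $\psi_i\in C(X)\cap\mathcal{F}$ with $0\le\psi_i\le1$, $\psi_i\equiv1$ on $\overline{W_i}$ and $\psi_i\equiv0$ on $V_i^c$; in particular $\supp\psi_i\subset\overline{V_i}\subset U_i$, so each $\psi_i$ is compactly supported in $U_i$. Since the $W_i$ cover $X$ and $\psi_i\equiv1$ on $\overline{W_i}$, the sum $\Psi:=\sum_{i=1}^N\psi_i$ satisfies $\Psi(x)\ge1$ for every $x\in X$; also $\Psi\in C(X)\cap\mathcal{F}$ as a finite sum, and $\Psi$ is bounded away from $0$ by $1$ and bounded above by $N$.

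\textbf{Step 3: normalize.} Set $\varphi_i:=\psi_i/\Psi$. Because $t\mapsto 1/t$ is smooth (hence Lipschitz with bounded derivative) on the interval $[1,N]$ containing the range of $\Psi$, the function $1/\Psi$ lies in $C(X)\cap\mathcal{F}$ (composition of a bounded energy-finite function with a Lipschitz function on a set containing its range stays in $\mathcal{F}\cap L_\infty$, and $\mathcal{B}=\mathcal{F}\cap L_\infty$ is an algebra, so the product $\psi_i\cdot(1/\Psi)$ is again in $C(X)\cap\mathcal{F}$). Then $0\le\varphi_i\le1$, $\supp\varphi_i=\supp\psi_i\subset U_i$, and $\sum_{i=1}^N\varphi_i\equiv1$ on $X$ by construction. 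This completes the proof.

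\textbf{Main obstacle.} There is no deep obstacle here — the topology (Steps 1) is classical and the Dirichlet-form input (Step 2) is exactly Remark~\ref{R:specialstandard}. The only point requiring a little care is Step 3: verifying that dividing by $\Psi$ preserves membership in $C(X)\cap\mathcal{F}$, i.e. that $\mathcal{F}\cap L_\infty$ is stable under composition with smooth (Lipschitz) functions on an interval bounded away from the singularity of $t\mapsto1/t$. This is standard — it follows from the chain rule for energy measures (equivalently, from the fact that normal contractions and, more generally, Lipschitz maps operate on $\mathcal{F}$, together with the algebra property of $\mathcal{B}$) — but it is the one step where one must actually use properties of the Dirichlet form rather than pure point-set topology.
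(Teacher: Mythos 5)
Your proposal is correct and follows essentially the same route as the paper: shrink the cover using normality, produce bump functions $\psi_i$ from the special standard core of Remark~\ref{R:specialstandard}, and normalize by $\varphi_i=\psi_i\bigl(\sum_j\psi_j\bigr)^{-1}$. The only differences are cosmetic — you shrink the cover twice where one shrinking suffices (the paper applies the core property directly to $K=\overline{V_i}\subset U_i$), and you spell out the justification that $1/\Psi\in C(X)\cap\mathcal{F}$, which the paper leaves implicit.
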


\begin{proof}
As $X$ is a normal space, we can find open sets $V_1,...,V_N$ such that $\overline{V}_i\subset U_i$ for all $i$ and still $X=\bigcup_{i=1}^N V_i$, see e.g. \cite[Proposition B.1]{Lee}. For any $i$ let $\psi_i\in C(X)\cap \mathcal{F}$ be a function according to Remark \ref{R:specialstandard} such that $\supp\psi_i\subset U_i$, $0\leq \psi_i\leq 1$ and $\psi_i\equiv 1$ on $\overline{V_i}$. Now set $\varphi_i=\psi_i(\sum_{i=1}^N\psi_i)^{-1}$.
\end{proof}

Now Theorem \ref{T:reconstruct} follows from the preceding two lemmas together with 
\cite[Theorem 1.4.2 (iii)]{FOT}.

\section{Locally exact and locally harmonic $1$-forms}\label{S:locally}

This section first recalls the definition of $1$-forms and derivations based on Dirichlet forms as proposed by Cipriani and Sauvageot, \cite{CS03,CS09}, and then turns to related notions of local exactness and harmonicity.
 
In \cite{CS03} and \cite{CS09} the following construction of differential $1$-forms has been developed. It may be considered for any symmetric local regular Dirichlet form on a locally compact second countable Hausdorff space. Endow the vector space $\mathcal{B}\otimes\mathcal{B}_b(X)$ of simple tensors with the symmetric bilinear form
\begin{equation}\label{E:CSnorm}
\left\langle a\otimes b, c\otimes d\right\rangle_\mathcal{H}=\int_X bd\:d\Gamma(a,c),
\end{equation}
$a\otimes b, c\otimes d\in\mathcal{B}\otimes\mathcal{B}_b(X)$. Let $\left\|\cdot\right\|_\mathcal{H}$ denote the associated norm and 
\begin{equation}\label{E:kernel}
ker\:\left\|\cdot\right\|_\mathcal{H}:=\left\lbrace \sum_i a_i\otimes b_i\in\mathcal{B}\otimes\mathcal{B}_b(X): \left\|\sum_i a_i\otimes b_i\right\|_\mathcal{H}=0\right\rbrace 
\end{equation} 
(with finite sums). We write $\mathcal{H}$ for the completion of $\mathcal{B}\otimes\mathcal{B}_b(X)/ker\:\left\|\cdot\right\|_\mathcal{H}$ with respect to $\left\|\cdot\right\|_\mathcal{H}$. Obviously $\mathcal{H}$ is a Hilbert space. We refer to it as the \emph{space of $1$-forms} on $X$. The definitions 
\[c(a\otimes b):=(ac)\otimes b - c\otimes(ab)\]
and 
\[(a\otimes b)d:=a\otimes (bd)\]
for $a\otimes b\in\mathcal{B}\otimes\mathcal{B}_b(X)$, $c\in\mathcal{B}$ and $d\in\mathcal{B}_b(X)$ extend continuously to uniformly bounded actions on $\mathcal{H}$ with
\begin{equation}\label{E:boundedactions}
\left\| c(a\otimes b)\right\|_\mathcal{H}\leq \sup_X|c|\left\|a\otimes b\right\|_\mathcal{H}\ \text{ and } \ \left\| (a\otimes b)d\right\|_\mathcal{H}\leq \sup_X|d|\left\|a\otimes b\right\|_\mathcal{H} ,
\end{equation}
turning $\mathcal{H}$ into a bimodule. Using the locality of $(\mathcal{E},\mathcal{F})$ it can be shown that the left and right action coincide, see for instance \cite{H11} or \cite{IRT}.

A \emph{derivation operator} $\partial: \mathcal{B}\to \mathcal{H}$ can be defined by setting
\[\partial f:= f\otimes \mathbf{1}.\]
It satisfies the Leibniz rule,
\begin{equation}\label{E:Leibniz}
\partial(fg)=f\partial g + g\partial f, \ \ f,g \in \mathcal{B},
\end{equation}
and is a bounded linear operator satisfying 
\begin{equation}\label{E:normandenergy}
\left\|\partial f\right\|_\mathcal{H}^2=\mathcal{E}(f), \ \ f\in\mathcal{B}.
\end{equation}
On Euclidean domains and on smooth manifolds the operator $\partial$ coincides with the classical exterior derivative (in $L_2$-sense). For more detailed information we refer the reader to \cite{CS03, CS09} and to the papers \cite{CGIS11,H11,HKT2012,HKT2012,
HRT,IRT}, where this approach has been taken further in various respects.

Similarly as in \cite[Section 2]{HRT} we can extend the measure-valued bilinear mapping $\Gamma$ on $\mathcal{B}$ defined in (\ref{E:energymeasure})
to a functional-valued bilinear mapping $\Gamma_\mathcal{H}$ on $\mathcal{H}$. Setting 
\begin{equation}\label{E:GammaH}
\Gamma_\mathcal{H}(a\otimes b, c\otimes d):=bd\Gamma(a,c)
\end{equation}
for simple tensors $a\otimes b, c\otimes d\in\mathcal{B}\otimes\mathcal{B}_b(X)$, we obtain a bilinear measure-valued map $\Gamma_\mathcal{H}$. Given a general element $u\in\mathcal{H}$ we may approximate it in $\mathcal{H}$ by a sequence $(u_k)_k \subset \mathcal{B}\otimes\mathcal{B}_b(X)$ of finite linear combinations of simple tensors and set 
\begin{equation}\label{E:positive}
\Gamma_{\mathcal{H}}(u)(\varphi):=\lim_k\int_X\varphi\:d\Gamma_{\mathcal{H}}(u_k).
\end{equation}
for any $\varphi\in \mathcal{B}_b(X)$. From (\ref{E:CSnorm}) we easily obtain 
\begin{equation}\label{E:Gammabound}
|\Gamma_{\mathcal{H}}(u)(\varphi)|\leq \sup_x|\varphi(x)|\left\|u\right\|_{\mathcal{H}}^2,
\end{equation}
what defines a positive and bounded linear functional $\Gamma_\mathcal{H}(u)$ on $\mathcal{B}_b(X)$. For fixed $u\in\mathcal{H}$ we may extend $\Gamma_\mathcal{H}(u)$ to a generally unbounded bilinear functional on $L_2(X,m)$ by a simple approximation. By polarization $\Gamma_\mathcal{H}$ itself defines bilinear mapping on $\mathcal{H}$.
\begin{remark}
Let $u\in\mathcal{H}$ be fixed. As $C(X)\subset \mathcal{B}_b(X)$, the Riesz representation theorem ensures the existence of a unique nonnegative Radon measure $\Gamma_\mathcal{H}(\omega)$ on $X$ such that for any 
\[\Gamma_{\mathcal{H}}(u)(\varphi):=\int_X\varphi\:d\Gamma_{\mathcal{H}}(u), \ \ \varphi\in C(X).\]
\end{remark}
Given a $1$-form $\omega\in\mathcal{H}$, we refer to the support of the measure $\Gamma_\mathcal{H}(\omega)$ as the \emph{support of $\omega$}, cf. \cite[Section 2]{HRT}.
\begin{lemma}\label{L:support}
If $f\in\mathcal{B}$ and $U\subset X$ open are such that $\supp f\subset U$ then also $\supp\partial f\subset U$.
\end{lemma}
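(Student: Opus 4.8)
The plan is to show that if $\varphi \in C(X)$ has $\supp \varphi \cap U = \emptyset$, then $\int_X \varphi \, d\Gamma_{\mathcal{H}}(\partial f) = 0$, which forces $\supp \partial f \subset U$ since the support of the Radon measure $\Gamma_{\mathcal{H}}(\partial f)$ is exactly the complement of the largest open set on which it vanishes. By the defining relation \eqref{E:GammaH} applied to the simple tensor $\partial f = f \otimes \mathbf{1}$, we have $\Gamma_{\mathcal{H}}(\partial f) = \Gamma(f)$ as measures, so the claim reduces to showing $\supp \Gamma(f) \subset \supp f \subset U$. Thus the real content is the corresponding statement at the level of energy measures: the energy measure $\Gamma(f)$ of a bounded energy-finite function $f$ is supported in the closure of the set where $f$ does not vanish.

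The key step is then to verify that $\Gamma(f)(V) = 0$ for every open set $V$ with $V \cap \supp f = \emptyset$, equivalently $f \equiv 0$ on $V$ (up to $m$-null, but since $\supp f$ is closed, $f$ vanishes identically on the open set $V$). First I would pick an arbitrary $\varphi \in C(X) \cap \mathcal{F}$ with $\supp \varphi \subset V$; it suffices to show $\int_X \varphi \, d\Gamma(f) = 0$ for all such $\varphi$, since these are dense enough (via regularity and the special standard core, Remark~\ref{R:specialstandard}) to test the restriction of $\Gamma(f)$ to $V$. Using the defining identity \eqref{E:energymeasure} for the mutual energy measure, with $g = f$, we get
\[
2\int_X \varphi \, d\Gamma(f) = 2\mathcal{E}(\varphi f, f) - \mathcal{E}(f^2, \varphi).
\]
Now because $f \equiv 0$ on the open neighborhood $V \supset \supp \varphi$, we have $\varphi f = 0$ and $f^2 \varphi = 0$ identically as elements of $\mathcal{B}$, so both terms on the right vanish and hence $\int_X \varphi \, d\Gamma(f) = 0$.

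Assembling: for arbitrary $\varphi \in C(X)$ supported in $V := U^c{}^{\circ} \supset (\supp f)^c$—more carefully, for any open $V$ disjoint from $\supp f$—the argument above (first for $\varphi \in C(X) \cap \mathcal{F}$ supported in $V$, then extending to all continuous $\varphi$ supported in $V$ by uniform approximation using that $C(X)\cap\mathcal{F}$ is uniformly dense in $C(X)$ together with the bound $|\Gamma(f)(\varphi)| \le \|\varphi\|_\infty \,\mathcal{E}(f)$) shows $\Gamma(f)$ vanishes on $V$. Taking $V$ to be the union of all open sets disjoint from $\supp f$, i.e. $V = (\supp f)^c$, we conclude $\supp \Gamma(f) \subset \supp f \subset U$, and therefore $\supp \partial f \subset U$ as well. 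The only mildly delicate point—the main obstacle, such as it is—is the passage from testing against $\varphi \in C(X)\cap\mathcal{F}$ to concluding that the measure $\Gamma(f)$ genuinely has no mass on $V$; this is handled by the standard density and the continuity estimate just mentioned, and is entirely routine. Everything else is an immediate application of the Leibniz-type identity \eqref{E:energymeasure} and the locality already built into $\mathcal{B}$ being an algebra.
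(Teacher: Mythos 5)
Your proposal follows essentially the same route as the paper: identify $\Gamma_{\mathcal{H}}(\partial f)$ with $\Gamma(f)$, test the measure on an open set $V\subset(\supp f)^c$ against functions $\varphi\in C(X)\cap\mathcal{F}$ supported in $V$, apply the identity \eqref{E:energymeasure} with $g=f$, and pass from the test functions to $\Gamma(f)(V)=0$ by approximation (the paper uses pointwise approximation of $\mathbf{1}_V$ plus Fatou; your inner-regularity/density argument is equivalent). One justification needs repair, though: you claim both terms on the right of $2\int_X\varphi\,d\Gamma(f)=2\mathcal{E}(\varphi f,f)-\mathcal{E}(f^2,\varphi)$ vanish ``because $\varphi f=0$ and $f^2\varphi=0$.'' That reasoning is valid for the first term, whose first argument is literally the zero element of $\mathcal{F}$, but the vanishing of the product $f^2\varphi$ says nothing about the bilinear quantity $\mathcal{E}(f^2,\varphi)$; what kills that term is the (strong) locality of $(\mathcal{E},\mathcal{F})$, since $f^2$ vanishes on the open neighborhood $(\supp f)^c$ of $\supp\varphi$. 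This is exactly the ingredient the paper cites, and it is a standing hypothesis, so the gap is easily filled --- but note that locality is a property of the form, not a consequence of $\mathcal{B}$ being an algebra, as your closing sentence suggests.
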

\begin{proof}
By definition $\Gamma_\mathcal{H}(\partial f)=\Gamma(f)$. For arbitrary $V\subset X$ open, $\mathbf{1}_V$ may be approximated pointwise $m$-a.e. by a sequence $(\varphi_n)_n \subset C(X)\cap \mathcal{F}$ of functions $\varphi_n$ which are zero on $V^c$. If $V\subset U^c\subset (\supp f)^c$ then also $\supp\varphi_n\subset \overline{V}\subset U^c$ and therefore
\[\Gamma(f)(V)\leq \liminf_n\int_X\varphi_n d\Gamma(f)=\liminf_n\left(2\mathcal{E}(\varphi_n f,\varphi_n)-\mathcal{E}(f^2,\varphi_n)\right)=0\]
by Fatou's lemma and the locality of $(\mathcal{E},\mathcal{F})$.
\end{proof}
Lemma \ref{L:support} will be used in this section. Another application of $\Gamma_\mathcal{H}$ will be seen in Section~\ref{S:NS}.

Recall that in the present paper we have assumed $X$ to be a compact, connected and topologically one-dimensional space. From now on we work under the following additional assumption.

\setcounter{section}{4}
{\begin{assumption}\label{A:assumption}
There is a topological base $\mathcal{O}$, stable under taking finite unions of sets, such that 
for any $V\in\mathcal{O}$ and any $f\in\mathcal{F}$ there is a sequence of functions $(f_n)_n\subset \mathcal{S}^{\partial V}$ such that $\lim_n\mathcal{E}(f-f_n)=0$.
\end{assumption}}
All subsequent statements this section and the following sections are stated conditionally on this assumption.

We consider the space 
\begin{equation}\label{E:Sloc}
\mathcal{S}_{loc}:=\lin\left\lbrace \partial f\mathbf{1}_V: \text{ $V\subset X$ open, $\partial V$ zero-dimensional and $f\in\mathcal{S}^{\partial V}$}\right\rbrace.
\end{equation}

\begin{theorem}\label{T:H}
The space $\mathcal{S}_{loc}$ is dense in $\mathcal{H}$.
\end{theorem}

The theorem is a consequence of the following two lemmas.

\begin{lemma}\label{L:simple}
Let $U\subset X$ be open and $f\in\mathcal{B}$. Then $f\otimes\mathbf{1}_U\in\clos(\mathcal{S}_{loc})$. 
\end{lemma}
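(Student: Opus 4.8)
The statement to establish is Lemma \ref{L:simple}: for any open $U\subset X$ and any $f\in\mathcal{B}$, the simple tensor $f\otimes\mathbf{1}_U$ lies in the subspace $\widetilde{\mathcal{H}}$. The guiding idea is that an arbitrary open set can be trapped between an open set whose boundary is zero-dimensional and approximated well enough, using the topological one-dimensionality of $X$ to produce refinements with zero-dimensional boundaries. So the plan is: first reduce to the case where $\partial U$ is zero-dimensional, and then, for such $U$, identify $f\otimes\mathbf{1}_U$ with one of the generating tensors $g_1\otimes\mathbf{1}_{V_1}+g_2\otimes\mathbf{1}_{V_2}$ appearing in the definition of $\widetilde{\mathcal{H}}$ (or a limit of such), invoking Theorem \ref{T:reconstruct} to split $f$ appropriately.

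\textbf{Step 1: approximating $\mathbf{1}_U$ in the relevant topology.} The norm $\|f\otimes(\mathbf{1}_U-\mathbf{1}_{U'})\|_\mathcal{H}^2=\int_X(\mathbf{1}_U-\mathbf{1}_{U'})^2\,d\Gamma(f)$ only sees the symmetric difference of $U$ and $U'$ through the finite measure $\Gamma(f)$. Since $X$ is topologically one-dimensional and compact, every open set can be written as an increasing union (or approximated from inside and outside) by open sets with zero-dimensional (totally disconnected) boundary — this is the standard fact used throughout Section \ref{S:subdomains}, e.g. by taking suitable sublevel sets of continuous functions or refining covers. Choosing such a $U'$ with $\Gamma(f)(U\triangle U')$ small, we reduce the proof to the case $\partial U$ zero-dimensional, provided $\widetilde{\mathcal{H}}$ is closed (which it is, by construction).

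\textbf{Step 2: realizing $f\otimes\mathbf{1}_U$ inside $\widetilde{\mathcal{H}}$ for $U$ with zero-dimensional boundary.} Now take a second closed zero-dimensional set $F_2$ disjoint from $F_1:=\partial U$ — for instance, pick any point (or small totally disconnected set) in the interior of $U$, or more robustly apply Lemma \ref{L:partition}/Lemma \ref{L:covertwo} to produce a disjoint companion. By Theorem \ref{T:reconstruct} we have $\mathcal{F}=\mathcal{F}^{F_1}+\mathcal{F}^{F_2}$, and by Remark \ref{R:firstremark} we may write $f=g_1+g_2$ with $g_1\in\mathcal{F}^{\partial U}$ and $g_2\in\mathcal{F}^{F_2}\cap(\mathcal{F}^{\partial U})^\bot$. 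A slight care is needed because the generators of $\widetilde{\mathcal{H}}$ require $g_1,g_2\in\mathcal{B}$, i.e.\ bounded, whereas Theorem \ref{T:reconstruct} only gives membership in $\mathcal{F}$; one handles this by noting $\mathcal{S}^{F}$ consists of bounded functions and taking $\mathcal{E}_1$-approximations, or by a truncation argument, then passing to the limit in $\mathcal{H}$ using $\|h\otimes\mathbf{1}_U\|_\mathcal{H}^2\le\mathcal{E}(h)$. Then $f\otimes\mathbf{1}_U=g_1\otimes\mathbf{1}_U+g_2\otimes\mathbf{1}_U$. To match the template in the definition of $\widetilde{\mathcal{H}}$, set $V_1:=U$ (so $\partial V_1=\partial U$ is zero-dimensional) and choose $V_2$ to be an open set with $\partial V_2\subset F_2$ zero-dimensional, $\partial V_1\cap\partial V_2=\emptyset$, and $\mathbf{1}_{V_2}=\mathbf{1}_U$ wherever it matters against $\Gamma(g_2)$ — concretely, one may take $V_2$ with $\Gamma(g_2)(V_2\triangle U)=0$ or handle the discrepancy by a further small correction as in Step 1. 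This exhibits $f\otimes\mathbf{1}_U$ as a generator (or a limit of generators) of $\widetilde{\mathcal{H}}$.

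\textbf{Main obstacle.} The delicate point is the bookkeeping in Step 2: the definition of $\widetilde{\mathcal{H}}$ ties the \emph{same} open sets $V_i$ to both the indicator $\mathbf{1}_{V_i}$ and the subdomain $\mathcal{F}^{\partial V_i}$, and it demands $g_i\in\mathcal{B}$ with the orthogonality $g_2\in(\mathcal{F}^{\partial V_1})^\bot$. Reconciling "the splitting of $f$ dictated by Theorem \ref{T:reconstruct}" with "the single open set $U$ that must serve as $V_1$" — while keeping boundedness and the zero-dimensional boundary conditions — is where the real work lies, and it is why the argument is split into an approximation step (absorbing all the error terms in $\Gamma(f)$- and $\mathcal{E}$-norm) followed by an exact algebraic identification. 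Everything else (closedness of $\widetilde{\mathcal{H}}$, the norm estimates, the existence of zero-dimensional refinements) is routine given the earlier sections.
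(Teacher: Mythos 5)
Your overall strategy --- approximate $U$ by open sets with zero-dimensional boundary and split $f$ via Theorem \ref{T:reconstruct} relative to two disjoint such boundaries --- is the same as the paper's, but your Step 2 has a genuine gap at exactly the point you yourself flag as ``the real work''. The generator template of $\widetilde{\mathcal{H}}$ forces $\partial V_1\cap\partial V_2=\emptyset$, so you cannot take $V_1=V_2=U$; whichever set you perturb, say to $V_2$, the resulting error is $\left\|g_2\otimes(\mathbf{1}_U-\mathbf{1}_{V_2})\right\|_\mathcal{H}^2=\Gamma(g_2)(U\triangle V_2)$, and here $g_2$ is the component of $f$ in the decomposition determined by $\partial V_2$ --- it changes every time you change $V_2$. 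The uniform bound $\Gamma(g_2)(X)=\mathcal{E}(g_2)\leq\mathcal{E}(f)$ (from the orthogonality in Theorem \ref{T:reconstruct}) does not prevent the measures $\Gamma(g_2)$ from concentrating on the shrinking collar $U\triangle V_2$, so neither of your proposed fixes works: a $V_2$ with $\Gamma(g_2)(V_2\triangle U)=0$ need not exist (you cannot choose $V_2$ after seeing $g_2$, since $g_2$ is defined only once $V_2$, hence $\partial V_2$, is fixed), and ``a further small correction as in Step 1'' is unavailable because Step 1's correction used the single fixed measure $\Gamma(f)$, whereas here the measure varies with the approximating set.

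The paper resolves this with an idea absent from your proposal: it gives up on norm convergence of the approximants and proves only \emph{weak} convergence. Testing $f\otimes\mathbf{1}_U-(f_{2k}\otimes\mathbf{1}_{V^{2k}}+f_{2k+1}\otimes\mathbf{1}_{V^{2k+1}})$ against a fixed simple tensor $a\otimes b$, the troublesome term becomes $\int_{V^{2k+1}\setminus V^{2k}}b\,d\Gamma(f_{2k+1},a)$, which by Cauchy--Schwarz is at most $\sup_X|b|\,\mathcal{E}(f)^{1/2}\Gamma(a)(V^{2k+1}\setminus V^{2k})^{1/2}$; the varying function now enters only through its total energy, and the shrinking set is measured by the \emph{fixed} measure $\Gamma(a)$, so the bound tends to zero. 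Since simple tensors are dense in $\mathcal{H}$, this gives weak convergence, and the Banach--Saks theorem upgrades Cesaro means of the approximants to strong convergence, which suffices because $\widetilde{\mathcal{H}}$ is closed. Without this (or some equivalent device) your argument does not close. Two smaller remarks: your Step 1 is harmless but does not reduce the difficulty, since the obstruction above is already present when $\partial U$ is zero-dimensional; and your worry about boundedness of $g_1,g_2$ is moot, as the definition of $\widetilde{\mathcal{H}}$ only requires $g_i\in\mathcal{F}^{\partial V_i}$.
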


\begin{proof}
As $X$ is second countable, we may assume the base $\mathcal{O}$ in Assumption \ref{A:assumption} is countable. Consequently $U=\bigcup_{i=1}^\infty V_i$ with certain $V_i\in\mathcal{O}$. Let $\varepsilon>0$ be given. Setting $U_N:=\bigcup_{i=1}^N V_i$ we have 
\[\left\|f\otimes \mathbf{1}_U-f\otimes\mathbf{1}_{U_N}\right\|_{\mathcal{H}}=\Gamma(f)(U\setminus U_N)^{1/2}<\frac{\varepsilon}{2},\]
provided $N$ is sufficiently large. On the other hand Assumption \ref{A:assumption} ensures that $U_N\in\mathcal{O}$ for any $N$ and there exist functions $f^{(N)}\in\mathcal{S}^{\partial U_N}$ with 
\[\left\|f\otimes\mathbf{1}_{U_N}-f^{(N)}\otimes \mathbf{1}_{U_N}\right\|_{\mathcal{H}}\leq    \mathcal{E}(f-f^{(N)})^{1/2}<\frac{\varepsilon}{2}.\]
Consequently, we have 
\[\left\|f\otimes\mathbf{1}_{U}-f^{(N)}\otimes\mathbf{1}_{U_N}\right\|_{\mathcal{H}}<\varepsilon\]
for any large enough $N$.
\end{proof}

\begin{lemma}\label{L:H11} We have
\[\mathcal{H}=\clos\lin\left\lbrace f\otimes \mathbf{1}_U: \text{ $U\subset X$ open and $f\in\mathcal{B}$}\right\rbrace.\]
\end{lemma}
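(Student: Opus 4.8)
The plan is to show that the closed linear span $\mathcal{G}$ of all $1$-forms $f\otimes\mathbf{1}_U$ with $f\in\mathcal{B}$ and $U\subset X$ open is all of $\mathcal{H}$. Since by construction $\mathcal{H}$ is the closed linear span of all simple tensors $a\otimes b$ with $a\in\mathcal{B}$ and $b\in\mathcal{B}_b(X)$, it suffices to approximate an arbitrary simple tensor $a\otimes b$, and then by continuity of the left/right module actions (see \eqref{E:boundedactions}) and linearity it is enough to handle the case $a\otimes b$ with $b$ a bounded Borel function. The first reduction is to pass from a general bounded Borel $b$ to indicators of Borel sets: writing $b$ as a uniform limit of simple functions $\sum_j c_j\mathbf{1}_{B_j}$ and using \eqref{E:boundedactions} to control $\left\|a\otimes(b-b_n)\right\|_\mathcal{H}\le\sup_X|b-b_n|\,\mathcal{E}(a)^{1/2}$, we reduce to approximating $a\otimes\mathbf{1}_B$ for $B$ Borel.

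Next I would reduce from a general Borel set $B$ to an open set. Here the natural tool is regularity of the energy measure $\Gamma(a)$: it is a finite Radon measure on the compact metric space $X$, hence outer regular, so there is a decreasing sequence of open sets $U_n\supset B$ with $\Gamma(a)(U_n\setminus B)\to 0$. Then
\[
\left\|a\otimes\mathbf{1}_B-a\otimes\mathbf{1}_{U_n}\right\|_\mathcal{H}^2=\int_X(\mathbf{1}_B-\mathbf{1}_{U_n})^2\,d\Gamma(a)=\Gamma(a)(U_n\setminus B)\longrightarrow 0,
\]
using the defining norm \eqref{E:CSnorm} with the bilinear form applied to the same element $a$. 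This shows $a\otimes\mathbf{1}_B\in\clos\lin\{a\otimes\mathbf{1}_U:U\text{ open}\}$, and since elements of the latter type already lie in $\mathcal{G}$, we conclude $a\otimes\mathbf{1}_B\in\mathcal{G}$ for every Borel $B$, hence $a\otimes b\in\mathcal{G}$ for every bounded Borel $b$, hence $\mathcal{G}=\mathcal{H}$.

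I expect the only genuine subtlety is bookkeeping: making sure that in the approximation $b\mapsto\sum_j c_j\mathbf{1}_{B_j}$ the sets $B_j$ may be taken Borel (immediate, since $b$ is assumed Borel), and that the estimate via \eqref{E:CSnorm} is applied to a fixed first slot $a$ so that it genuinely reduces to controlling $\Gamma(a)$ of a small set — there is no interaction between different tensors to worry about because we treat one simple tensor at a time before taking linear combinations and closures. No deep input is needed here; in particular the topological one-dimensionality of $X$ is not used in this lemma (it enters only in Lemma \ref{L:simple}, which together with this lemma yields Theorem \ref{T:H}). The main structural point to get right is simply the order of the three reductions: bounded Borel $\to$ simple functions $\to$ indicators of Borel sets $\to$ indicators of open sets, each step being a uniform or $L^2(\Gamma(a))$ approximation controlled by \eqref{E:boundedactions} or \eqref{E:CSnorm}.
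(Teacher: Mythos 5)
Your proposal is correct. The chain of reductions (general simple tensor $\to$ bounded Borel second slot $\to$ simple functions $\to$ indicators of Borel sets $\to$ indicators of open sets) is sound, each step is controlled exactly as you say by the norm identity $\left\|a\otimes b\right\|_\mathcal{H}^2=\int_X b^2\,d\Gamma(a)$, and you are right that topological one-dimensionality plays no role here. The only point where you diverge from the paper is the final step: the paper does not write out a proof but refers to \cite[Theorem 4.1]{H11}, which (as the paper itself notes in Section \ref{S:resistance}) proceeds by a Dynkin-type (monotone class) argument — one checks that the Borel sets $B$ with $a\otimes\mathbf{1}_B$ in the closed span form a $\lambda$-system containing the open sets, using only continuity from below of the finite measure $\Gamma(a)$. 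You instead invoke outer regularity of $\Gamma(a)$ to approximate a Borel set from outside by open sets. Both mechanisms are valid; the Dynkin route needs only $\sigma$-additivity and finiteness of $\Gamma(a)$, while yours uses the (automatic, since $\Gamma(a)$ is a finite Radon measure on a compact metric space) regularity, and is arguably the more direct computation. Either way the lemma follows.
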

This is a version of \cite[Theorem 4.1]{H11}. The proof carries over from there.
 
Now Theorem \ref{T:H} is immediate from Lemma \ref{L:simple} and Lemma \ref{L:H11}. 
 
As $\partial$ is closed and $\mathcal{E}$ admits a spectral gap, the image $Im\:\partial $ of $\mathcal{F}$ under the derivation $\partial$ is easily seen to be a closed linear subspace of the Hilbert space $\mathcal{H}$. This yields the orthogonal decomposition
\begin{equation}\label{E:Hodge}
\mathcal{H}=Im\:\partial \oplus \mathcal{H}^1(X),
\end{equation}
where we write $\mathcal{H}^1(X)$ to denote the orthogonal complement $(Im\:\partial)^\bot$ of $Im\:\partial$. For certain classes of fractal spaces (\ref{E:Hodge}) has been investigated in \cite{CGIS11,CS09} and \cite{IRT}, for harmonic spaces in \cite{H11}. To the elements of $Im\:\partial$ we refer as \emph{exact $1$-forms}. Clearly $Im\:\partial$ is nontrivial. Whether $\mathcal{H}^1(X)$ is nontrivial or not depends on the (global) topology of $X$, see \cite{IRT} and Section \ref{S:nontrivial} below. On the other hand (\ref{E:Hodge}) reminds of the classical Hodge decomposition for differential forms on smooth $1$-dimensional manifolds, which is formulated in terms of local first order operators, cf. \cite{Jost,W71}. The next definition introduces the key notions of local exactness and local harmonicity. They provide some 'localized' way of testing whether a given $1$-form belongs to $Im\:\partial$ or $\mathcal{H}^1(X)$.

\begin{definition}\label{D:locexact}
A $1$-form $\omega\in\mathcal{H}$ is called \emph{locally exact} if there exist a finite open cover $\mathcal{U}=\left\lbrace U_\alpha\right\rbrace_{\alpha\in J}$ of $X$ and functions $f_\alpha\in\mathcal{B}$, $\alpha\in J$, such that
\[\omega\mathbf{1}_{U_\alpha}=\partial f_\alpha\mathbf{1}_{U_\alpha}, \ \alpha\in J .\]
A $1$-form $\omega\in\mathcal{H}$ is called \emph{locally harmonic} if there exist a finite open cover $\mathcal{U}=\left\lbrace U_\alpha\right\rbrace_{\alpha\in J}$ of $X$ and functions $h_\alpha\in\mathcal{B}$, $\alpha\in J$, such that each $h_\alpha$ is harmonic on $U_\alpha$ (in the Dirichlet form sense) and
\[\omega\mathbf{1}_{U_\alpha}=\partial h_\alpha\mathbf{1}_{U_\alpha}, \ \alpha\in J .\]
\end{definition}
These defining properties carry over to finite sums.
\begin{lemma}
Finite linear combinations $\omega=\sum_{i=1}^N \omega_i$ of locally exact (locally harmonic) $1$-forms $\omega_i$ on $X$ are again locally exact (locally harmonic).
\end{lemma}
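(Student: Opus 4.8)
The plan is to reduce the case of a finite sum to the case $N=2$ by induction, and then to handle $N=2$ by passing to a common refinement of the two given covers. Suppose $\omega_1$ and $\omega_2$ are locally exact, with associated finite open covers $\mathcal{U}^{(1)}=\{U_\alpha\}_{\alpha\in J_1}$ and $\mathcal{U}^{(2)}=\{U_\beta'\}_{\beta\in J_2}$ and functions $f_\alpha, g_\beta\in\mathcal{B}$ satisfying $\omega_1\mathbf{1}_{U_\alpha}=\partial f_\alpha\mathbf{1}_{U_\alpha}$ and $\omega_2\mathbf{1}_{U_\beta'}=\partial g_\beta\mathbf{1}_{U_\beta'}$. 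Consider the common refinement $\mathcal{U}=\{U_\alpha\cap U_\beta'\}_{(\alpha,\beta)\in J_1\times J_2}$, which is again a finite open cover of $X$. For the pair $(\alpha,\beta)$ set $h_{\alpha,\beta}:=f_\alpha+g_\beta\in\mathcal{B}$, using that $\mathcal{B}$ is a (Banach) algebra and in particular a vector space. On $U_\alpha\cap U_\beta'$ we have, by the right-module structure (the action of indicator functions on $\mathcal{H}$),
\[
\omega\mathbf{1}_{U_\alpha\cap U_\beta'}=(\omega_1+\omega_2)\mathbf{1}_{U_\alpha\cap U_\beta'}
=(\omega_1\mathbf{1}_{U_\alpha})\mathbf{1}_{U_\beta'}+(\omega_2\mathbf{1}_{U_\beta'})\mathbf{1}_{U_\alpha}
=(\partial f_\alpha)\mathbf{1}_{U_\alpha\cap U_\beta'}+(\partial g_\beta)\mathbf{1}_{U_\alpha\cap U_\beta'},
\]
and by the linearity of $\partial$ the right-hand side equals $\partial h_{\alpha,\beta}\mathbf{1}_{U_\alpha\cap U_\beta'}$. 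Here I use that for $V\subset U$ open one has $(\eta\mathbf{1}_U)\mathbf{1}_V=\eta\mathbf{1}_V$ for any $\eta\in\mathcal{H}$, which follows from $\mathbf{1}_U\mathbf{1}_V=\mathbf{1}_V$ and the associativity of the right action $(\eta d)d'=\eta(dd')$ established in Section \ref{S:locally}. Hence $\omega$ is locally exact with cover $\mathcal{U}$ and functions $h_{\alpha,\beta}$.

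For the locally harmonic case the argument is identical except that one must check that $h_{\alpha,\beta}=f_\alpha+g_\beta$ is harmonic on $U_\alpha\cap U_\beta'$ in the Dirichlet form sense. Since $f_\alpha$ is harmonic on $U_\alpha$, it satisfies $\mathcal{E}(f_\alpha,\varphi)=0$ for all $\varphi\in C_0(U_\alpha)\cap\mathcal{F}$, and likewise $\mathcal{E}(g_\beta,\varphi)=0$ for all $\varphi\in C_0(U_\beta')\cap\mathcal{F}$. If $\varphi\in C_0(U_\alpha\cap U_\beta')\cap\mathcal{F}$, then $\varphi$ lies in both $C_0(U_\alpha)\cap\mathcal{F}$ and $C_0(U_\beta')\cap\mathcal{F}$ (extending by zero as agreed before Lemma \ref{L:H11}), so $\mathcal{E}(h_{\alpha,\beta},\varphi)=\mathcal{E}(f_\alpha,\varphi)+\mathcal{E}(g_\beta,\varphi)=0$. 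Thus $h_{\alpha,\beta}$ is harmonic on $U_\alpha\cap U_\beta'$, and $\omega=\omega_1+\omega_2$ is locally harmonic with the common refinement cover.

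Finally, iterating: given $\omega=\sum_{i=1}^N\omega_i$, write $\omega=(\sum_{i=1}^{N-1}\omega_i)+\omega_N$; by the induction hypothesis the first summand is locally exact (resp. locally harmonic), and applying the two-term case completes the induction. The step I expect to require the most care is the bookkeeping with the module actions — specifically, justifying the identity $(\omega_1\mathbf{1}_{U_\alpha})\mathbf{1}_{U_\beta'}=\omega_1\mathbf{1}_{U_\alpha\cap U_\beta'}$ and the compatibility of $\partial$ with restriction to the refined cover — but these are immediate consequences of the associativity and boundedness of the right action and of the linearity of $\partial$, both recorded in Section \ref{S:locally}. Everything else is formal.
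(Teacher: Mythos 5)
Your proof is correct and is precisely the ``simple proof by refinement'' that the paper leaves to the reader: pass to the common refinement $\{U_\alpha\cap U_\beta'\}$, add the local potentials, and use the multiplicativity of the right $\mathcal{B}_b(X)$-action together with linearity of $\partial$ (and, for the harmonic case, linearity of $\mathcal{E}(\cdot,\varphi)$ and the inclusion $C_0(U_\alpha\cap U_\beta')\subset C_0(U_\alpha)\cap C_0(U_\beta')$). No gaps; the induction on $N$ and the module-action bookkeeping are exactly as intended.
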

The simple proof by refinement is left to the reader.

Now let 
\[\mathcal{H}_{loc}:=\clos\lin\left\lbrace \omega\in\mathcal{H}: \text{ $\omega$ locally exact}\right\rbrace\]
and
\[\mathcal{H}_{loc}^1:=\clos\lin\left\lbrace \omega\in\mathcal{H}: \text{ $\omega$ locally harmonic}\right\rbrace\]
denote the spaces of limits of locally exact and locally harmonic $1$-forms on $X$, respectively. Obviously $\mathcal{H}^1_{loc}\subset \mathcal{H}_{loc}\subset \mathcal{H}$.  

\begin{lemma}\label{L:inclusionlocal}
The space $\mathcal{H}_{loc}^1$ is contained in $\mathcal{H}^1(X)$.
\end{lemma}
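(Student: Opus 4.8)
The goal is to show that every locally harmonic $1$-form $\omega$ lies in $\mathcal{H}^1(X) = (Im\:\partial)^\bot$; taking closed linear spans then gives $\mathcal{H}^1_{loc}\subset\mathcal{H}^1(X)$. So fix a locally harmonic $\omega$ with witnessing finite open cover $\mathcal{U}=\{U_\alpha\}_{\alpha\in J}$ and functions $h_\alpha\in\mathcal{B}$, each harmonic on $U_\alpha$, with $\omega\mathbf{1}_{U_\alpha}=\partial h_\alpha\mathbf{1}_{U_\alpha}$. I must show $\langle\omega,\partial g\rangle_\mathcal{H}=0$ for every $g\in\mathcal{F}$; by density of $\mathcal{B}$ in $\mathcal{F}$ and boundedness of $\partial$ it suffices to take $g\in\mathcal{B}$.

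\textbf{Key steps.} First I would pick an energy-finite partition of unity $\{\varphi_\alpha\}_{\alpha\in J}\subset C(X)\cap\mathcal{F}$ subordinate to $\mathcal{U}$ (Lemma \ref{L:partition2}), so $\sum_\alpha\varphi_\alpha\equiv 1$, $0\le\varphi_\alpha\le1$, $\supp\varphi_\alpha\subset U_\alpha$. Then write
\[
\langle\omega,\partial g\rangle_\mathcal{H}=\sum_{\alpha\in J}\langle\omega,(\partial g)\,\varphi_\alpha\rangle_\mathcal{H}=\sum_{\alpha\in J}\langle\omega\mathbf{1}_{U_\alpha},(\partial g)\,\varphi_\alpha\rangle_\mathcal{H},
\]
the last equality because $\varphi_\alpha$ is supported in $U_\alpha$, so inserting the indicator $\mathbf{1}_{U_\alpha}$ changes nothing (this uses the module structure and the fact that $\Gamma_\mathcal{H}$-integration against $\varphi_\alpha$ only sees $U_\alpha$). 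On $U_\alpha$ we may replace $\omega\mathbf{1}_{U_\alpha}$ by $\partial h_\alpha\mathbf{1}_{U_\alpha}$, so each summand equals $\langle\partial h_\alpha\mathbf{1}_{U_\alpha},(\partial g)\varphi_\alpha\rangle_\mathcal{H}=\langle\partial h_\alpha,(\partial g)\varphi_\alpha\rangle_\mathcal{H}=\int_X\varphi_\alpha\,d\Gamma(h_\alpha,g)$. Now the harmonicity of $h_\alpha$ on $U_\alpha$ should give, for any $\psi\in C_0(U_\alpha)\cap\mathcal{F}$, the identity $\int_X\psi\,d\Gamma(h_\alpha,g)=0$ — indeed $\mathcal{E}(h_\alpha,\psi g)=0$ and $\mathcal{E}(h_\alpha,g\psi)-\int g\,d\Gamma(h_\alpha,\psi)-\int\psi\,d\Gamma(h_\alpha,g)$ relates via the Leibniz-type identity $(\ref{E:energymeasure})$; more directly, for $\psi\in C_0(U_\alpha)\cap\mathcal{F}$ the function $\psi g\in C_0(U_\alpha)\cap\mathcal{F}$ is a legitimate test function for harmonicity of $h_\alpha$ on $U_\alpha$, and expanding $\mathcal{E}(h_\alpha,\psi g)$ by the energy-measure product rule isolates $\int\psi\,d\Gamma(h_\alpha,g)$ plus a term involving $\int g\,d\Gamma(h_\alpha,\psi)$, which one removes by also testing against $\psi$ and a further approximation — I'd phrase this cleanly via the signed measure $\Gamma(h_\alpha,g)$ being the Radon–Nikodym-type object killed on $U_\alpha$. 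Taking $\psi\nearrow\varphi_\alpha$ (or simply noting $\varphi_\alpha$ itself is admissible since $\supp\varphi_\alpha\subset U_\alpha$) yields $\int_X\varphi_\alpha\,d\Gamma(h_\alpha,g)=0$, hence each summand vanishes and $\langle\omega,\partial g\rangle_\mathcal{H}=0$.

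\textbf{Main obstacle.} The delicate point is justifying $\int_X\varphi_\alpha\,d\Gamma(h_\alpha,g)=0$ from harmonicity of $h_\alpha$ on $U_\alpha$ when $g\in\mathcal{B}$ is arbitrary and $\varphi_\alpha$ need not vanish near $\partial U_\alpha$ in the $\mathcal{F}$-sense required to be a test function — one must verify $\varphi_\alpha g\in C_0(U_\alpha)\cap\mathcal{F}$ (true since $\supp(\varphi_\alpha g)\subset\supp\varphi_\alpha\subset U_\alpha$ and $\varphi_\alpha g\in\mathcal{B}$) and then extract $\int\varphi_\alpha\,d\Gamma(h_\alpha,g)$ from $0=\mathcal{E}(h_\alpha,\varphi_\alpha g)$ using the identity $\mathcal{E}(h_\alpha,\varphi_\alpha g)=\int_X\varphi_\alpha\,d\Gamma(h_\alpha,g)+\int_X g\,d\Gamma(h_\alpha,\varphi_\alpha)$, and separately showing $\int_X g\,d\Gamma(h_\alpha,\varphi_\alpha)=0$ — the latter again by harmonicity, testing against $g\varphi_\alpha\in C_0(U_\alpha)\cap\mathcal{F}$, which makes the two relations symmetric and forces each integral to zero. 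I expect this bookkeeping with the energy-measure product rule and the admissibility of cutoff functions to be the only real work; everything else is the module structure of $\mathcal{H}$ and a partition-of-unity decomposition.
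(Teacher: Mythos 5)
Your reduction to showing $\langle\omega,\partial g\rangle_{\mathcal H}=0$ for $g\in\mathcal B$, and your computation $\langle\omega,\partial g\rangle_{\mathcal H}=\sum_\alpha\int_X\varphi_\alpha\,d\Gamma(h_\alpha,g)$ via the partition of unity and the module action, are both fine. The gap is in the key claim that each summand $\int_X\varphi_\alpha\,d\Gamma(h_\alpha,g)$ vanishes: this is false in general. Already on an interval $U\subset\mathbb R$ with the standard energy, $h(x)=x$ is harmonic on $U$, $\Gamma(h,g)=g'\,dx$, and $\int\varphi\,g'\,dx=-\int\varphi'g\,dx\neq 0$ for generic $g\in\mathcal B$. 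The argument you offer for it is circular: testing harmonicity against $\varphi_\alpha g$ and against $g\varphi_\alpha$ produces the \emph{same} single relation $\int_X\varphi_\alpha\,d\Gamma(h_\alpha,g)+\int_X g\,d\Gamma(h_\alpha,\varphi_\alpha)=0$, and $A+B=0$ does not force $A=B=0$ (in the interval example $A=-B\neq 0$). Only the sum over $\alpha$ vanishes, not the individual terms, so a term-by-term argument along your lines cannot close.

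The repair is to distribute the cutoff onto the \emph{function} rather than onto the form, which is what the paper does: consider $\partial(\varphi_\alpha f)$ instead of $(\partial f)\varphi_\alpha$. Since $\supp(\varphi_\alpha f)\subset U_\alpha$, Lemma \ref{L:support} gives $\supp\partial(\varphi_\alpha f)\subset U_\alpha$, hence $\langle\partial(\varphi_\alpha f),\omega\rangle_{\mathcal H}=\langle\partial(\varphi_\alpha f),\omega\mathbf 1_{U_\alpha}\rangle_{\mathcal H}=\langle\partial(\varphi_\alpha f),\partial h_\alpha\rangle_{\mathcal H}=\mathcal E(\varphi_\alpha f,h_\alpha)$, and this \emph{is} zero directly from the Dirichlet-form definition of harmonicity because $\varphi_\alpha f\in C_0(U_\alpha)\cap\mathcal F$ is an admissible test function. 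Summing over $\alpha$ and using $\sum_\alpha\varphi_\alpha f=f$ gives $\langle\partial f,\omega\rangle_{\mathcal H}=0$. The difference between the two decompositions is exactly the Leibniz term $f\partial\varphi_\alpha$, i.e.\ the term $\int_X f\,d\Gamma(\varphi_\alpha,h_\alpha)$ that your bookkeeping could not eliminate; grouping it with $\int_X\varphi_\alpha\,d\Gamma(f,h_\alpha)$ into the single quantity $\mathcal E(\varphi_\alpha f,h_\alpha)$ is precisely what makes the argument work.
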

\begin{proof} Given $\omega\in\mathcal{H}_{loc}^1$, let $\mathcal{U}=\left\lbrace U_\alpha\right\rbrace_{\alpha\in J}$ be a finite open cover of $X$ and $h_\alpha\in\mathcal{B}$ functions harmonic in $U_\alpha$, respectively, such that $\omega\mathbf{1}_{U_\alpha}=h_\alpha\mathbf{1}_{U_\alpha}$ for each $\alpha$. Now consider an arbitrary $f\in\mathcal{B}$. Let $\left\lbrace\varphi_\alpha\right\rbrace_{\alpha\in J}\subset C(X)\cap\mathcal{F}$ be an energy finite partition of unity subordinate to $\mathcal{U}$, i.e. $\varphi_\alpha\in C_0(U_\alpha)\cap\mathcal{F}$, $0\leq \varphi_\alpha\leq 1$ and $\sum_{\alpha\in J}\varphi_\alpha=1$, cf. Lemma \ref{L:partition2}. Then $\varphi_\alpha f\in\mathcal{B}$, $\supp(\varphi_\alpha f)\subset U_\alpha$ and by Lemma \ref{L:support} also $\supp(\partial(\varphi_\alpha f))\subset U_\alpha$ for any $\alpha$. Consequently we have
\begin{align}
\left\langle \partial(\varphi_\alpha f),\omega\right\rangle_\mathcal{H}=\left\langle \partial (\varphi_\alpha f),\partial h_\alpha\mathbf{1}_{U_\alpha}\right\rangle_\mathcal{H} &=\left\langle \partial (\varphi_\alpha f),\partial h_\alpha\right\rangle_\mathcal{H}\notag\\
&=\mathcal{E}(\varphi_\alpha f, h_\alpha)\notag\\
& =0,\notag
\end{align}
and summing over $\alpha\in J$, $\left\langle \partial f,\omega\right\rangle_\mathcal{H}=0$. As $f\in \mathcal{B}$ was arbitrary, $\omega$ must be an element of $(Im\:\partial)^\bot=\mathcal{H}^1(X)$.
\end{proof}

In fact, these spaces coincide.
\begin{theorem}\label{T:Hcoincide}\mbox{}
\begin{enumerate}
\item[(i)] We have $\mathcal{H}_{loc}=\mathcal{H}$, i.e. the locally exact $1$-forms are dense in $\mathcal{H}$. 
\item[(ii)] We have $\mathcal{H}^1_{loc}=\mathcal{H}^1(X)$, i.e. the locally harmonic $1$-forms are dense in $\mathcal{H}^1(X)$.
\end{enumerate}
\end{theorem}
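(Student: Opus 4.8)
The plan is to reduce everything to the denseness statement of Remark \ref{R:secondremark}, namely that the space $\mathcal{S}$ of sums $g_1\otimes\mathbf{1}_{V_1}+g_2\otimes\mathbf{1}_{V_2}$ with $\partial V_i$ zero-dimensional, $\partial V_1\cap\partial V_2=\emptyset$, $g_i\in\mathcal{F}^{\partial V_i}$, is dense in $\mathcal{H}$. Since $\mathcal{H}_{loc}$ and $\mathcal{H}^1_{loc}$ are closed, and since by Lemma \ref{L:inclusionlocal} we already know $\mathcal{H}^1_{loc}\subset\mathcal{H}^1(X)$, it suffices for (i) to show $\mathcal{S}\subset\mathcal{H}_{loc}$, and for (ii) to show that the $\mathcal{H}^1(X)$-component of every element of $\mathcal{S}$ lies in $\mathcal{H}^1_{loc}$; then (i) gives $\mathcal{H}_{loc}=\mathcal{H}$, and combining with \eqref{E:Hodge} and Lemma \ref{L:inclusionlocal} gives $\mathcal{H}^1(X)=\mathcal{H}^1(X)\cap\mathcal{H}=\mathcal{H}^1(X)\cap\overline{\mathcal{H}^1_{loc}+\overline{Im\:\partial}}$, from which (ii) should follow once the local-harmonicity side is in place. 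Since finite sums of locally exact (resp. locally harmonic) forms are again of that type, it is enough to treat a single generator $g\otimes\mathbf{1}_V$ with $\partial V$ closed and zero-dimensional and $g\in\mathcal{F}^{\partial V}$.

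First I would establish (i). Fix such a generator $g\otimes\mathbf{1}_V$. Because $g\in\mathcal{F}^{\partial V}=\clos_{\mathcal{E}_1}\mathcal{S}^{\partial V}$, I may assume $g\in\mathcal{S}^{\partial V}$: the map $h\mapsto h\otimes\mathbf{1}_V$ is $\mathcal{E}_1$-to-$\mathcal{H}$ continuous (by \eqref{E:normandenergy} and the bimodule bound \eqref{E:boundedactions}, noting $h\otimes\mathbf{1}_V = (\partial h)\mathbf{1}_V$), and $\mathcal{H}_{loc}$ is closed, so it transfers. So let $g$ be bounded, energy finite, and locally constant q.e. on some finite open cover $\mathcal{O}$ of $\partial V$. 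The union $O$ of the sets in $\mathcal{O}$ is an open neighborhood of $\partial V$, and $g$ is q.e. constant on each connected component of $O$. Now I build a finite open cover $\mathcal{U}$ of $X$ adapted to this situation: take $O$ itself (or rather the components of $O$, grouped), the open set $V\setminus\overline{\partial V}$... more carefully, $V$ is open, $\partial V\subset O$, so $\{O\}\cup\{V\}\cup\{X\setminus\overline{V}\}$ need not cover but $\{O, V, \operatorname{int}(V^c)\}$ does cover $X$ since any point not in $V$ and not in $\operatorname{int}(V^c)$ lies in $\partial V\subset O$. Wait — $X = V\cup\partial V\cup\operatorname{int}(V^c)$ and $\partial V\subset O$, so $\{O,V,\operatorname{int}(V^c)\}$ covers $X$. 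On $V$ put $f = g$; then $(g\otimes\mathbf{1}_V)\mathbf{1}_V = g\otimes\mathbf{1}_V = (\partial g)\mathbf{1}_V = (\partial f)\mathbf{1}_V$. On $\operatorname{int}(V^c)$ put $f = 0$; then $(g\otimes\mathbf{1}_V)\mathbf{1}_{\operatorname{int}(V^c)} = g\otimes(\mathbf{1}_V\mathbf{1}_{\operatorname{int}(V^c)}) = 0 = (\partial 0)\mathbf{1}_{\operatorname{int}(V^c)}$. On each component $C$ of $O$, $g$ equals a constant $c_C$ q.e.; but then $g\otimes\mathbf{1}_V$ restricted to $C$ — here I use that $\Gamma(g)$ vanishes on $C$ since $g$ is q.e. constant there (strong locality), so by \eqref{E:CSnorm} $\|(g\otimes\mathbf{1}_V)\mathbf{1}_C\|_\mathcal{H}^2 = \int_C\mathbf{1}_V\,d\Gamma(g)=0$, hence $(g\otimes\mathbf{1}_V)\mathbf{1}_C = 0 = (\partial 0)\mathbf{1}_C$. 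Taking $f_\alpha$ to be $g$, $0$, $0$ on the three types of cover sets exhibits $g\otimes\mathbf{1}_V$ as locally exact, whence $\mathcal{S}\subset\mathcal{H}_{loc}$ and $\mathcal{H}_{loc}=\mathcal{H}$.

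For (ii), by \eqref{E:Hodge} write the orthogonal projection $P\colon\mathcal{H}\to\mathcal{H}^1(X)$. Since $\mathcal{H}_{loc}=\mathcal{H}$ and $P$ is bounded, $P(\mathcal{H}_{loc})$ is dense in $\mathcal{H}^1(X)$; it suffices to show $P\omega\in\mathcal{H}^1_{loc}$ for every locally exact $\omega$, and by linearity and closedness for a single generator $g\otimes\mathbf{1}_V$ as above, indeed for $g\in\mathcal{S}^{\partial V}$. The idea is: with the cover $\{O,V,\operatorname{int}(V^c)\}$ and partition of unity $\{\varphi_\alpha\}$ from Lemma \ref{L:partition2}, decompose. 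Actually the cleanest route: for $g\in\mathcal{S}^{\partial V}$, consider the harmonic projection inside $V$. Replace $g$ on $V$ by a function $h$ that is harmonic in $V$ and agrees with $g$ near $\partial V$ in the appropriate sense — concretely, use the subdomains: since $g$ is locally constant near $\partial V$, one can take $h_\alpha := g$ on the boundary-neighborhood pieces (constants are harmonic), $h := \mathcal{P}_{\mathcal{H}^{\partial V}}g$ (harmonic in $V$, and equal to $g$ q.e. on $\partial V$ by Proposition \ref{P:Dirichlet}(i)) on $V$, and $0$ on $\operatorname{int}(V^c)$; then $\widetilde\omega := \sum \partial(h_\alpha)\mathbf{1}_{U_\alpha}$ patched via the partition of unity is locally harmonic, so lies in $\mathcal{H}^1_{loc}\subset\mathcal{H}^1(X)$, and $\omega-\widetilde\omega$ should be checked to be exact, i.e. $P\omega = P\widetilde\omega = \widetilde\omega$. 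The verification that $\omega-\widetilde\omega\in Im\:\partial$ is where I expect to spend the most effort: one must glue the local primitives $g-h$ on $V$, $0$ on $\operatorname{int}(V^c)$, and $g-g=0$ near $\partial V$ into a single global $F\in\mathcal{B}$ (or a limit of such) with $\partial F\mathbf{1}_{U_\alpha} = (\omega-\widetilde\omega)\mathbf{1}_{U_\alpha}$ — a locally exact form all of whose local primitives can be chosen to vanish near the overlaps, which forces global exactness; here I would invoke the reconstruction Theorem \ref{T:reconstruct} (applied to $\partial V$ and a disjoint zero-dimensional set, after refining) together with the partition-of-unity argument already used in the proof of Lemma \ref{L:inclusionlocal}, running it in reverse. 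The main obstacle, then, is the bookkeeping needed to show that a locally exact form whose local primitives are locally constant on the overlaps of the cover is in fact exact — equivalently, that the obstruction to global exactness is a \v{C}ech cocycle with values in locally constant functions, which vanishes here because each generator's primitive is genuinely constant (not merely closed) on the relevant pieces.
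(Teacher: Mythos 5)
Part (i) of your proposal is essentially the paper's argument: you reduce via Remark \ref{R:secondremark} and $\mathcal{E}_1$-approximation to a single generator $\partial g\mathbf{1}_V$ with $g\in\mathcal{S}^{\partial V}$, and then exhibit local exactness on a cover built from $V$, the complement, and the neighborhood $O$ of $\partial V$ on which $\Gamma(g)$ vanishes. This matches Lemma \ref{L:proto}(i) and Lemma \ref{L:protodense}(i) (the paper uses the two-set cover $\{V\cup W,\ \overline{V}^c\cup W\}$ instead of your three-set cover, but the mechanism is identical).

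Part (ii), however, has a genuine gap, and it is located exactly where you placed your explicit construction rather than where you expected to "spend the most effort". You propose $\widetilde\omega=\partial h\,\mathbf{1}_V$ with $h:=\mathcal{P}_{\mathcal{H}^{\partial V}}g$ and claim it is locally harmonic with primitives $h$ on $V$, $g$ on the boundary pieces, and $0$ on $\operatorname{int}(V^c)$. This fails on the boundary pieces: there $\widetilde\omega\mathbf{1}_{O}=\partial h\,\mathbf{1}_{V\cap O}$, whereas your proposed primitive $g$ gives $\partial g\,\mathbf{1}_{O}=0$ (since $\Gamma(g)(O)=0$); these differ because $h$, being harmonic rather than locally constant near $\partial V$, has $\Gamma(h)(V\cap O)>0$ in general. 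Already on the circle with $V$ an arc and $\partial V=\{a,b\}$ one sees that $\partial h\,\mathbf{1}_V$ has a density jumping from $0$ to $h'\neq 0$ at $a$ and $b$, so it is neither locally harmonic nor orthogonal to $Im\:\partial$ (test against $\partial\varphi$ with $\varphi$ supported near $a$). Hence the chain $P\omega=P\widetilde\omega=\widetilde\omega$ breaks at the second equality, and the remaining gluing/\v{C}ech-cocycle argument for exactness of $\omega-\widetilde\omega$ is only sketched. The paper's Lemma \ref{L:proto}(ii) sidesteps all of this: it takes $\omega:=\mathcal{P}_{\mathcal{H}^1}(\partial g\mathbf{1}_V)=\partial g\mathbf{1}_V-\partial G$ where $\partial G:=\mathcal{P}_{Im\:\partial}(\partial g\mathbf{1}_V)$ is the \emph{global} exact component (never constructed explicitly), and then observes that the local primitives $g-G$ on $V\cup W$ and $-G$ on $\overline{V}^c\cup W$ are \emph{automatically} harmonic there, because for $\varphi\in C_0(U_i)\cap\mathcal{F}$ the support Lemma \ref{L:support} localizes the pairing and gives $\mathcal{E}(\varphi,\cdot)=\langle\partial\varphi,\omega\rangle_{\mathcal{H}}=0$ by orthogonality. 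If you replace your explicit $\mathcal{P}_{\mathcal{H}^{\partial V}}$-construction by this observation, part (ii) goes through along the lines you intended; density of the projected generators in $\mathcal{H}^1(X)$ then follows from (i) exactly as you say.
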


To prove Theorem \ref{T:Hcoincide} we consider the spaces $\mathcal{S}_{loc}$ as defined in (\ref{E:Sloc}) and
\[\mathcal{S}^1_{loc}:=\mathcal{P}_{\mathcal{H}^1}(\mathcal{S}_{loc}),\]
where $\mathcal{P}_{\mathcal{H}^1}$ denotes the orthogonal projection of $\mathcal{H}$ onto $\mathcal{H}^1(X)$. Similarly $\mathcal{P}_{Im\:\partial}$ denotes the orthogonal projection onto $Im\:\partial$.

The next lemma is a key technical result of this section. 

\begin{lemma}\label{L:proto}
Let $V\subset X$ open with $\partial V$ zero-dimensional and $f\in\mathcal{S}^{\partial V}$. Then
\begin{enumerate}
\item[(i)] The $1$-form $\partial f\mathbf{1}_V$ is locally exact.
\item[(ii)] The $1$-form $\mathcal{P}_{\mathcal{H}^1}(\partial f\mathbf{1}_V)$ is locally harmonic.
\end{enumerate}
Consequently $\mathcal{S}_{loc}\subset\mathcal{H}_{loc}$ and $\mathcal{S}^1_{loc}\subset\mathcal{H}^1_{loc}$.
\end{lemma}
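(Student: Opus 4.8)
The plan is to fix $V\subset X$ open with $\partial V$ zero-dimensional and $f\in\mathcal{S}^{\partial V}$. By definition of $\mathcal{S}^{\partial V}$ there is a finite open cover $\mathcal{W}=\{W_\beta\}_{\beta\in I}$ of $\partial V$ such that $\widetilde f$ is locally constant q.e. on $\mathcal{W}$, i.e. constant q.e. on every connected component of $W:=\bigcup_\beta W_\beta$. First I would enlarge $\mathcal{W}$ to a finite open cover $\mathcal{U}=\{U_\alpha\}_{\alpha\in J}$ of all of $X$: keep the sets $W_\beta$ (which cover $\partial V$), and add finitely many open sets that cover the compact set $X\setminus W$ and each of which is contained either in $V$ or in $\overline{V}^{\,c}=X\setminus\overline V$. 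This is possible because $X\setminus W\subset V\cup(X\setminus\overline V)$, since $\partial V\subset W$; compactness then gives a finite subcover of $X\setminus W$ by such sets. On each added set contained in $V$ we have $\omega\mathbf 1_{U_\alpha}=\partial f\mathbf 1_V\mathbf 1_{U_\alpha}=\partial f\mathbf 1_{U_\alpha}$, so we may take $f_\alpha:=f$ there; on each added set contained in $X\setminus\overline V$ we have $\mathbf 1_V\mathbf 1_{U_\alpha}=0$, so $\omega\mathbf 1_{U_\alpha}=0=\partial 0\cdot\mathbf 1_{U_\alpha}$ and we take $f_\alpha:=0$.

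The remaining, and main, work is to produce the local primitives on the sets $W_\beta$. Fix $\beta$ and let $C$ be a connected component of $W$ containing $W_\beta$; on $C$, $\widetilde f$ equals a constant $c_\beta$ q.e. The idea is that on $W_\beta$ the $1$-form $\partial f\mathbf 1_V$ should be realized by $\partial$ of a function that agrees with $f$ on $W_\beta\cap V$ and is constant ($=c_\beta$) across $\partial V$; but since $f$ itself is constant q.e. on the component $C\supset W_\beta$, one checks using Lemma \ref{L:support} and the locality of $(\mathcal E,\mathcal F)$ that $\Gamma(f)$ gives no mass to any open set on which $\widetilde f$ is q.e. constant, hence $\Gamma_\mathcal H(\partial f\mathbf 1_V)$ vanishes on $W_\beta$. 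Concretely: if $\widetilde f\equiv c_\beta$ q.e. on the open set $C$, then $\mathcal E(\varphi f,\varphi)-\tfrac12\mathcal E(f^2,\varphi)=\int\varphi\,d\Gamma(f)$ for $\varphi\in C_0(C)\cap\mathcal F$, and replacing $f$ by $f-c_\beta$ (which lies in $\mathcal F$ and vanishes q.e. on $C$) shows via \cite[Corollary 2.3.1]{FOT} and strong locality that $\Gamma(f)(C)=0$. Therefore $\big(\partial f\mathbf 1_V\big)\mathbf 1_{W_\beta}=0=\partial f_\beta\,\mathbf 1_{W_\beta}$ with $f_\beta:=0$, and $f_\beta$ is trivially harmonic on $W_\beta$. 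This simultaneously settles (i) and (ii): the cover $\mathcal U$ together with the functions $f_\alpha$ (namely $f$, $0$, or $0$ according to the three cases) exhibits $\partial f\mathbf 1_V$ as locally exact, and since all the $f_\alpha$ are in fact harmonic on the respective $U_\alpha$ (a constant, or $f$ harmonic on an open subset of $V$ — wait, $f$ need not be harmonic on $V$), one must be slightly more careful for part (ii).

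For part (ii) the point is that we do not claim $\partial f\mathbf 1_V$ itself is locally harmonic, but only its projection $\mathcal P_{\mathcal H^1}(\partial f\mathbf 1_V)$. Write $\partial f\mathbf 1_V=\mathcal P_{Im\,\partial}(\partial f\mathbf 1_V)+\mathcal P_{\mathcal H^1}(\partial f\mathbf 1_V)$; the first summand is exact, say $=\partial g$ for some $g\in\mathcal F$, hence globally (and thus locally) exact. So $\mathcal P_{\mathcal H^1}(\partial f\mathbf 1_V)=\partial f\mathbf 1_V-\partial g$ is a difference of two locally exact forms. Using the cover $\mathcal U$ above for $\partial f\mathbf 1_V$, on each $U_\alpha$ we have $\mathcal P_{\mathcal H^1}(\partial f\mathbf 1_V)\mathbf 1_{U_\alpha}=\partial(f_\alpha-g)\mathbf 1_{U_\alpha}$; now replace $f_\alpha-g$ on $U_\alpha$ by its harmonic part $h_\alpha:=\mathcal P_{\mathcal H^{U_\alpha^c}}(f_\alpha-g)\in\mathcal F$, after first refining $\mathcal U$ and multiplying by a cutoff so that $h_\alpha\in\mathcal B$. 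The key identity is that for a $1$-form $\eta\in\mathcal H^1(X)$ and any $\psi\in C_0(U_\alpha)\cap\mathcal F$ one has $\langle\eta,\partial\psi\rangle_\mathcal H=0$, so $\eta\mathbf 1_{U_\alpha}$ only "sees" the harmonic part of any local primitive; thus $\eta\mathbf 1_{U_\alpha}=\partial(f_\alpha-g)\mathbf 1_{U_\alpha}=\partial h_\alpha\,\mathbf 1_{U_\alpha}$ on a suitable refinement, with $h_\alpha$ harmonic on the refined piece. The main obstacle I anticipate is precisely this bookkeeping for (ii): ensuring the harmonic local primitives can be taken bounded (in $\mathcal B$), that the replacement of $f_\alpha-g$ by $h_\alpha$ does not disturb the restriction of the form on $U_\alpha$ (which requires the orthogonality characterization of $\mathcal H^1(X)$ applied to test functions supported in $U_\alpha$, together with Lemma \ref{L:support}), and that the whole construction is compatible across overlaps after passing to a common refinement. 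The final sentence $\mathcal S_{loc}\subset\mathcal H_{loc}$ and $\mathcal S^1_{loc}\subset\mathcal H^1_{loc}$ then follows immediately by taking finite linear combinations, using the lemma that finite sums of locally exact (resp. locally harmonic) forms are again of the same type.
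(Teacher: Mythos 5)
Your part (i) is essentially the paper's argument: the whole point is that $\Gamma(f)$ charges no open set on which $\widetilde f$ is locally constant, so the form $\partial f\mathbf 1_V$ does not ``see'' a neighborhood $W$ of $\partial V$. The paper packages this more economically with the two-set cover $U_1:=V\cup W$, $U_2:=\overline V^{\,c}\cup W$, on which $(\partial f\mathbf 1_V)\mathbf 1_{U_1}=\partial f\mathbf 1_{U_1}$ and $(\partial f\mathbf 1_V)\mathbf 1_{U_2}=0$ because $\Gamma(f)(W)=0$; your finer cover does the same job with more bookkeeping. This part is correct.

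Part (ii) contains a genuine gap, and it sits exactly where you flagged your ``main obstacle.'' You write $\mathcal P_{\mathcal H^1}(\partial f\mathbf 1_V)=\partial f\mathbf 1_V-\partial g$ and, on each $U_\alpha$, propose to replace the local primitive $f_\alpha-g$ by its harmonic projection $h_\alpha:=\mathcal P_{\mathcal H^{U_\alpha^c}}(f_\alpha-g)$, claiming $\partial(f_\alpha-g)\mathbf 1_{U_\alpha}=\partial h_\alpha\mathbf 1_{U_\alpha}$. That identity does not follow from the orthogonality $\langle\eta,\partial\psi\rangle_{\mathcal H}=0$ and is false in general: the difference $f_\alpha-g-h_\alpha$ lies in $\mathcal F_{U_\alpha}$, i.e.\ vanishes q.e.\ off $U_\alpha$, but its energy measure restricted to $U_\alpha$ need not vanish, so the restricted $1$-forms differ. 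The restriction $\eta\mathbf 1_{U_\alpha}$ is not determined by the pairings $\langle\eta,\partial\psi\rangle_{\mathcal H}$ with $\psi\in C_0(U_\alpha)\cap\mathcal F$, so it does not ``see only the harmonic part'' in the sense you need. The fix --- and this is what the paper does --- is that no replacement is necessary: the very identity you wrote down shows that $f_\alpha-g$ is \emph{already} harmonic on $U_\alpha$ in the Dirichlet form sense. Indeed, for $\varphi\in C_0(U_\alpha)\cap\mathcal F$ one has $\partial\varphi=\partial\varphi\,\mathbf 1_{U_\alpha}$ by Lemma \ref{L:support}, hence
\[
\mathcal E(\varphi,f_\alpha-g)=\left\langle\partial\varphi,\partial(f_\alpha-g)\right\rangle_{\mathcal H}
=\left\langle\partial\varphi,\partial(f_\alpha-g)\mathbf 1_{U_\alpha}\right\rangle_{\mathcal H}
=\left\langle\partial\varphi,\omega\right\rangle_{\mathcal H}=0,
\]
since $\omega\in(\mathrm{Im}\:\partial)^\bot$. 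So the cover from part (i) together with the primitives $f-g$ on $U_1$ and $-g$ on $U_2$ already exhibits local harmonicity; the detour through $\mathcal P_{\mathcal H^{U_\alpha^c}}$, refinements and cutoffs should be deleted rather than repaired. (The remaining point you both share --- choosing $g\in\mathcal B$ rather than merely $g\in\mathcal F$ --- is a separate technicality that the paper also passes over.) The final inclusions $\mathcal S_{loc}\subset\mathcal H_{loc}$ and $\mathcal S^1_{loc}\subset\mathcal H^1_{loc}$ then follow from closedness under finite sums, as you say.
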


\begin{proof}
Let $\left\lbrace W_k\right\rbrace_{k=1}^N$ be a finite open cover of $\partial V$ such that $f$ is constant on each $W_k$. Put
\begin{equation}\label{E:unionW}
W:=\bigcup_{k=1}^N W_k
\end{equation}
and define $U_1:=V\cup W$ as well as $U_2:=\overline{V}^c\cup W$. Obviously $\mathcal{U}=\left\lbrace U_1,U_2\right\rbrace$ is a finite open cover of $X$. By (\ref{E:unionW}) we have
\[\Gamma(f)(W)\leq\sum_{k=1}^N\Gamma(f)(W_k)=0.\]
Therefore $\left\|\partial f\mathbf{1}_{U_1}-\partial f\mathbf{1}_V\right\|_{\mathcal{H}}^2=\Gamma(f)(U_1\setminus V)\leq \Gamma(f)(W)=0$
and
\begin{equation}\label{E:inside}
(\partial f\mathbf{1}_V)\mathbf{1}_{U_1}=\partial f\mathbf{1}_V=\partial f\mathbf{1}_{U_1}.
\end{equation}
Similarly
\begin{equation}\label{E:outside}
(\partial f\mathbf{1}_V)\mathbf{1}_{U_2}=(\partial f\mathbf{1}_{U_2})\mathbf{1}_V=(\partial f\mathbf{1}_{\overline{V}^c})\mathbf{1}_V=0,
\end{equation}
and (i) follows. We turn to (ii). Let $g\in\mathcal{B}$ be a function such that 
\[\partial g=\mathcal{P}_{Im\:\partial}(\partial f\mathbf{1}_{V}).\] 
Clearly $\omega:=\mathcal{P}_{\mathcal{H}^1}(\partial f\mathbf{1}_V)$ is a member of $\mathcal{H}^1(X)$. By (\ref{E:Hodge}) it equals $\partial f\mathbf{1}_V-\partial g$. According to (\ref{E:inside}),
\[\omega\mathbf{1}_{U_1}=(\partial f\mathbf{1}_V-\partial g)\mathbf{1}_{U_1}=\partial(f-g)\mathbf{1}_{U_1}.\]
Given $\varphi\in C_0(U_1)\cap \mathcal{F}$, we observe
\[\mathcal{E}(\varphi,f-g)=\left\langle\partial\varphi,\partial (f-g)\right\rangle_\mathcal{H}=\left\langle \partial\varphi,\omega\right\rangle_\mathcal{H}=0\]
by (\ref{E:inside}) and (\ref{E:Hodge}). Hence $f-g$ is harmonic on $U_1$. Similarly,
\[\omega\mathbf{1}_{U_2}=(\partial f \mathbf{1}_V-\partial g)\mathbf{1}_{U_2}=-\partial g\mathbf{1}_{U_2}\]
by (\ref{E:outside}), and for $\varphi\in C_0(U_2)\cap \mathcal{F}$ we have
\[\mathcal{E}(\varphi,g)=-\left\langle \partial\varphi, \omega\right\rangle_\mathcal{H}=0,\]
so $g$ is harmonic on $U_2$. Consequently $\omega$ is locally harmonic.
\end{proof}

Recall that according to Theorem \ref{T:H} the space $\mathcal{S}_{loc}$ is dense in $\mathcal{H}$.

\begin{lemma}\label{L:protodense}\mbox{}
The space $\mathcal{S}_{loc}^1$ is dense in $\mathcal{H}^1(X)$.
\end{lemma}

\begin{proof}
Let $\eta\in\mathcal{H}^1\subset \mathcal{H}$. According to (i) there is a sequence $(\eta^{(n)})_n\subset \mathcal{S}_{loc}$ approximating $\eta$ in $\mathcal{H}$. By (\ref{E:Hodge}) also $\left( \mathcal{P}_{\mathcal{H}^1}(\eta^{(n)})\right)_n$ approximates $\eta$ in $\mathcal{H}$, and $\left( \mathcal{P}_{\mathcal{H}^1}(\eta^{(n)})\right)_n \subset\mathcal{S}^1_{loc}$.
\end{proof}

Now Theorem \ref{T:Hcoincide} follows from Lemma \ref{L:proto} and Lemma \ref{L:protodense} because we have $\mathcal{H}\subset\mathcal{H}_{loc}$ and $\mathcal{H}^1\subset\mathcal{H}^1_{loc}$.

\section{Nontriviality of the first \v{C}ech cohomology}\label{S:nontrivial}

The main results of this section are Theorem \ref{T:nontrivial1} and  \ref{T:nontrivial2} below, which state that, roughly speaking, the (real) first \v{C}ech cohomology of $X$ is nontrivial if and only if $\mathcal{H}^1(X)$ is nontrivial.
 
For convenience and to fix notation we briefly recall some basics about \v{C}ech cohomology, \cite{BT82,ES,W71}. Let $\mathcal{U}=\left\lbrace U_\alpha\right\rbrace_{\alpha\in J}$ be a finite open cover of $X$. To $\mathcal{U}$ we assign its \emph{\v{C}ech complex} which by definition is the abstract simplicial complex consisting of all $q$-simplices spanned by $q+1$ distinct elements $\alpha_0,...,\alpha_q$ of $J$ for which
\[U_{\alpha_0\cdots\alpha_q}:=U_{\alpha_0}\cap ... \cap U_{\alpha_q}\]
is non-empty. We fix an orientation and write an ordered $q+1$ tuple $\alpha_0\cdots\alpha_q$ to denote an oriented $q$-simplex $\check{C}^q(\mathcal{U})$ denotes the vector spaces of oriented \v{C}ech $q$-cochains, i.e. functions that assign a real number to each oriented $q$-simplex. A change of orientation of a simplex changes the sign of the function value on this simplex according to the sign of the corresponding permutation of the tuple. For our purposes $q=1$ is most important, therefore we restrict attention to some special cases of more general facts.

The difference operator $d:\check{C}^0(\mathcal{U})\to \check{C}^1(\mathcal{U})$ is defined by
\[df(\alpha_0\alpha_1):=f(\alpha_1)-f(\alpha_0), \ \ f\in C^0(\mathcal{U}).\]
Its image $\check{B}^1(\mathcal{U}):=Im\: d$ is a subspace of $C^1(\mathcal{U})$, the \emph{space of $1$-coboundaries}. 
Similarly, $d:\check{C}^1(\mathcal{U})\to \check{C}^2(\mathcal{U})$ is defined by
\[df(\alpha_0\alpha_1\alpha_2):=f(\alpha_1\alpha_2)-f(\alpha_0\alpha_2)+f(\alpha_0\alpha_1), \ \ f\in C^1(\mathcal{U}).\]
Its kernel $\check{Z}^1(\mathcal{U}):=ker\:d$ is another subspace of $\check{C}^1(\mathcal{U})$, the \emph{space of $1$-cocycles}. The quotient
\[\check{H}^1(\mathcal{U}):=\check{Z}^1(\mathcal{U})/\check{B}^1(\mathcal{U})\]
is called the \emph{space of harmonic $1$-cochains} or \emph{first \v{C}ech cohomology of $\mathcal{U}$}. Given a cocycle $c\in \check{Z}^1(\mathcal{U})$, we denote its cohomology class in $\check{H}^1(\mathcal{U})$ by $[c]$. An open cover $\mathcal{V}=\left\lbrace V_\beta\right\rbrace_{\beta\in I}$ is a refinement of $\mathcal{U}=\left\lbrace U_\alpha\right\rbrace_{\alpha\in J}$, written $\mathcal{V}<\mathcal{U}$, if for each $\beta \in I$ there is some $\alpha=:\pi(\beta)\in J$ such that $V_\beta\subset U_\alpha$. This determines a refining map $\pi: I\to J$, which yields linear maps $\pi:\check{C}^q(\mathcal{U})\to\check{C}^q(\mathcal{V})$ by 
\[\pi(c)(\beta_0\cdots \beta_q):=f(\pi(\beta_0)\cdots\pi(\beta_q)).\]
Since $d\circ \pi=\pi\circ d$, the maps $\pi$ themselves induce well defined linear maps, often called \emph{refining maps},
\[\pi_\mathcal{U}^\mathcal{V}:\check{H}^1(\mathcal{U})\to\check{H}^1(\mathcal{V}),\ \ \pi_\mathcal{U}^\mathcal{V}([c]):=[\pi(c)].\]
We will make use of the following known fact and include a short textbook proof for convenience.
\begin{lemma}\label{L:directlimit}
The maps $\pi_\mathcal{U}^\mathcal{V}: \check{H}^1(\mathcal{U})\to\check{H}^1(\mathcal{V})$ are injective.
\end{lemma}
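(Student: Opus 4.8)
The plan is to argue at the level of cochains: I would show that whenever a $1$-cocycle $c\in\check Z^1(\mathcal U)$ has the property that its refinement $\pi(c)\in\check C^1(\mathcal V)$ is a coboundary, then $c$ itself is already a coboundary on $\mathcal U$. Since $\pi_{\mathcal U}^{\mathcal V}([c])=[\pi(c)]$, this is exactly injectivity. So I would start from the hypothesis $\pi(c)=df$ with $f\in\check C^0(\mathcal V)$, unwound as
\[
c(\pi(\beta_0),\pi(\beta_1))=f(\beta_1)-f(\beta_0)\qquad\text{whenever }V_{\beta_0}\cap V_{\beta_1}\neq\emptyset
\]
(all cochains understood alternating, with the convention $c(\alpha,\alpha)=0$), and then try to manufacture a primitive of $c$ on $\mathcal U$ out of $f$.

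The construction I have in mind: for each $\alpha\in J$ and each $x\in U_\alpha$, pick $\beta\in I$ with $x\in V_\beta$ — possible because $\mathcal V$ covers $X$ — and set $g_\alpha(x):=f(\beta)+c(\pi(\beta),\alpha)$. This makes sense since $x\in U_\alpha\cap V_\beta\subset U_\alpha\cap U_{\pi(\beta)}$, so $\alpha\,\pi(\beta)$ is a $1$-simplex of the \v{C}ech complex of $\mathcal U$. The step I expect to be the crux is independence of the choice of $\beta$: if $x\in V_\beta\cap V_{\beta'}$ then $f(\beta')-f(\beta)=c(\pi\beta,\pi\beta')$, and since $x$ lies in $U_\alpha\cap U_{\pi\beta}\cap U_{\pi\beta'}$ the three indices $\alpha,\pi\beta,\pi\beta'$ span a $2$-simplex; the cocycle relation $dc=0$ on that simplex is precisely the identity needed to conclude $f(\beta)+c(\pi\beta,\alpha)=f(\beta')+c(\pi\beta',\alpha)$. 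Thus $g_\alpha$ is well defined and locally constant on $U_\alpha$ (constant on each $U_\alpha\cap V_\beta$), so $g:=(g_\alpha)_\alpha$ determines an element of $\check C^0(\mathcal U)$.

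Finally I would verify $dg=c$: given a $1$-simplex $\alpha_0\alpha_1$, choose $x\in U_{\alpha_0}\cap U_{\alpha_1}$ and $\beta$ with $x\in V_\beta$; then $x\in U_{\alpha_0}\cap U_{\alpha_1}\cap U_{\pi\beta}$, and applying $dc=0$ to the $2$-simplex $\pi\beta\,\alpha_0\,\alpha_1$ together with the definition of $g$ gives, once the $f(\beta)$-terms cancel, $g_{\alpha_1}(x)-g_{\alpha_0}(x)=c(\pi\beta,\alpha_1)-c(\pi\beta,\alpha_0)=c(\alpha_0,\alpha_1)$. Hence $c=dg\in\check B^1(\mathcal U)$ and $[c]=0$. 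The one genuinely delicate point, besides sign bookkeeping, is the well-definedness of $g_\alpha$ above: it hinges on the triple intersections $U_\alpha\cap U_{\pi\beta}\cap U_{\pi\beta'}$ being nonempty, which is forced by the containments $V_\beta\subset U_{\pi\beta}$, $V_{\beta'}\subset U_{\pi\beta'}$ — this is exactly where the refinement structure enters. (A minor caveat: the construction produces each $g_\alpha$ only as a locally constant function on $U_\alpha$; this is harmless when the cover members are connected, and in general is absorbed into the convention identifying $\check C^0$ of a cover with locally constant functions on its members, which does not affect the directed system. The argument is special to degree one, since the analogous primitive-building step fails for $q\geq2$.)
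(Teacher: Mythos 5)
Your proposal is correct and is essentially the paper's own argument: both build the primitive $0$-cochain by the formula $g_\alpha = f(\beta)+c(\pi(\beta)\,\alpha)$ (the paper's $h(\alpha,\beta)=b(\beta)-c(\alpha\,\pi(\beta))$ up to the alternating sign convention), both establish well-definedness via the cocycle identity on the $2$-simplex $\alpha\,\pi(\beta)\,\pi(\beta')$, and both verify $dg=c$ the same way. Your pointwise formulation, which guarantees $V_\beta\cap V_{\beta'}\neq\emptyset$ when comparing two choices, is if anything slightly more careful than the paper's on the well-definedness step, but it is the same proof.
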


\begin{proof}
Assume $c\in \mathcal{Z}^1(\mathcal{U})$ is such that $\pi(c)=b(\beta')-b(\beta)\in\check{B}^1(\mathcal{V})$ with $b\in\check{C}^0(\mathcal{V})$. Being an element of $\mathcal{Z}^1(\mathcal{U})=ker\:d$, $c$ satisfies
\[c(\pi(\beta)\pi(\beta'))=c(\alpha\pi(\beta'))-c(\alpha\pi(\beta)).\]
Therefore, if we set $h(\alpha,\beta):=b(\beta)-c(\alpha\pi(\beta))$ and define 
\[h(\alpha):=h(\alpha,\beta')  \text{ whenever $U_\alpha\cap V_\beta\neq \emptyset$},\]
$h$ is a well defined element of $\check{C}^0(\mathcal{U})$. Obviously $dh=c$, hence $c\in\check{B}^1(\mathcal{U})$.
\end{proof}

Further, it can be shown that if both $\pi$ and $\sigma$ are refining maps, then $\sigma_\mathcal{U}^\mathcal{V}=\pi_\mathcal{U}^\mathcal{V}$. See \cite{ES} or \cite{W71}. Therefore the spaces $\check{H}^1(\mathcal{U})$ together with the maps $\pi_\mathcal{U}^\mathcal{V}$ form a
direct system along the set of open covers with the refinement relation $<$.
The \emph{first \v{C}ech cohomology $\check{H}^1(X)$ of the space $X$} is the corresponding direct limit 
\[\check{H}^1(X):=\varinjlim_{\mathcal{U}}\check{H}^1(\mathcal{U}).\]
Recall that the direct limit on the right hand side can be obtained by considering the disjoint union $\sqcup_{\mathcal{U}}\check{H}^1(\mathcal{U})$ taken over all possible finite open covers $\mathcal{U}$ of $X$. Two of its elements $h_1\in \check{H}^1(\mathcal{U})$ and $h_2\in \check{H}^1(\mathcal{V})$ are equivalent, $h_1\sim h_2$, if there is a finite open cover $\mathcal{W}$ such that  $\mathcal{W}<\mathcal{V}$, $\mathcal{W}<\mathcal{U}$ and $\pi_\mathcal{U}^\mathcal{W}h_1=\pi_\mathcal{V}^\mathcal{W}h_2 \in \check{H}^1(\mathcal{W})$. The direct limit $\varinjlim_{\mathcal{U}}\check{H}^1(\mathcal{U})$ then is defined as the resulting factor space $(\sqcup_{\mathcal{U}}\check{H}^1(\mathcal{U}))/\sim$.

A set $\mathbb{V}$ of open covers of $X$ is called \emph{cofinal} if every open cover $\mathcal{U}$ of $X$ has a refinement $\mathcal{V}\in\mathbb{V}$. We record another simple fact.

\begin{corollary}\label{C:nontrivial}
Let $\mathbb{V}$ be a cofinal set of open covers of $X$. If $\check{H}^1(X)$ is nontrivial then for some $\mathcal{V}\in\mathbb{V}$ the space $\check{H}^1(\mathcal{V})$ must be nontrivial.
\end{corollary}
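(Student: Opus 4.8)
The plan is to reduce the statement to the injectivity of the refining maps (Lemma~\ref{L:directlimit}) together with the defining property of a cofinal family; the argument is essentially formal once the direct limit formalism is unwound.

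First I would observe that nontriviality of the direct limit $\check{H}^1(X)=\varinjlim_{\mathcal{U}}\check{H}^1(\mathcal{U})$ forces at least one of the groups $\check{H}^1(\mathcal{U})$ appearing in the system to be nontrivial. Indeed, if $\check{H}^1(\mathcal{U})=\{0\}$ for every finite open cover $\mathcal{U}$ of $X$, then $\sqcup_{\mathcal{U}}\check{H}^1(\mathcal{U})$ consists solely of zero classes, all of which are mutually equivalent under $\sim$, so the quotient $(\sqcup_{\mathcal{U}}\check{H}^1(\mathcal{U}))/\!\sim$ is trivial, contradicting the hypothesis. Hence there is a finite open cover $\mathcal{U}_0$ of $X$ with $\check{H}^1(\mathcal{U}_0)\neq\{0\}$; fix a nonzero class $[c]\in\check{H}^1(\mathcal{U}_0)$.

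Next, since $\mathbb{V}$ is cofinal, I would choose $\mathcal{V}\in\mathbb{V}$ with $\mathcal{V}<\mathcal{U}_0$. Then the refining map $\pi_{\mathcal{U}_0}^{\mathcal{V}}\colon\check{H}^1(\mathcal{U}_0)\to\check{H}^1(\mathcal{V})$ is defined (and, as recalled before the corollary, independent of the choice of refining map), and by Lemma~\ref{L:directlimit} it is injective. Therefore $\pi_{\mathcal{U}_0}^{\mathcal{V}}[c]=[\pi(c)]\neq 0$ in $\check{H}^1(\mathcal{V})$, so $\check{H}^1(\mathcal{V})$ is nontrivial, which is the assertion.

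I do not expect a genuine obstacle here: the geometric content lies entirely in Lemma~\ref{L:directlimit}, and the only point deserving attention is the first step, namely that a direct limit of trivial groups is trivial, which is immediate from the explicit description of $\varinjlim_{\mathcal{U}}\check{H}^1(\mathcal{U})$ recorded above.
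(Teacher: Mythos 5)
Your proof is correct, but it takes a slightly different route from the paper's. The paper argues by contraposition: assuming $\check{H}^1(\mathcal{V})=\{0\}$ for every $\mathcal{V}\in\mathbb{V}$, it takes an arbitrary $h\in\check{H}^1(\mathcal{W})$, refines $\mathcal{W}$ by some $\mathcal{V}\in\mathbb{V}$, notes that $\pi_\mathcal{W}^\mathcal{V}h=0$ simply because the target group is trivial, and concludes from the definition of the direct limit that the class of $h$ in $\check{H}^1(X)$ vanishes; injectivity of the refining maps is never invoked. You instead argue directly: you first extract a cover $\mathcal{U}_0$ with $\check{H}^1(\mathcal{U}_0)\neq\{0\}$ (your justification that a direct limit of trivial groups is trivial is fine, though it tacitly uses that any two finite open covers admit a common refinement so that all zero classes are identified), then push a nonzero class into some $\mathcal{V}\in\mathbb{V}$ via cofinality and use Lemma \ref{L:directlimit} to see it survives. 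Your version buys an explicit nonzero element of $\check{H}^1(\mathcal{V})$ and is arguably more transparent, at the cost of depending on the injectivity lemma, which the paper reserves for the proof of Theorem \ref{T:nontrivial1}; the paper's version needs one less ingredient. Both arguments are complete and correct.
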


\begin{proof}
Assume that $\check{H}^1(\mathcal{V})=\left\lbrace 0\right\rbrace$ for all $\mathcal{V}\in\mathbb{V}$. For any  $h\in\sqcup_{\mathcal{U}}\check{H}^1(\mathcal{U})$ there is some open cover $\mathcal{W}$ of $X$ such that $h\in\check{H}^1(\mathcal{W})$. As $\mathbb{V}$ is cofinal, $\mathcal{W}$ has a refinement $\mathcal{V}\in\mathbb{V}$.
By the above assumption, $\pi_\mathcal{W}^\mathcal{V}h=0$ in $\check{H}^1(\mathcal{V})$. If $\mathcal{U}$ is any further refinement of $\mathcal{V}$ then 
\[\pi_\mathcal{V}^\mathcal{U}h=\pi_\mathcal{V}^\mathcal{U}\pi_\mathcal{W}^\mathcal{V}h=0,\]
which implies that the equivalence class of $h$ in $\check{H}^1(X)$ is zero. As $h$ was arbitrary, this argument would imply $\check{H}^1(X)=\left\lbrace 0\right\rbrace$, contradicting the assumption of the lemma.
\end{proof}

Now let $\mathbb{V}_0$ be the set of all finite open covers  $\mathcal{V}=\left\lbrace V_\beta\right\rbrace_{\beta \in I}$ such that
\begin{multline}\label{E:Lebesgue}
\text{ all open sets $V_\beta$ are connected and such that}\\ 
\text{ for distinct $\beta,\beta',\beta''\in I$ we have $V_\beta\cap V_{\beta'}\cap V_{\beta''}=\emptyset.$}
\end{multline}
The one-dimensionality of $X$ implies the following.
\begin{lemma}
The set $\mathbb{V}_0$ of such covers is cofinal.
\end{lemma}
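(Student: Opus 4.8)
The plan is to show that $\mathbb{V}_0$ is cofinal by starting from an arbitrary finite open cover $\mathcal{U}=\{U_\alpha\}_{\alpha\in J}$ and producing a refinement lying in $\mathbb{V}_0$. First I would invoke compactness of $X$ together with the Lebesgue number lemma to fix $\lambda>0$ such that every set of diameter less than $\lambda$ is contained in some $U_\alpha$; it therefore suffices to build a finite open cover by connected sets of diameter less than $\lambda$ meeting the triple-intersection condition in \eqref{E:Lebesgue}. The key point is that $X$ is a compact connected topologically one-dimensional metric space, and one-dimensional spaces admit arbitrarily fine finite open covers of order at most $2$ (i.e. no point lies in three members of the cover) — this is essentially the definition of covering dimension $\le 1$, together with the standard fact that for compact metric spaces covering dimension equals the small inductive dimension. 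So I would first choose, using $\dim X\le 1$, a finite open refinement $\mathcal{W}=\{W_k\}$ of the cover of $X$ by $\lambda/2$-balls such that $\mathcal{W}$ has order $\le 2$, i.e. $W_k\cap W_l\cap W_j=\emptyset$ for distinct $k,l,j$. This already gives the triple-intersection condition, and each $W_k$ has diameter less than $\lambda$.

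The remaining issue is connectedness of the members. I would pass to connected components: for each $k$, write $W_k=\bigsqcup_i W_k^{(i)}$ as the disjoint union of its connected components. Since $W_k$ is open in the (locally connected? — not necessarily) space $X$, the components need not be open, which is the main obstacle. To handle this I would instead argue more carefully at the stage of constructing $\mathcal{W}$: using that $X$ is topologically one-dimensional, every finite open cover has a finite refinement consisting of open sets with topologically zero-dimensional (totally disconnected) boundary — this fact is already invoked in Section~\ref{S:subdomains} (see the discussion preceding Lemma~\ref{L:partition}). Applying it, I may assume each $W_k$ has totally disconnected boundary; then for an open set $W_k$ with $\partial W_k$ totally disconnected one shows that the connected components of $W_k$ are themselves open. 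Indeed, if $W_k$ had a non-open component, its boundary (within $\overline{W_k}$) would contain a non-degenerate connected subset of $\partial W_k$, contradicting total disconnectedness; alternatively one invokes that a one-dimensional space is locally connected away from a zero-dimensional set, or simply that in a compact metric space an open set with zero-dimensional boundary is a disjoint union of open connected "pieces" obtained by separating along the boundary. I would then replace each $W_k$ by the (finitely many, by compactness) open components $W_k^{(i)}$.

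Having replaced $\mathcal{W}$ by the refined cover $\mathcal{W}'=\{W_k^{(i)}\}$ of connected open sets, I observe that refining cannot increase the order: $W_k^{(i)}\cap W_l^{(j)}\subset W_k\cap W_l$, so any triple intersection among members of $\mathcal{W}'$ is contained in a triple intersection among members of $\mathcal{W}$, which is empty; hence \eqref{E:Lebesgue} holds for $\mathcal{W}'$. Each $W_k^{(i)}$ still has diameter less than $\lambda$ and hence sits inside some $U_\alpha$, so $\mathcal{W}'<\mathcal{U}$. Therefore $\mathcal{W}'\in\mathbb{V}_0$ and $\mathbb{V}_0$ is cofinal, which is what we wanted.

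The step I expect to be the genuine obstacle is the one asserting that the connected components of an open set with totally disconnected boundary are open — i.e. cleanly extracting local connectedness from one-dimensionality. This is where the topological-dimension hypothesis is really used, and I would want to cite a precise reference (e.g. Engelking's dimension theory book, already cited as \cite{Eng78}) for the fact that a compact metric space of covering dimension $\le 1$ admits arbitrarily fine finite open covers of order $\le 2$ with members that can be taken connected (or with totally disconnected boundaries, from which openness of components follows). Everything else — the Lebesgue number argument and the "refinement does not increase order" observation — is routine.
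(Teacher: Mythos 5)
Your overall skeleton is exactly the paper's: use covering dimension one to produce a finite open refinement of order at most two, pass to connected components to get connected members (refinement cannot increase the order), and use compactness to extract a finite subcover. The Lebesgue number step is harmless but unnecessary, since one can refine $\mathcal{U}$ directly.

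The step you single out as the genuine obstacle is indeed the crux, but the repair you propose does not work: it is false in general that the connected components of an open set with totally disconnected boundary are open. Take $X$ to be the comb space $([0,1]\times\{0\})\cup(K\times[0,1])$ with $K=\{0\}\cup\{1/n:n\geq 1\}$, a compact connected topologically one-dimensional metric space, and let $W=X\cap(\mathbb{R}\times(1/2,1])=K\times(1/2,1]$. Then $W$ is open and $\partial W=K\times\{1/2\}$ is totally disconnected, yet the component $\{0\}\times(1/2,1]$ of $W$ is not open in $X$, and no finite subfamily of components covers $W$. Your heuristic argument fails because the boundary of a non-open component relative to $\overline{W}$ need not be contained in $\partial W$ (here it is $\{0\}\times[1/2,1]$), and the alternative facts you invoke (one-dimensional spaces are locally connected away from a zero-dimensional set; open sets with zero-dimensional boundary split into open connected pieces) are likewise false for this example, where local connectedness fails along the whole segment $\{0\}\times(0,1]$. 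What is actually needed is local connectedness of $X$ (or of the members of the refinement), which does not follow from topological one-dimensionality alone; to be fair, the paper's own proof asserts without comment that the components are open and that finitely many of them suffice, so it relies on the same unstated input. You have correctly located the delicate point, but your proposed fix does not close it.
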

\begin{proof} 
As $X$ has Lebesgue covering dimension one, an arbitrary open cover of $X$ has a refinement satisfying the intersection property in (\ref{E:Lebesgue}). Considering the connected components of the sets contained in this refinement, we obtain a cover by connected open sets still satisfying the intersection property. Since $X$ is compact, finitely many of these sets suffice to cover $X$. Consequently any open cover of $X$ has a refinement in $\mathbb{V}_0$.
\end{proof}

To prove the theorems below we use the following characterization of exact $1$-forms.
\begin{proposition}\label{P:gradient}
Let $U\subset X$ be open and $f\in\mathcal{B}$. Then $f\otimes \mathbf{1}_U$ is exact if and only if there exists a function $g\in\mathcal{B}$ such that $\widetilde{f}=\widetilde{g}$ q.e. on $U$ and $\widetilde{g}$ is constant q.e. on each connected component of $X\setminus U$.
\end{proposition}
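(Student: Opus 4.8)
The plan is to characterize when the simple $1$-form $f\otimes\mathbf{1}_U=(\partial f)\mathbf{1}_U$ lies in $Im\:\partial$, and the natural strategy is to first decide which element of $\mathcal{B}$ such a primitive would have to be, and then verify that this candidate actually works. First I would prove the ``if'' direction, which is the easy half: suppose $g\in\mathcal{B}$ satisfies $\widetilde f=\widetilde g$ q.e.\ on $U$ and $\widetilde g$ is q.e.\ constant on each connected component of $X\setminus U$. Writing $U^c=X\setminus U$ as the disjoint union of its (relatively) open-in-$U^c$ connected components, one checks that $\Gamma(g)(U^c)=0$: by strong locality, the energy measure of $g$ vanishes on any set on which $\widetilde g$ is q.e.\ locally constant, so $\Gamma(g)$ is concentrated on $U$. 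Consequently $\|\partial g-f\otimes\mathbf{1}_U\|_\mathcal{H}^2=\Gamma(g)(U^c)+\Gamma(f-g)(U)=0$, using $\Gamma(f-g)(U)=0$ because $\widetilde{f}=\widetilde g$ q.e.\ on the open set $U$ and energy measures are insensitive to modification on null sets — more precisely one approximates $\mathbf{1}_U$ from inside by cutoff functions as in Lemma~\ref{L:support} and invokes locality. Hence $\partial g=f\otimes\mathbf{1}_U$, so this form is exact.

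For the ``only if'' direction, suppose $f\otimes\mathbf{1}_U=\partial g$ for some $g\in\mathcal{B}$. Then $\Gamma(g)(U^c)=\|\partial g\mathbf{1}_{U^c}\|_\mathcal{H}^2=\|(f\otimes\mathbf{1}_U)\mathbf{1}_{U^c}\|_\mathcal{H}^2=0$, so $g$ has no energy on $U^c$; the plan is to deduce from this, together with strong locality and connectedness of the components of $U^c$, that $\widetilde g$ is q.e.\ constant on each connected component of $X\setminus U$. This is where I expect the main obstacle: $\Gamma(g)$ vanishing on a closed set $Z=U^c$ does not by itself force $\widetilde g$ to be locally constant there in full generality — one needs the component to be a ``fat'' set in the capacity sense, or one needs to argue via the process $Y$. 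The natural route is probabilistic: if $\Gamma(g)(Z)=0$ then the additive functional associated with $g$ does not increase while $Y$ sits in $Z$, so $\widetilde g(Y_t)$ is constant (q.e.\ starting point) as long as $Y_t\in Z$; since a connected component $C$ of $Z$ cannot be exited by the diffusion without hitting $\partial_{\!X}C\subset U$... but on a topologically one-dimensional compact space one should rather use the maximum-principle machinery already set up (Proposition~\ref{P:Dirichlet}) together with the Dirichlet-subdomain results: restricting $g$ to a neighborhood of $C$ on which it has zero energy, harmonicity plus the maximum principle pins $\widetilde g$ to the constant equal to its (single) boundary value. I would also need $f=g$ q.e.\ on $U$, which follows from $\Gamma(f-g)(U)=0$ plus the spectral gap (so that a function with an energy measure vanishing on a connected open set and agreeing appropriately is determined), or more simply: on $U$ the forms $\partial f$ and $\partial g$ induce the same energy measure density, and since $U$ is open and the Dirichlet form is regular and strongly local, $\widetilde f-\widetilde g$ is q.e.\ locally constant on $U$; absorbing such a locally constant function into $g$ (it does not change $\partial g$) we may assume $\widetilde f=\widetilde g$ q.e.\ on $U$.

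To make the constancy argument rigorous I would localize using the structures of Sections~\ref{S:subdomains} and~\ref{S:locally}. Fix a connected component $C$ of $X\setminus U$. Since $X$ is compact and topologically one-dimensional, $C$ has arbitrarily small connected open neighborhoods $N$ with $\partial N$ zero-dimensional and $\overline N\setminus C\subset U$; on such an $N$ the function $g$ has zero energy (as $\Gamma(g)(N)\le\Gamma(g)(U^c)+\Gamma(g)(U\cap N)$ need not vanish — instead one chooses $N$ so that $N\setminus C$ is thin, or better, one works directly: $\Gamma(g)$ restricted to $C$ is zero, and $C$ is relatively clopen in $U^c$). The cleanest formulation: decompose $U^c=\bigsqcup_j C_j$ into connected components (finitely or countably many, each compact); for each $j$ pick disjoint open $N_j\supset C_j$; because $\Gamma(g)(C_j)=0$ and $g\in\mathcal{F}$, strong locality gives that the quasi-continuous version $\widetilde g$ takes a single value q.e.\ on $C_j$ — this is the standard fact that a function whose energy measure annihilates a set is q.e.\ constant on the connected ``core'' of that set, which here is literally $C_j$ since $C_j$ is connected and $\Gamma(g)(C_j)=0$; rigor comes from approximating $\mathbf{1}_{C_j}$ and using the maximum principle Proposition~\ref{P:Dirichlet}(ii) applied to $g$ harmonic on the (empty-interior-free part of the) complement. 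The remaining bookkeeping — that there are only finitely many components carrying positive capacity, and null components are irrelevant — is routine. Assembling the two directions completes the proof.
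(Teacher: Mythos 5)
Your ``if'' direction (existence of $g$ implies exactness) is exactly the computation the paper writes out: $\left\|\partial g-f\otimes\mathbf{1}_U\right\|_\mathcal{H}^2=\Gamma(f-g)(U)+\Gamma(g)(X\setminus U)=0$. One caveat applies equally to your justification and to the paper's: the step $\Gamma(g)(X\setminus U)=0$ follows from ``$\widetilde g$ is q.e.\ constant on each component'' only when $X\setminus U$ has countably many components (each component then sits in a level set of $\widetilde g$, which carries no energy); if $X\setminus U$ is, say, a Cantor set, the constancy hypothesis is vacuous and the implication fails, so ``locally constant $\Rightarrow$ no energy'' is not a theorem about arbitrary closed sets.

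The genuine gap is in your ``only if'' direction --- which, to be fair, the paper disposes of with the single phrase ``the if-part is clear'' and never argues. You correctly reduce to the two facts $\Gamma(g)(U^c)=0$ and $\Gamma(f-g)(U)=0$, but the two steps needed to convert these into the stated conclusion are left as plans resting on incorrect citations. First, you invoke ``the standard fact that a function whose energy measure annihilates a set is q.e.\ constant on the connected core of that set''; the standard strong-locality fact is for \emph{open} connected sets, whereas a component $C$ of $U^c$ is closed and may have empty interior, and none of your three proposed routes (additive functionals, the maximum principle of Proposition~\ref{P:Dirichlet}(ii), localization via Section~\ref{S:subdomains}) is actually carried through --- you visibly switch horses mid-sentence twice. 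Second, the claim that ``absorbing such a locally constant function into $g$ \dots does not change $\partial g$'' is false: $\Gamma(f-g)(U)=0$ gives only that $\widetilde f-\widetilde g$ is q.e.\ constant on each component of $U$, and if $c:=f-g$ is locally but not globally constant on $U$, then any extension of $c$ to an element of $\mathcal{F}$ has $\partial c\neq 0$ in general, so replacing $g$ by $g+c$ changes the form; producing a modified primitive that is simultaneously equal to $f$ q.e.\ on $U$ \emph{and} still q.e.\ constant on the components of $U^c$ \emph{and} still in $\mathcal{B}$ is precisely the content that needs proof. As submitted, the converse implication is a programme rather than an argument.
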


\begin{proof}
The if-part is clear. For the converse implication, assume there is some $g\in\mathcal{B}$ that equals $f|_U$ on $U$ q.e. and is constant q.e. on each connected component of $X\setminus U$. Then
\begin{align}
\left\|h\otimes \mathbf{1}_U-g\otimes\mathbf{1}\right\|_\mathcal{H}^2&=\left\|(h-g)\otimes\mathbf{1}_U+g\otimes\mathbf{1}_{X\setminus U}\right\|_\mathcal{H}^2\notag\\
&=\Gamma(h-g)(U)+\Gamma(g)(X\setminus U)\notag\\
&=0.
\end{align}
\end{proof}

The first main result of this section is the following.
\begin{theorem}\label{T:nontrivial1}
Let $X$ be compact and topologically one-dimensional. If $\mathcal{H}^1(X)$ is nontrivial then $\check{H}^1(X)$ is nontrivial.
\end{theorem}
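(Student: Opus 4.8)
The plan is to prove the contrapositive in a contravariant form: assuming $\check{H}^1(X)$ is trivial, we build, for every $1$-form $\omega\in\mathcal{H}^1(X)$ and every cover $\mathcal{V}\in\mathbb{V}_0$, a global function whose exact $1$-form agrees with $\omega$ locally, and then use denseness to conclude $\omega=0$. The starting point is Theorem \ref{T:Hcoincide}(ii): locally harmonic $1$-forms are dense in $\mathcal{H}^1(X)$, so it suffices to show every locally harmonic $\omega$ is zero. Fix such an $\omega$ with cover $\mathcal{U}=\{U_\alpha\}$ and harmonic $h_\alpha\in\mathcal{B}$ satisfying $\omega\mathbf{1}_{U_\alpha}=\partial h_\alpha\mathbf{1}_{U_\alpha}$. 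Passing to a common refinement, I may assume $\mathcal{U}\in\mathbb{V}_0$, i.e. each $U_\alpha$ is connected and triple intersections are empty.

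\textbf{Constructing a \v Cech cocycle from $\omega$.} On a nonempty double overlap $U_\alpha\cap U_\beta$ we have $\partial(h_\alpha-h_\beta)\mathbf{1}_{U_\alpha\cap U_\beta}=0$, which by the definition of the norm on $\mathcal{H}$ means $\Gamma(h_\alpha-h_\beta)(U_\alpha\cap U_\beta)=0$; since $U_\alpha\cap U_\beta$ is open and $\mathcal E$ is strongly local, $h_\alpha-h_\beta$ is locally constant q.e. on $U_\alpha\cap U_\beta$ (here the connectedness of components is what makes "locally constant" into "constant on each component" — I will want the overlaps, or at least their relevant components, connected, so a further refinement within $\mathbb{V}_0$ keeping overlaps connected may be needed). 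Set $c(\alpha\beta):=$ the constant value of $\widetilde{h_\beta}-\widetilde{h_\alpha}$ on $U_\alpha\cap U_\beta$. On a triple overlap $c$ would satisfy the cocycle identity $c(\beta\gamma)-c(\alpha\gamma)+c(\alpha\beta)=0$; since triple overlaps are empty in $\mathbb V_0$ the identity is vacuous and $c\in\check Z^1(\mathcal U)$ trivially. Because $\check H^1(X)$ is trivial, Corollary \ref{C:nontrivial} (with $\mathbb V_0$ cofinal) gives that after refining to some $\mathcal V\in\mathbb V_0$ the class of $c$ vanishes, i.e. on $\mathcal V=\{V_\beta\}$ the refined cocycle is a coboundary: there are constants $(b_\beta)$ with the pairwise differences of the (restricted) $h$'s equal to $b_{\beta'}-b_\beta$ on overlaps.

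\textbf{Gluing to a global function.} On the refined cover, replace each local harmonic function $h_{\pi(\beta)}$ by $g_\beta:=h_{\pi(\beta)}|_{V_\beta}-b_\beta$ (a function still agreeing, up to the additive constant, with $\omega$ via $\partial g_\beta\mathbf{1}_{V_\beta}=\omega\mathbf{1}_{V_\beta}$). By construction $g_\beta=g_{\beta'}$ q.e. on $V_\beta\cap V_{\beta'}$, so the $g_\beta$ patch together to a single q.e.-defined function $g$ on $X$; using an energy-finite partition of unity subordinate to $\mathcal V$ (Lemma \ref{L:partition2}) and the Leibniz rule, one checks $g\in\mathcal F$ and in fact $g\in\mathcal B$ (boundedness is inherited since the $h_\alpha$ are bounded and the cover is finite), with $\partial g=\omega$. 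Hence $\omega\in Im\,\partial$. But $\omega\in\mathcal H^1(X)=(Im\,\partial)^\bot$, so $\omega=0$. Since locally harmonic forms are dense in $\mathcal H^1(X)$, this forces $\mathcal H^1(X)=\{0\}$, contradicting nontriviality; equivalently, nontriviality of $\mathcal H^1(X)$ forces nontriviality of $\check H^1(X)$.

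\textbf{Main obstacle.} The delicate point is the passage from "$\Gamma(h_\alpha-h_\beta)$ vanishes on the overlap" to a genuine \emph{constant} assigned to the pair $(\alpha,\beta)$, and then the compatibility of these constants under refinement — one must arrange (by choosing the cover in $\mathbb V_0$ with connected overlaps, or by working component-by-component) that "locally constant q.e." really yields a well-defined number, and track carefully that the refining maps send this concretely-built cocycle to the concretely-built cocycle of the refinement, so that Corollary \ref{C:nontrivial} genuinely applies. A second, more technical nuisance is the regularity bookkeeping showing the glued $g$ lies in $\mathcal B$ and that $\partial g=\omega$ globally (not just on each $V_\beta$); this is where Lemma \ref{L:support}, the partition of unity, and the denseness of $\mathcal S$ in $\mathcal H$ (Remark \ref{R:secondremark}) do the work, much as in the proof of Lemma \ref{L:inclusionlocal}.
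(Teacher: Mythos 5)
Your route (build a \v{C}ech $1$-cocycle from the constants by which the local primitives $h_\alpha$ disagree on overlaps, kill it using triviality of $\check{H}^1(X)$, and glue to a global primitive) is the classical \v{C}ech--de Rham argument and is genuinely different from the paper's proof. But it has a real gap, and it is exactly the one you flag as the ``main obstacle'': the passage from $\Gamma(h_\alpha-h_\beta)(U_\alpha\cap U_\beta)=0$ to ``$\widetilde{h_\alpha}-\widetilde{h_\beta}$ is constant q.e.\ on each connected component of $U_\alpha\cap U_\beta$.'' Under the paper's standing assumptions (strongly local regular form with spectral gap and absolutely continuous kernels) this implication is not available: vanishing of the energy measure on a connected open set $G$ forces constancy only if $G$ cannot be disconnected by a set of zero capacity (equivalently, only under a local irreducibility property of the part of the form on $G$). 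This is precisely the capacity-on-disconnecting-sets hypothesis that the paper introduces for the \emph{converse} statement, Theorem \ref{T:nontrivial2}, and which Proposition \ref{P:sufficient} derives from irreducibility of $(\mathcal{E}^U,\mathcal{F}^U)$. Theorem \ref{T:nontrivial1}, however, is asserted with no such hypothesis, so your argument as written either proves a weaker statement or leaves an unproved potential-theoretic step; ``connected overlaps'' in a $\mathbb{V}_0$-refinement do not repair it, since topological connectedness is not the relevant notion here. (A second, minor point: Corollary \ref{C:nontrivial} goes in the wrong direction for your purposes; what you need is that injectivity of the refining maps, Lemma \ref{L:directlimit}, makes each $\check{H}^1(\mathcal{U})$ inject into the direct limit, so triviality of $\check{H}^1(X)$ already forces $[c]=0$ in $\check{H}^1(\mathcal{U})$ without refining. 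Given that, the partition-of-unity gluing and the verification $\partial g=\omega$ are fine.)

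The paper avoids this issue by not working with locally harmonic forms at all. Instead it tests exactness on the dense subspace $\mathcal{S}_{loc}$, whose generators $\partial f\mathbf{1}_V$ come equipped \emph{by construction} with $f\in\mathcal{S}^{\partial V}$ locally constant q.e.\ on a finite open cover of the zero-dimensional boundary $\partial V$; no deduction of constancy from vanishing energy measure is ever needed. It then builds one explicit cover $\left\lbrace V,W,U_1,\dots,U_N\right\rbrace$ with pairwise-only intersections, uses Lemma \ref{L:directlimit} to see that its \v{C}ech complex is a tree when $\check{H}^1(X)=0$, concludes that the $U_i$ lie in distinct components of $X\setminus V$, and invokes Proposition \ref{P:gradient} to exhibit $\partial f\mathbf{1}_V$ as a gradient. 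If you want to salvage your argument, the natural fix is to run your cocycle construction on $\mathcal{S}_{loc}$ rather than on locally harmonic forms, using the given locally constant structure of $f$ near $\partial V$ to define the constants --- but at that point you have essentially reconstructed the paper's proof.
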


\begin{proof}
Assume $\check{H}^1(X)$ is trivial. Let $\partial f\mathbf{1}_V \in \mathcal{S}_ {loc}$ be arbitrary. Recall that according to the definition of $\mathcal{S}_{loc}$, the set $V\subset X$ is open and has a zero-dimensional boundary $\partial V$ admitting a finite open cover on which $f$ is locally constant. $\partial V$ being zero-dimensional, the cover admits a finite refinement consisting of disjoint open sets $U_1,...,U_N$. 

Put $V':=V\cup \bigcup_{i=1}^N U_i$. We claim that $(V')^c$ and $\overline{V}$ are disjoint. In fact, we have $\partial V\subset \bigcup_{i=1}^N U_i$ and the sets $\partial V$ and $\partial \left(\bigcup_{i=1}^N U_i\right) \subset \bigcup_{i=1}^N \partial U_i$ are disjoint: If there were some $x\in \partial V\cap \partial U_i$ then there had to be some $k\neq i$ such that $x\in U_k$ resulting
in $U_i\cap U_k\neq \emptyset$, which is impossible. Accordingly $\partial V'$ and $\partial V$ are disjoint, what implies our previous claim.

Let $W$ be an open neighborhood of $(V')^c$ such that $W\cap V=\emptyset$. Then $\mathcal{U}:=\left\lbrace V,W,U_1,...,U_N\right\rbrace$ is an open cover of $X$. As a consequence of Lemma \ref{L:directlimit}, $\check{H}^1(\mathcal{U})$ must be trivial. Since by construction no more than two sets of $\mathcal{U}$ have non-empty intersection, this means its \v{C}ech complex is a graph and does not contain a cycle, i.e. it is a tree. Therefore the sets $U_1,..., U_N$ belong to different connected components of $X\setminus V$. But then $\widetilde{f}$ possesses a continuation to all of $X$ which is constant q.e. on each of these components and according to Proposition \ref{P:gradient} $\partial f\mathbf{1}_V=f\otimes \mathbf{1}_V$ is a gradient. 

Since $\partial f\mathbf{1}_V$ was an arbitrary element of $\mathcal{S}_{loc}$, any member of the latter space is a gradient, and its closure is $Im\:\partial=\mathcal{H}$, leaving $\mathcal{H}^1=\left\lbrace 0\right\rbrace$.
\end{proof}

Our proof of the converse implication needs an additional assumption. Let $U\subset X$ be a connected open set. A set $D\subset U$ will be called \emph{disconnecting for $U$} if $U$ decomposes into a disjoint union
\[U=U_1\cup U_2\cup D\]
with $U_1$ and $U_2$ open. In other words, the removal of $D$ turns the connected set $U$ into the disconnected set $U_1\cup U_2$. If $\mathbb{V}$ is a set of covers consisting of connected open sets, a set $D$ will be called \emph{disconnecting for $\mathbb{V}$} if it is disconnecting for any connected open set in any of the covers of $\mathbb{V}$.

\begin{theorem}\label{T:nontrivial2}
Let $X$ be compact and topologically one-dimensional. Assume that there exists a cofinal set $\mathbb{V}$ of finite open covers $\mathcal{V}=\left\lbrace V_\beta\right\rbrace_{\beta \in I}$ satisfying (\ref{E:Lebesgue}) and such that any set $D$ which is disconnecting for $\mathbb{V}$ has positive capacity, $\cpct(D)>0$. Then if $\check{H}^1(X)$ is nontrivial also $\mathcal{H}^1(X)$ must be nontrivial.
\end{theorem}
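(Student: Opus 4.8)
\textbf{Proof proposal for Theorem \ref{T:nontrivial2}.}

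The plan is to construct, from a nontrivial \v{C}ech cohomology class, a nontrivial locally harmonic $1$-form, which by Lemma \ref{L:inclusionlocal} lies in $\mathcal{H}^1(X)$. Since the covers in $\mathbb{V}_0$ form a cofinal set, and $\mathbb{V}$ is also cofinal, by passing to a common refinement I may work with a single finite open cover $\mathcal{V}=\{V_\beta\}_{\beta\in I}$ belonging to $\mathbb{V}\cap\mathbb{V}_0$: its members are connected, no three of them meet, and every set disconnecting one of its members has positive capacity. By Corollary \ref{C:nontrivial} I may assume $\check{H}^1(\mathcal{V})\neq\{0\}$, so I can fix a \v{C}ech $1$-cocycle $c\in\check{Z}^1(\mathcal{V})$ which is not a coboundary. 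The \v{C}ech complex of such a $\mathcal{V}$ is a graph $G$ (vertices $=$ the $V_\beta$, edges $=$ the nonempty pairwise intersections), and $c$ being a non-exact cocycle means $c$ represents a nonzero element of the simplicial $H^1(G)$, i.e. $G$ contains a cycle along which $c$ has nonzero period.

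The key step is to realize $c$ by a $1$-form. For each nonempty intersection $V_\beta\cap V_{\beta'}$, the cocycle assigns a real number $c(\beta\beta')$; the idea is to build functions $f_\beta\in\mathcal{B}$ defined up to additive constants on each $V_\beta$, consistent on overlaps in the sense that on $V_\beta\cap V_{\beta'}$ the difference $\widetilde f_{\beta'}-\widetilde f_\beta$ equals the constant $c(\beta\beta')$ q.e. Concretely: pick an energy-finite partition of unity $\{\varphi_\beta\}$ subordinate to $\mathcal{V}$ (Lemma \ref{L:partition2}), choose cutoffs supported near each overlap, and assemble a global function-like object whose ``gradient pieces'' $\partial f_\beta$ patch to a well-defined $1$-form $\omega$ with $\omega\mathbf{1}_{V_\beta}=\partial f_\beta\mathbf{1}_{V_\beta}$. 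The cocycle condition $df=0$ on triple intersections is exactly what guarantees this patching is consistent; because no three $V_\beta$ meet, the only constraints are on pairs, which makes the construction essentially a matter of solving difference equations along the graph $G$. To make $\omega$ \emph{locally harmonic} rather than merely locally exact, I replace each $f_\beta$ by the harmonic function on $V_\beta$ with the same boundary behavior — using Definition \ref{def-1} and the projection $\mathcal{P}_{\mathcal{H}^{\partial V_\beta}}$ — noting that modifying $f_\beta$ by a function in $\mathcal{F}_{V_\beta^c}$ (hence vanishing q.e. on $\partial V_\beta$) does not change the patching constants on overlaps; this is precisely the technique already used in the proof of Lemma \ref{L:proto}(ii).

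Finally I must show $\omega\neq 0$, and this is where the capacity hypothesis enters — I expect it to be the main obstacle. Suppose $\omega=0$. Then for each $\beta$, $\partial f_\beta\mathbf{1}_{V_\beta}=0$, so $\Gamma(f_\beta)(V_\beta)=0$, meaning $\widetilde f_\beta$ is locally constant q.e. on $V_\beta$ in the energy sense; since $V_\beta$ is connected, I want to conclude $\widetilde f_\beta$ is constant q.e. on $V_\beta$. This is exactly the implication that can fail without a capacity-type condition: $V_\beta$ connected plus $\Gamma(f_\beta)(V_\beta)=0$ forces $f_\beta$ constant q.e. only if $V_\beta$ cannot be split by a capacity-zero set — which is what the hypothesis ``every disconnecting set for $\mathbb{V}$ has positive capacity'' supplies (a set on which $\widetilde f_\beta$ jumps would disconnect $V_\beta$ into $\{\widetilde f_\beta<t\}$ and $\{\widetilde f_\beta>t\}$ modulo a null set). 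Once each $\widetilde f_\beta$ is constant q.e., the overlap relations $\widetilde f_{\beta'}-\widetilde f_\beta=c(\beta\beta')$ say that the assignment $\beta\mapsto(\text{that constant})$ is a $0$-cochain $h\in\check C^0(\mathcal{V})$ with $dh=c$, forcing $c\in\check B^1(\mathcal{V})$ — contradiction. Hence $\omega\neq0$, $\mathcal{H}^1_{loc}\ni\omega\neq0$, and by Lemma \ref{L:inclusionlocal} (or Theorem \ref{T:Hcoincide}(ii)) $\mathcal{H}^1(X)\neq\{0\}$. The delicate points to get right are the measurability/quasi-continuity bookkeeping when passing between $f_\beta$ and $\widetilde f_\beta$, and verifying carefully that the disconnecting set produced from a jump of $\widetilde f_\beta$ genuinely falls under the stated hypothesis (i.e. is disconnecting for a cover in $\mathbb{V}$ after a suitable refinement).
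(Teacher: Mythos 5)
Your overall strategy --- realize a non-coboundary \v{C}ech cocycle $c\in\check{Z}^1(\mathcal{V})$ as a locally exact $1$-form by the partition-of-unity construction (which does work here, since the covers satisfying (\ref{E:Lebesgue}) have no triple intersections, so $f_\beta=\sum_\gamma c(\gamma\beta)\varphi_\gamma$ has the prescribed constant differences on overlaps), and then argue that exactness of this form would force $c\in\check{B}^1(\mathcal{V})$ --- is a genuinely different route from the paper's, which instead fixes a single cycle $V_{\beta_0},\dots,V_{\beta_N}$ in the \v{C}ech graph, takes one cutoff $\varphi$ equal to $1$ on $\overline{V_{\beta_0}\cap V_{\beta_1}}$ and $0$ on $\overline{V_{\beta_0}\cap V_{\beta_N}}$, and shows via Proposition \ref{P:gradient} that the single simple tensor $\partial\varphi\mathbf{1}_{V_{\beta_0}}$ is not exact. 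Two steps of your version have genuine gaps, though. The first is the harmonization: replacing each $f_\beta$ by a \emph{separate} harmonic modification on $V_\beta$ (your $\mathcal{P}_{\mathcal{H}^{\partial V_\beta}}$) changes $f_\beta$ on all of $V_\beta$, hence on the overlaps $V_\beta\cap V_{\beta'}\subset V_\beta$; the differences there are then no longer the constants $c(\beta\beta')$, and the local data $\partial h_\beta\mathbf{1}_{V_\beta}$ no longer agree on overlaps, so they do not define a single form. The device in Lemma \ref{L:proto}(ii), which you cite, is different: one subtracts a single \emph{global} exact part $\partial g=\mathcal{P}_{Im\:\partial}\,\omega$, which leaves every difference $f_{\beta'}-f_\beta$ untouched. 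With that fix the step is sound (or can be skipped entirely by showing directly that $\omega\notin Im\:\partial$).

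The more serious gap is the assertion that $\Gamma(f_\beta)(V_\beta)=0$, connectedness of $V_\beta$, and the capacity hypothesis force $\widetilde{f_\beta}$ to be constant q.e.\ on $V_\beta$. Your justification is that a jump of $\widetilde{f_\beta}$ would disconnect $V_\beta$ into $\{\widetilde{f_\beta}<t\}$ and $\{\widetilde{f_\beta}>t\}$; but these level sets are only quasi-open, whereas the hypothesis of the theorem only controls sets $D$ for which $V_\beta\setminus D$ splits into two \emph{open} pieces. Nothing in your sketch converts a quasi-open splitting into an open one, and the implication ``$\Gamma(u)(V)=0$ on a connected open $V$ implies $u$ constant q.e.\ on $V$'' is a local irreducibility statement that the stated capacity condition is not known to imply (the paper proves only the reverse direction, irreducibility $\Rightarrow$ capacity condition, in Proposition \ref{P:sufficient}). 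This is exactly the difficulty the paper's proof is built to avoid: there the positive-capacity sets are the concrete pieces $\partial V_{\beta_0}\cap V_{\beta_1}$ and $\partial V_{\beta_0}\cap V_{\beta_N}$, which honestly disconnect $V_{\beta_1}$ and $V_{\beta_N}$ in the open sense, and the contradiction comes from the quasi-continuity of a hypothetical global primitive (constant on a connected component of $X\setminus V_{\beta_0}$ yet approaching $1$ and $0$ from inside $V_{\beta_0}$), not from a ``zero energy implies constant'' principle on the cover elements. To complete your argument you would either have to prove that constancy principle under the stated hypothesis or restructure the nontriviality step along the paper's lines.
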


\begin{remark}\mbox{}
\begin{enumerate}
\item[(i)] If $(\mathcal{E},\mathcal{F})$ is a resistance form 
in the sense of Kigami \cite{Ki01,Ki03,Ki12} then points have positive capacity and so the hypothesis in Theorem \ref{T:nontrivial2} obviously holds. 
This is the case if, for instance,  
the spectral dimension $d_S$ exists and is less than $2$.
\item[(ii)]  Theorem \ref{T:nontrivial2} also applies to certain classes of self-similar and cell-structured sets which carry a diffusion that admits transition densities. Examples include generalized Sierpinski carpets \cite{BB99,BBKT}. Note that in these cases the spectral dimension $d_S$ may even be greater than or equal to $2$.
\end{enumerate}
\end{remark}

\begin{proof}
By Corollary \ref{C:nontrivial} there is a cover $\mathcal{V}\in\mathbb{V}$ such that $\check{H}^1(\mathcal{V})$ is nontrivial. The \v{C}ech graph of this cover $\mathcal{V}$ must contain a cycle, i.e. there must be covering sets $V_{\beta_0}, V_{\beta_1},...V_{\beta_N} \in\mathcal{V}$ such that $V_{\beta_i}\cap V_{\beta_{i+1}}\neq \emptyset$, $i=0,...,N$, where $\beta_{N+1}:=\beta_0$.
We may assume 
\begin{equation}\label{E:firstintersection}
V_{\beta_0}\cap \overline{V_{\beta_0\beta_1}}\cap \overline{V_{\beta_0\beta_N}}=\emptyset.
\end{equation} 
For if $x_1\in V_{\beta_0}\cap\overline{V_{\beta_1}}\cap \overline{V_{\beta_N}}$, then there is a base set $V_1$ containing $x$ and such that $\overline{V_1}\subset V_{\beta_0}$. If now $V_{\beta_0}\cap\overline{V_{\beta_1}\setminus V_1}\cap \overline{V_{\beta_N}}=\emptyset$ put $V_2:=\emptyset$. If there is some $x_2\in V_{\beta_0}\cap\overline{V_{\beta_1}\setminus V_1}\cap \overline{V_{\beta_N}}$, choose another base set $V_2$ containing $x_2$ and such that $\overline{V_2}\subset V_{\beta_0}$. Now proceed further by induction: If $V_{\beta_0}\cap\overline{V_{\beta_1}\setminus \bigcup_{j=1}^k V_k}\cap \overline{V_{\beta_N}}=\emptyset$ then put $V_{k+1}=\emptyset$, if a point $x_{k+1}$ is contained, choose a base set $V_{k+1}$ containing $x_{k+1}$ and such that $\overline{V_{k+1}}$ is in $V_{\beta_0}$. Finally replace $V_{\beta_1}$ by $V_{\beta_1}\setminus \bigcup_{j=1}^\infty V_j$ and denote it again by $V_{\beta_1}$. As any point of the space must be contained in one of the base sets, (\ref{E:firstintersection}) now holds. On the other hand the new system of open sets still covers $X$. Note also that this modification does not alter the set $(\bigcup_{k=1}^N V_{\beta_k})\setminus V_{\beta_0}$.

We have 
\begin{equation}\label{E:secondintersection}
\partial V_{\beta_0}\cap \overline{V_{\beta_0\beta_1}}\cap \overline{V_{\beta_0\beta_N}}=\emptyset:
\end{equation}
Clearly the set $\partial V_{\beta_0}\cap V_{\beta_0\beta_1}\cap \overline{V_{\beta_0\beta_N}}$ is empty. For any $x\in \partial V_{\beta_0}\cap \partial V_{\beta_0\beta_1}\cap V_{\beta_0\beta_N}$ we could find an open neighborhood $V_x\subset V_{\beta_0\beta_2}$ which intersects $V_{\beta_0\beta_2}$, which contradicts (\ref{E:Lebesgue}). Any $x\in \partial V_{\beta_0}\cap \partial V_{\beta_0\beta_1}\cap \partial V_{\beta_0\beta_N}$ would have to be contained in some other set $V_{\beta'}$ of the cover, causing a similar contradiction.

Clipping (\ref{E:firstintersection}) and (\ref{E:secondintersection}), $\overline{V_{\beta_0\beta_1}}$ and $\overline{V_{\beta_0\beta_N}}$ are seen to be disjoint compact subsets of $X$. Therefore we can find an open set $W\supset \overline{V_{\beta_0\beta_1}}$, disjoint from $\overline{V_{\beta_0\beta_N}}$, and a function $\varphi\in C_0(X)\cap\mathcal{F}$ such that $0\leq \varphi\leq 1$, $\varphi\equiv 1$ on $\overline{V_{\beta_0\beta_1}}$ and zero outside $W$. In particular, 
$\varphi\equiv 1$ on $\partial V_{\beta_0}\cap V_{\beta_1}$ and $\varphi\equiv 0$ on $\partial V_{\beta_0}\cap V_{\beta_N}$. 
These two sets belong to the same connected component $K$ of $X\setminus V_{\beta_0}$. 

Now assume there is a function $f\in\mathcal{B}$ such that for its quasi-continuous Borel version $\widetilde{f}$ we have
$\widetilde{f}=\varphi$ q.e. on $V_{\beta_0}$ and $\widetilde{f}$ is constant q.e. on any connected component of $X\setminus U$. Then there exist a set $N\subset\bigcup_{i=1}^N V_{\beta_k}$ of zero capacity and a constant $c\in\mathbb{R}$ such that $\widetilde{f}\equiv c$ on $\bigcup_{i=1}^N V_{\beta_k}\setminus (V_{\beta_0}\cup N)$, $\widetilde{f}\equiv 1$ on $V_{\beta_0\beta_1}\setminus N$ and $\widetilde{f}\equiv 0$ on $V_{\beta_0\beta_N}$. In particular, $\widetilde{f}$ must be discontinuous on at least one of the sets $(\partial V_{\beta_0}\cap V_{\beta_1})\setminus N$ and $(\partial V_{\beta_0}\cap V_{\beta_N})\setminus N$. As the sets $\partial V_{\beta_0}\cap V_{\beta_1}$ and $\partial V_{\beta_0}\cap V_{\beta_N}$ are disconnecting (for $V_{\beta_1}$ and $V_{\beta_N}$, respectively), they both are of positive capacity, what contradicts the quasi-continuity of $\widetilde{f}$.

Consequently $\partial\varphi\mathbf{1}_{V_{\beta_0}}$ is not a gradient but a nontrivial element of $\mathcal{H}^1$. 
\end{proof}

A coarse sufficient condition for the validity of capacity condition in Theorem \ref{T:nontrivial2} can be formulated in terms of the irreducibility of restricted Dirichlet forms. Given an non-empty open subset $U\subset X$, let $\mathcal{F}^U$ denote the $\mathcal{E}_1$-closure of $C_0(U)\cap \mathcal{F}$ and write 
\[\mathcal{E}^U:=\mathcal{E}|_{\mathcal{F}^U}.\]
Then $(\mathcal{E}^U,\mathcal{F}^U)$ is a regular Dirichlet form, referred to as the \emph{restriction of $(\mathcal{E},\mathcal{F})$ to $U$} (with Dirichlet boundary conditions). The associated strongly continuous symmetric Markovian semigroup on $L_2(U,m)$ is denoted by $(P_t^U)_{t\geq 0}$, often called the \emph{killed semigroup on $U$}. The symmetric Hunt process associated with $(\mathcal{E}^U,\mathcal{F}^U)$ and $(P_t^U)_{t\geq 0}$ is $(Y_t^U)_{t\geq 0}$, defined by
\[Y_t^U:=\begin{cases} Y_t,\ t<\tau_U\\ \Delta,\ t\geq \tau_U,\end{cases}\]
where $\Delta$ is the point at infinity in the one-point compactification of $X$ or, if $X$ is already compact, an adjoined isolated point. A function $f$ on $X$ or a subset of $X$ is set to be zero at $\Delta$, $f(\Delta):=0$, cf. \cite{Ch82, FOT}.
A Borel set $A\subset U$ is an \emph{invariant set for $(\mathcal{E}^U,\mathcal{F}^U)$} if 
\[P_t^U(\mathbf{1}_Af)(x)=0\ \ \text{for $m$-a.e. $x\in U\setminus A$}\]
for any $f\in L_2(U,m)$ and $t>0$. $(\mathcal{E}^U,\mathcal{F}^U)$ is called \emph{irreducible} if every invariant subset $A$ for $(\mathcal{E}^U,\mathcal{F}^U)$ is trivial, i.e. if $m(A)=0$ or $m(U\setminus A)=0$. For $(\mathcal{E},\mathcal{F})$ itself invariance and irreducibility are defined in an analogous manner. See \cite{CF,FOT} for further details.

For $U\subset X$ open let $\cpct^U$ denote the capacity with respect to $(\mathcal{E}^U,\mathcal{F}^U)$ in the sense of (\ref{E:capacity}), that is
\[\cpct^U(A)=\inf\left\lbrace \mathcal{E}_1(f):f\in\mathcal{F}^U: \widetilde{f}\geq 1 \text{ $m$-a.e on $A$}\right\rbrace\]
for $A\subset U$ open, and for general $B\subset U$,
\[\cpct^U(B)=\inf\left\lbrace \cpct^U(A): \text{ $A\subset U$ open, $B\subset A$}\right\rbrace .\]
For a Borel set $B\subset U$ let $\sigma^U_B$ denote the first hitting time of $B$ by the restricted process $Y^U$,
\[\sigma_B^U=\inf\left\lbrace t>0: Y_t^U\in B\right\rbrace.\]

\begin{lemma}\label{L:cap}
Let $B\subset U$ be a Borel set. Then $\cpct^U(B)>0$ implies $\cpct(B)>0$.
\end{lemma}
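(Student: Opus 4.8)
The statement to prove is Lemma \ref{L:cap}: for a Borel set $B\subset U$ with $U\subset X$ open, $\cpct^U(B)>0$ implies $\cpct(B)>0$. The natural approach is the contrapositive: assume $\cpct(B)=0$ and deduce $\cpct^U(B)=0$. Equivalently, I would show $\cpct^U(B)\le \cpct(B)$, or at least that vanishing of the global capacity forces vanishing of the restricted one. The key point is that $\mathcal{F}^U$ is the $\mathcal{E}_1$-closure of $C_0(U)\cap\mathcal{F}$ and is a closed subspace of $\mathcal{F}$, so a competitor for $\cpct(B)$ need not lie in $\mathcal{F}^U$; one must localize it near $B$ before it can serve as a competitor for $\cpct^U(B)$.

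\textbf{Step 1: reduce to the case $B\subset V$ with $\overline{V}\subset U$ compact.} Since $\cpct$ is an outer capacity in the sense of (\ref{E:capacity}) and is countably (or at least finitely) subadditive, I would cover $B$ by finitely many open sets $V_j$ each with $\overline{V_j}\subset U$ — possible because $U$ is open and $X$ is a (compact, hence locally compact) metric space — and treat $B\cap V_j$ separately, using $\cpct^U(B)\le\sum_j \cpct^U(B\cap V_j)$. So it suffices to show: if $\overline{V}\subset U$, $B\subset V$, and $\cpct(B)=0$, then $\cpct^U(B)=0$.

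\textbf{Step 2: truncate a near-optimal global competitor by a cutoff function.} Let $\varepsilon>0$. By definition of $\cpct(B)$ via (\ref{E:capacity}) there is an open set $A$ with $B\subset A$ and $\cpct(A)<\varepsilon$, and then a function $u\in\mathcal{F}$ with $u\ge 1$ $m$-a.e.\ on $A$ and $\mathcal{E}_1(u)<\varepsilon$; by the Markovian property we may take $0\le u\le 1$, and we may replace $A$ by $A\cap V$ so that $B\subset A\subset V$. By Remark \ref{R:specialstandard} (special standard core) pick $\psi\in C(X)\cap\mathcal{F}$ with $0\le\psi\le 1$, $\psi\equiv 1$ on $\overline{V}$, and $\supp\psi\subset U$; in particular $\psi u\in C_0(U)\cap\mathcal{F}\subset\mathcal{F}^U$ — here I use that $\mathcal{B}=\mathcal{F}\cap L_\infty(X,m)$ is an algebra and that multiplication by a bounded energy-finite function is bounded in the relevant sense. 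Since $\psi\equiv 1$ on $V\supset A$, we have $\psi u\ge 1$ $m$-a.e.\ on $A$, so $\psi u$ is an admissible competitor for $\cpct^U(A)$, hence for $\cpct^U(B)$.

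\textbf{Step 3: estimate $\mathcal{E}_1(\psi u)$.} Using the Leibniz rule for energy measures (the identity (\ref{E:energymeasure}) together with the product structure of $\Gamma$ on $\mathcal{B}$), one gets the standard bound
\[
\mathcal{E}(\psi u)^{1/2}\le \left\|\psi\right\|_{L_\infty}\mathcal{E}(u)^{1/2}+\left\|u\right\|_{L_\infty}\mathcal{E}(\psi)^{1/2}
\le \mathcal{E}(u)^{1/2}+\mathcal{E}(\psi)^{1/2},
\]
and $\left\|\psi u\right\|_{L_2}\le\left\|u\right\|_{L_2}$. Therefore $\mathcal{E}_1(\psi u)\le C\big(\mathcal{E}_1(u)+\mathcal{E}(u)^{1/2}\mathcal{E}(\psi)^{1/2}\big)$ for an absolute constant $C$; since $\mathcal{E}_1(u)<\varepsilon$ and $\mathcal{E}(\psi)$ is a fixed finite quantity, this tends to $0$ as $\varepsilon\to 0$ (the cross term behaves like $\varepsilon^{1/2}$). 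Hence $\cpct^U(B)\le\cpct^U(A)\le\mathcal{E}_1(\psi u)\to 0$, giving $\cpct^U(B)=0$, which is the contrapositive of the claim.

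\textbf{Main obstacle.} The one genuinely delicate point is that $\mathcal{F}^U$ is defined with \emph{Dirichlet boundary conditions} (closure of $C_0(U)\cap\mathcal{F}$), so the competitor really must have support inside $U$; the cutoff $\psi$ from the special standard core is exactly what repairs this, and verifying $\psi u\in C_0(U)\cap\mathcal{F}$ (rather than merely in $\mathcal{F}$) is where the topological hypotheses enter. A secondary technical nuisance is matching the pointwise ``$\ge 1$ on $A$'' condition after truncation — this is handled by arranging $A\subset V$ and $\psi\equiv 1$ on $\overline{V}$ before estimating. Everything else is the routine Leibniz-type energy estimate and the subadditivity/outer-regularity of capacity.
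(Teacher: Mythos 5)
Your overall strategy (prove the contrapositive $\cpct(B)=0\Rightarrow\cpct^U(B)=0$ by localizing a near-minimizing global competitor with a cutoff) is a legitimate route, and you correctly identify that this is the nontrivial direction, since the elementary comparison only gives $\cpct(B)\leq\cpct^U(B)$. But Step 3 has a genuine gap that breaks the argument as written. From your own inequality $\mathcal{E}(\psi u)^{1/2}\leq\left\|\psi\right\|_{L_\infty}\mathcal{E}(u)^{1/2}+\left\|u\right\|_{L_\infty}\mathcal{E}(\psi)^{1/2}$ one gets, after squaring, a term $\left\|u\right\|_{L_\infty}^2\mathcal{E}(\psi)$ which your displayed bound $\mathcal{E}_1(\psi u)\leq C\bigl(\mathcal{E}_1(u)+\mathcal{E}(u)^{1/2}\mathcal{E}(\psi)^{1/2}\bigr)$ silently drops. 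Since $u\geq 1$ on $A$ forces $\left\|u\right\|_{L_\infty}\geq 1$ while $\mathcal{E}(\psi)$ is a fixed positive constant, this term is $O(1)$, not $O(\varepsilon^{1/2})$, and $\mathcal{E}_1(\psi u)$ does not tend to zero from your estimate. To repair it you must control $\int \widetilde{u}^{\,2}\,d\Gamma(\psi)$ directly: take a sequence $u_n$ with $\mathcal{E}_1(u_n)\to 0$, pass to a subsequence with $\widetilde{u_n}\to 0$ quasi-everywhere, and use that energy measures charge no sets of zero capacity (\cite[Lemma 3.2.4]{FOT}) plus bounded convergence. This extra input is unavoidable precisely because on fractals $\Gamma(\psi)$ is typically singular with respect to $m$, so no $L_2(X,m)$ or $L_\infty$ control on $u$ suffices. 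Two smaller defects: in Step 1 a \emph{finite} cover of $B$ by sets with closures in $U$ need not exist (take $B=U$); you need a countable exhaustion and countable subadditivity of $\cpct^U$. And $\psi u$ is generally not in $C_0(U)\cap\mathcal{F}$ since $u$ need not be continuous; its membership in $\mathcal{F}^U$ must instead be justified by the standard fact that an element of $\mathcal{F}$ with compact support contained in $U$ lies in the $\mathcal{E}_1$-closure of $C_0(U)\cap\mathcal{F}$.

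For comparison, the paper's proof is entirely different and much shorter: it is probabilistic. Under Assumption \ref{A:abscont}, \cite[Theorems 4.1.2 and 4.2.1]{FOT} identify positive capacity with non-polarity, for both $(\mathcal{E},\mathcal{F})$ and its part $(\mathcal{E}^U,\mathcal{F}^U)$. Then $\cpct^U(B)>0$ means $\mathbb{P}_x(\sigma_B^U<\infty)>0$ for some $x$, and since hitting $B$ before leaving $U$ implies hitting $B$ at all, $\mathbb{P}_x(\sigma_B<\infty)>0$, i.e. $B$ is non-polar for $Y$ and hence $\cpct(B)>0$. Your analytic route, once the energy-measure step above is supplied, buys independence from the probabilistic machinery, but at the cost of substantially more work.
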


\begin{proof}
As $(\mathcal{E},\mathcal{F})$ satisfies the absolute continuity hypothesis by Assumption \ref{A:abscont}, so does $(\mathcal{E}^U,\mathcal{F}^U)$. By \cite[Theorems 4.1.2 and 4.2.1]{FOT} $B$ cannot be polar for $Y^U$, that is $\mathbb{P}_x(\sigma_B^U<\infty)>0$ for some $x\in U$. Then clearly also $\mathbb{P}_x(\sigma_B<\infty)>0$, which means that $B$ is not polar for $Y$, and the result follows by another application of the cited theorems.
\end{proof}

We obtain the following consequences of irreducibility.

\begin{proposition}\label{P:sufficient}
If $U\subset X$ is a connected open set and $(\mathcal{E}^U,\mathcal{F}^U)$ is irreducible, then any set $D$ disconnecting $U$ has positive capacity.
\end{proposition}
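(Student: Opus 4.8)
The plan is to argue by contraposition: assuming $D$ disconnects $U$ into $U = U_1 \cup U_2 \cup D$ with $U_1, U_2$ disjoint nonempty open sets, I will show that if $\cpct(D) = 0$ then $(\mathcal{E}^U, \mathcal{F}^U)$ fails to be irreducible, by exhibiting a nontrivial invariant set, namely $U_1$ itself. The heuristic is clear: since $(\mathcal{E},\mathcal{F})$ is strongly local (so $Y$ is a diffusion with continuous paths) and $D$ is polar (by Assumption~\ref{A:abscont} together with \cite[Theorems 4.1.2, 4.2.1]{FOT}, $\cpct(D)=0$ iff $D$ is polar), the killed process $Y^U$ starting in $U_1$ cannot pass from $U_1$ to $U_2$ without hitting $D$, but it almost surely never hits $D$; hence starting from $m$-a.e.\ point of $U_1$ it stays in $U_1$ until it is killed.

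First I would make the topological setup precise: $U_1$ and $U_2$ are relatively open in $U$, hence open in $X$, and $\overline{U_1} \cap U \subset U_1 \cup D$, with the analogous statement for $U_2$; in particular the only way a continuous path in $U$ can move from $U_1$ to $U_2$ is by passing through $D$. Second, I would invoke polarity of $D$: there is a Borel set $N \supset D$ with $\mathbb{P}_x(\sigma_N < \infty) = 0$ for all $x \in X$, and this remains true for the killed process $Y^U$ (killing only shortens the lifetime, so $\{\sigma^U_D<\infty\}\subset\{\sigma_D<\infty\}$). Third, combining these two facts with path continuity of $Y$ on $[0,\tau_U)$: for $m$-a.e.\ starting point $x \in U_1$, the trajectory $t \mapsto Y^U_t$ stays in $U_1$ (before being sent to $\Delta$), because to reach $U \setminus U_1 = U_2 \cup D$ it would have to touch $D$ on an event of probability zero, and $U_2$ is separated from $U_1$ within $U$ by $D$. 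This shows $\mathbb{P}_x(Y^U_t \in U_1 \cup \{\Delta\} \text{ for all } t) = 1$ for $m$-a.e.\ $x \in U_1$.

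Fourth, I would translate this pathwise statement into the analytic definition of an invariant set. For $f \in L_2(U,m)$ and $x \in U \setminus U_1 = U_2$ (up to an $m$-null set), $P_t^U(\mathbf{1}_{U_1} f)(x) = \mathbb{E}_x[\mathbf{1}_{U_1}(Y^U_t)\, f(Y^U_t)]$, and since $Y^U$ started at $x \in U_2$ a.s.\ stays in $U_2 \cup \{\Delta\}$ (the mirror-image argument: to enter $U_1$ it must cross $D$), we have $\mathbf{1}_{U_1}(Y^U_t) = 0$ a.s., so $P_t^U(\mathbf{1}_{U_1} f)(x) = 0$. Thus $U_1$ is an invariant set for $(\mathcal{E}^U, \mathcal{F}^U)$ with $m(U_1) > 0$ and $m(U \setminus U_1) \geq m(U_2) > 0$ (both positive since $m$ charges every nonempty open set), contradicting irreducibility. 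Alternatively, one can run this purely analytically: use $\cpct(D)=0$ together with Theorem~\ref{T:reconstruct}-type cutoff reasoning to see that $\mathbf{1}_{U_1}\in\mathcal{F}^U_{loc}$ behaves like a locally constant function, so that $\mathbf{1}_{U_1}\mathcal{F}^U \subset \mathcal{F}^U$ and $\mathcal{E}^U(\mathbf{1}_{U_1}u, (1-\mathbf{1}_{U_1})v)=0$, which is exactly the Dirichlet-form characterization of invariance of $U_1$ (cf.\ \cite[Theorem 1.6.1]{FOT}).

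The main obstacle I anticipate is the rigorous handling of the ``a path in $U$ cannot cross from $U_1$ to $U_2$ without meeting $D$'' step at the level of the process rather than individual continuous curves: one must know that $\mathbb{P}_x$-a.s.\ the sample path $t \mapsto Y^U_t(\omega)$ is continuous on the stochastic interval $[0, \tau_U)$ and takes values in $U$ there, and then apply an intermediate-value / connectedness argument to this single (random) curve. This is where strong locality of $(\mathcal{E},\mathcal{F})$ (hence the diffusion property, via \cite{FOT}) is essential, and where one should be careful that the exceptional $m$-null set of ``bad'' starting points, as well as the polar set $N\supset D$, are handled uniformly. Everything else — openness of $U_1, U_2$ in $X$, positivity of $m$ on open sets, the equivalence of polarity and zero capacity under Assumption~\ref{A:abscont} — is routine given the results already recalled in Section~\ref{S:Setup}.
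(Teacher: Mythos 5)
Your proposal is correct and follows essentially the same route as the paper: both arguments rest on the two facts that the associated diffusion has continuous paths which must hit $D$ in order to pass from $U_1$ to $U_2$ inside $U$, and that under Assumption \ref{A:abscont} a set of zero capacity is polar. The paper phrases this as a direct contradiction (irreducibility forces paths from $U_1$ to $U_2$ with positive probability, hence $\cpct^U(D)>0$, and Lemma \ref{L:cap} then gives $\cpct(D)>0$), whereas you argue by contraposition via the invariance of $U_1$ for the killed form; the mathematical content is the same.
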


\begin{proof}
Assume that $U=U_1\cup U_2\cup D$ with $U_1, U_2$ open and the union being disjoint. We may assume $m(D)=0$ because $m$ charges no set of zero capacity. Since $m(U_1)>0$ and $m(U_2)>0$ neither $U_1$ nor $U_2$ can be invariant for $(\mathcal{E}^U,\mathcal{F}^U)$. Hence without loss of generality there exist some $t>0$, $f\in L_2(U,m)$ with $f\geq 0$ and some set $A\subset U_1$ with $m(A)>0$ such that 
\[\mathbb{E}_x[(\mathbf{1}_{U_2}f)(Y_t^U)]=P_t^U(\mathbf{1}_{U_2}f)(x)>0\ \text{ for all $x\in A$.}\]
In particular, for all $x\in A$ the $\mathbb{P}_x$-probability to have a connected path $Y^U([0,t])$ joining $x$ and $U_2$ is positive. Each such path necessarily hits the relative boundary of $U_2$ with respect to $U$, and this relative boundary is contained in $D$. If $\cpct^U(D)=0$ then $D$ must be polar for $Y^U$ and in particular 
\[\mathbb{P}_x(Y_t\in U_2)\leq \mathbb{P}_x(\sigma_D^U<\infty)=0\ \text{ for all $x\in A$},\]
a contradiction. Consequently $\cpct^U(D)>0$, and by Lemma \ref{L:cap} the conclusion follows.
\end{proof}

\begin{corollary}
Assume that there exists a cofinal set $\mathbb{V}$ of finite open covers $\mathcal{V}=\left\lbrace V_\beta\right\rbrace_{\beta \in I}$ satisfying (\ref{E:Lebesgue}) and such that for any open set $V_\beta$ from any cover $\mathcal{V}$ of $\mathbb{V}$, the Dirichlet form $(\mathcal{E}^{V_\beta},\mathcal{F}^{V_\beta})$ is irreducible. Then the conclusion of Theorem \ref{T:nontrivial2} holds.
\end{corollary}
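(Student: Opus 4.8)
The plan is to deduce the corollary directly from Proposition \ref{P:sufficient} together with Theorem \ref{T:nontrivial2}; no new construction is needed, only a verification that the irreducibility hypothesis supplies precisely the capacity condition demanded by the theorem.

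First I would fix the cofinal set $\mathbb{V}$ furnished by the hypothesis and take an arbitrary cover $\mathcal{V}=\left\lbrace V_\beta\right\rbrace_{\beta\in I}\in\mathbb{V}$. By condition (\ref{E:Lebesgue}) each member $V_\beta$ is a \emph{connected} open subset of $X$, and by assumption the restricted Dirichlet form $(\mathcal{E}^{V_\beta},\mathcal{F}^{V_\beta})$ is irreducible. Proposition \ref{P:sufficient} then applies verbatim to each $V_\beta$ and tells us that any set $D$ which disconnects $V_\beta$ satisfies $\cpct(D)>0$.

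Next I would unwind the definition of a set disconnecting for $\mathbb{V}$. Such a $D$ is, by definition, disconnecting for every connected open set occurring in any cover of $\mathbb{V}$; in particular it is disconnecting for (at least) one of the sets $V_\beta$ treated in the previous step, whence the previous step yields $\cpct(D)>0$. Thus the capacity condition appearing in the hypothesis of Theorem \ref{T:nontrivial2} holds for this very cofinal family $\mathbb{V}$. Invoking Theorem \ref{T:nontrivial2} with this choice of $\mathbb{V}$ then gives exactly its conclusion: if $\check{H}^1(X)$ is nontrivial then $\mathcal{H}^1(X)$ must be nontrivial.

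The argument is essentially immediate and I do not expect a genuine obstacle; the only point requiring any attention is matching the quantifiers in the definition of ``disconnecting for $\mathbb{V}$'' with those in Proposition \ref{P:sufficient}. Since a set disconnecting for $\mathbb{V}$ disconnects each of the relevant connected sets $V_\beta$, irreducibility of even a single one of them already forces positive capacity, so the two statements dovetail without friction and the corollary follows.
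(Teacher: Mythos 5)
Your argument is correct and is exactly the intended one: the paper states this corollary without proof precisely because it follows immediately by feeding Proposition \ref{P:sufficient} (irreducibility of $(\mathcal{E}^{V_\beta},\mathcal{F}^{V_\beta})$ forces positive capacity of any set disconnecting the connected open set $V_\beta$) into the hypothesis of Theorem \ref{T:nontrivial2}. Your remark on matching the quantifiers --- a set disconnecting for $\mathbb{V}$ disconnects every relevant $V_\beta$, so a single application of the proposition already yields $\cpct(D)>0$ --- is the only point of substance, and you have handled it correctly.
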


\section{Form Laplacian and harmonic $1$-forms}\label{S:harmonicforms}

In this section we first recall some notions of vector calculus as proposed in \cite{HRT}. Then we give definitions for the Hodge Laplacian on $1$-forms and for harmonic $1$-forms. Finally, we define a specific functional needed in the next section to formulate Navier-Stokes type equations. 

Due to the self-duality of $\mathcal{H}$ we regard the elements of $\mathcal{H}$ also as vector fields and $\partial$ as a generalization of the classical \emph{gradient operator}. It may be viewed as an unbounded closed linear operator from $L_2(X,m)$ to $\mathcal{H}$ with domain $\mathcal{F}$. Let $\mathcal{B}^\ast$ denote the dual of $\mathcal{B}$ with the usual norm
\[\left\|u\right\|_{\mathcal{B}^\ast}=\sup\left\lbrace |u(f)|: f\in\mathcal{B}, \left\|f\right\|_\mathcal{B}\leq 1\right\rbrace.\]  
Given a vector field of form $g\partial f$, its \emph{divergence} can be defined similarly as in \cite{HRT} by
\begin{equation}\label{E:divergence}
\partial^\ast(g\partial f):=-\int_Xg\:d\Gamma(\cdot,f)\in\mathcal{B}^\ast.
\end{equation}
The map $g\partial f\mapsto \partial^\ast(g\partial f)$ extends continuously to a bounded linear operator $\partial^\ast$ from $\mathcal{H}$ into $\mathcal{B}^\ast$, as shown in \cite[Lemma 3.1]{HRT}. Note that this is a definition in a distributional sense. Seen as an unbounded operator from $\partial^\ast:\mathcal{H}\to L_2(X,m)$ with domain 
\begin{multline}
\dom\:\partial^\ast:=\left\lbrace v\in\mathcal{H}: \text{ there exists $v^\ast\in L_2(X,m)$ such that }\right.\notag\\
\left. \left\langle u,v^\ast\right\rangle_{L_2(X,m)}=-\left\langle \partial u,v\right\rangle_\mathcal{H} \text{ for all $u\in\mathcal{F}$}\right\rbrace,
\end{multline} 
the operator $-\partial^\ast$ is seen to be the adjoint of $\partial$, i.e.
\begin{equation}\label{E:IbP}
\left\langle u,\partial^\ast v\right\rangle_{L_2(X,m)}=-\left\langle \partial u, v\right\rangle_\mathcal{H},\ \ u\in\mathcal{F}.
\end{equation}
Sometimes $\partial$ is referred to as the \emph{codifferential} associated with $\partial$. Its domain $\dom\:\partial^\ast$ is dense in $\mathcal{H}$ and $(\partial^\ast, \dom\partial^\ast)$ is a closed linear operator, \cite[Theorem VIII.1]{RS}. Let $A$ denote the infinitesimal generator of $(\mathcal{E},\mathcal{F})$. For any $f\in\mathcal{B}$ we have 
\begin{equation}\label{E:generatorA}
Af=\partial^\ast\partial f
\end{equation}
in $\mathcal{B}^\ast$, see \cite[Lemma 3.2]{HRT}, and for functions $f$ from the domain $\dom\:A$ of $A$ the identity (\ref{E:generatorA}) holds in $L_2(X,m)$. It is useful to record a lemma on suitable cores.


\begin{lemma} \label{L:firstlemma}\mbox{}
There is an $\mathcal{E}_1$-dense subspace $\mathcal{C}$ of $\mathcal{F}$ such that for all $g\in\mathcal{C}$ we have $Ag\in\mathcal{B}$.
Its image $\partial(\mathcal{C})$ under the derivation $\partial$ is contained in $\dom\:\partial^\ast$.
\end{lemma}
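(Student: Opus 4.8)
\textbf{Proof proposal for Lemma \ref{L:firstlemma}.} The plan is to build $\mathcal{C}$ by cutting off the spectral representation of the generator $A$, so that functions in $\mathcal{C}$ are in the domain of every power of $A$ and in particular have $Ag\in\mathcal{F}$; then an additional boundedness argument upgrades $Ag\in\mathcal{F}$ to $Ag\in\mathcal{B}=\mathcal{F}\cap L_\infty(X,m)$. Concretely, let $(E_\lambda)_{\lambda\geq 0}$ be the spectral resolution of the nonnegative self-adjoint operator $-A$ on $L_2(X,m)$, and set
\[
\mathcal{C}_0:=\bigcup_{n\in\mathbb{N}}\operatorname{Ran}\big(\mathbf{1}_{[0,n]}(-A)\big)=\bigcup_{n}E_n\big(L_2(X,m)\big).
\]
Every $g\in\mathcal{C}_0$ lies in $\dom\:A^k$ for all $k$, the spectral calculus gives $Ag\in\dom\:A^{k}\subset\mathcal{F}$ for all $k$, and $\mathcal{C}_0$ is dense in $L_2(X,m)$; moreover $\mathcal{C}_0\subset\dom\:A\subset\mathcal{F}$ and, since $\mathcal{E}(g)=\langle -Ag,g\rangle_{L_2}\leq \|{-A}g\|_{L_2}\|g\|_{L_2}\leq n\|g\|_{L_2}^2$ for $g=E_ng$, one checks that $\mathcal{C}_0$ is in fact $\mathcal{E}_1$-dense in $\mathcal{F}$ (its $\mathcal{E}_1$-closure contains all of $\dom\:A$, hence all of $\mathcal{F}$). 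This gives an $\mathcal{E}_1$-dense subspace on which $Ag$ is energy-finite, but not yet essentially bounded.

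To obtain $Ag\in L_\infty(X,m)$ as well, I would intersect with a truncation at the level of $L_\infty$: since $\mathcal{B}$ is dense in $\mathcal{F}$ and the resolvent $G_1=(1-A)^{-1}$ maps $L_\infty(X,m)$ into $\mathcal{B}$ (Markovianity of $(P_t)$ and the contraction bound $\|G_1 u\|_\infty\leq\|u\|_\infty$), one can take $\mathcal{C}:=G_1(\mathcal{C}_0\cap L_\infty(X,m))$, or more simply argue as follows. For $g\in\mathcal{C}_0$ write $Ag=g-(1-A)g$; if instead we start from $u\in\mathcal{B}$ and set $g:=G_1 G_1 u=(1-A)^{-2}u$, then $g\in\dom\:A^2\subset\dom\:A\subset\mathcal{F}$, while $Ag=(1-A)g-g=G_1 u - G_1G_1 u\in G_1(L_\infty)\subset\mathcal{B}$ because $G_1 u\in\mathcal{B}\subset L_\infty$ and $G_1G_1u\in G_1(L_\infty)\subset\mathcal{B}$. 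Thus with
\[
\mathcal{C}:=\big\{(1-A)^{-2}u:\ u\in\mathcal{B}\big\}
\]
we have $Ag\in\mathcal{B}$ for every $g\in\mathcal{C}$, and $\mathcal{C}$ is $\mathcal{E}_1$-dense in $\mathcal{F}$: indeed $(1-A)^{-2}$ has dense range in $(\mathcal{F},\mathcal{E}_1)$ since its range contains $\dom\:A^2$, which is a core for $(\mathcal{E},\mathcal{F})$ (it is $\mathcal{E}_1$-dense because $(1-A)^{-1}$ is an $\mathcal{E}_1$-contraction with dense range $\dom\:A$, and iterating preserves density).

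Finally, for the last assertion, let $g\in\mathcal{C}$. For every $u\in\mathcal{F}$ the integration-by-parts identity (\ref{E:IbP}) together with (\ref{E:generatorA}) gives, since $Ag\in\mathcal{B}\subset L_2(X,m)$,
\[
\langle \partial u,\partial g\rangle_\mathcal{H}=\mathcal{E}(u,g)=\langle u,-Ag\rangle_{L_2(X,m)},
\]
so $\partial g\in\dom\:\partial^\ast$ with $\partial^\ast\partial g=Ag$. This proves $\partial(\mathcal{C})\subset\dom\:\partial^\ast$. The main obstacle — and the only genuinely nontrivial point — is the simultaneous requirement that $\mathcal{C}$ be $\mathcal{E}_1$-dense in $\mathcal{F}$ \emph{and} that $Ag$ land in $L_\infty$, not merely in $L_2$ or $\mathcal{F}$; this is exactly what the double resolvent $(1-A)^{-2}$ applied to bounded functions buys us, using the $L_\infty$-contractivity of the resolvent (a consequence of the Markov property of $(\mathcal{E},\mathcal{F})$) to keep $G_1 u$ bounded while the second resolvent restores enough regularity for $Ag$ itself to be bounded.
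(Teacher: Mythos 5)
Your final construction is essentially the paper's: the paper takes $\mathcal{C}=G_1(\mathcal{F}\cap C(X))$ and, exactly as you do, writes $Ag$ as a combination of $f$ and $G_1f$ and invokes the $L_\infty$-contractivity of $G_1$ (Markovianity) to get $Ag\in\mathcal{B}$, followed by a resolvent-range density argument in $\mathcal{E}_1$ and the identification $\partial^\ast\partial g=Ag$ via (\ref{E:generatorA}). Your opening spectral-projection detour and the second resolvent are both unnecessary (a single $G_1$ applied to $\mathcal{B}$ already gives $Ag=G_1u-u\in\mathcal{B}$), and there is a harmless sign slip in your formula for $Ag$, but the argument is correct and takes the same route.
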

\begin{proof}
Set $\mathcal{C}:=\left\lbrace G_1f: f\in\mathcal{F}\cap C(X)\right\rbrace$. Each $g\in \mathcal{F}$ is of the form $g=(A-I)^{-1/2}h$ with $h\in L_2(X,m)$. As the range of $G_1$ is dense in $L_2(X,m)$, the function $h$ can be approximated in the $L_2(X,m)$-norm by a sequence of functions $G_1h_n$ with $h_n\in \mathcal{F}\cap C_0(X)$ and therefore $g$ can be approximated in $\mathcal{E}_1$ by the functions $G_1h_n$. For $g=G_1f\in\mathcal{C}$ with $f\in\mathcal{F}\cap C(X)$ we have $Ag=f+G_1f\in\mathcal{F}$, and the Markov property implies
\[\left\|G_1f\right\|_{L_\infty(X,m)}\leq\int_0^\infty e^{-t} \left\|P_tf\right\|_{L_\infty(X,m)}dt\leq \left\|f\right\|_{L_\infty(X,m)}.\]
The last statement follows from (\ref{E:generatorA}).
\end{proof}

After these preliminaries we can define the Hodge Laplacian. Set
\[\dom\:\Delta_1:=\left\lbrace \omega\in \dom\:\partial^\ast: \partial^\ast\omega\in\mathcal{F}\right\rbrace.\]
\begin{definition}\label{D:formlaplace}
The operator $\Delta_1$ with domain $\dom\:\Delta_1$ on $\mathcal{H}$, given by 
\begin{equation}\label{E:formlaplace}
\Delta_1\omega:=\partial\partial^\ast\omega, \ \ \omega\in \dom\:\Delta_1.
\end{equation}
will be called the \emph{Hodge Laplacian} on $1$-forms associated with $(\mathcal{E},\mathcal{F})$.
\end{definition}

\begin{theorem}\label{T:hodgesa}
The Hodge Laplacian $(\Delta_1, \dom\:\Delta_1)$ is a self-adjoint operator on $\mathcal{H}$.
\end{theorem}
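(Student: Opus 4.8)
The plan is to identify $\Delta_1$ with (the negative of) an operator of the form $TT^\ast$ and to invoke von Neumann's classical self-adjointness theorem. First I would regard the derivation $T:=\partial$ as an unbounded operator from $L_2(X,m)$ to $\mathcal{H}$ with domain $\dom T=\mathcal{F}$; this $T$ is densely defined, since $\mathcal{F}$ is dense in $L_2(X,m)$ by regularity of $(\mathcal{E},\mathcal{F})$, and it is closed, as recorded above. By \eqref{E:IbP} the Hilbert space adjoint of $T$ is precisely the codifferential with a sign, namely $T^\ast=-\partial^\ast$ with $\dom T^\ast=\dom\,\partial^\ast$: indeed $\langle Tu,v\rangle_\mathcal{H}=\langle\partial u,v\rangle_\mathcal{H}=-\langle u,\partial^\ast v\rangle_{L_2(X,m)}=\langle u,(-\partial^\ast)v\rangle_{L_2(X,m)}$ for all $u\in\mathcal{F}$ and $v\in\dom\,\partial^\ast$, and the defining property of $\dom\,\partial^\ast$ matches verbatim the definition of the domain of the Hilbert space adjoint.

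Next I would apply the theorem of von Neumann: for a densely defined closed operator $T$ between Hilbert spaces, the operator $TT^\ast$ equipped with the natural domain $\{\omega\in\dom T^\ast:\ T^\ast\omega\in\dom T\}$ is self-adjoint (and nonnegative). It then remains to match this abstract operator with $(\Delta_1,\dom\,\Delta_1)$. Since $T^\ast=-\partial^\ast$ and $\dom T=\mathcal{F}$ is a linear subspace, the natural domain above equals $\{\omega\in\dom\,\partial^\ast:\ \partial^\ast\omega\in\mathcal{F}\}=\dom\,\Delta_1$, and for $\omega\in\dom\,\Delta_1$ one computes $TT^\ast\omega=\partial(-\partial^\ast\omega)=-\partial\partial^\ast\omega=-\Delta_1\omega$. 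Hence $\Delta_1=-TT^\ast$, which is self-adjoint because negation preserves self-adjointness (the domain is unchanged and $(-S)^\ast=-S^\ast$); in particular $\langle\Delta_1\omega,\omega\rangle_\mathcal{H}=-\|\partial^\ast\omega\|_{L_2(X,m)}^2\le0$, consistently with the sign conventions of \eqref{E:divergence} and \eqref{E:IbP}.

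I do not expect a genuine obstacle: the argument is essentially a dictionary lookup, and the only steps requiring explicit verification are the two identifications just made (that $\dom\,\partial^\ast$ as defined here is indeed the domain of the Hilbert space adjoint of $\partial$, and that the von Neumann domain of $TT^\ast$ coincides with $\dom\,\Delta_1$). Should one prefer an argument that does not quote von Neumann's theorem directly, the alternative I would use is this: $\partial^\ast$ is closed, being the adjoint of the densely defined closed operator $\partial$, so $q(\omega_1,\omega_2):=\langle\omega_1,\omega_2\rangle_\mathcal{H}+\langle\partial^\ast\omega_1,\partial^\ast\omega_2\rangle_{L_2(X,m)}$ is a densely defined, closed, nonnegative symmetric form on $\dom\,\partial^\ast$; by the first representation theorem its associated self-adjoint operator is exactly $I+TT^\ast=I-\Delta_1$, so $\operatorname{Ran}(I-\Delta_1)=\mathcal{H}$ follows from coercivity and Riesz representation, and combining this with the symmetry of $\Delta_1$ (a one-line consequence of \eqref{E:IbP}, giving $\langle\Delta_1\omega_1,\omega_2\rangle_\mathcal{H}=-\langle\partial^\ast\omega_1,\partial^\ast\omega_2\rangle_{L_2(X,m)}$) yields self-adjointness of $\Delta_1$ by the basic self-adjointness criterion applied to the semibounded operator $-\Delta_1$.
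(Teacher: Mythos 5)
Your proof is correct, and it takes a genuinely different (and in fact more complete) route than the paper's. The paper argues via a core: using Lemma \ref{L:firstlemma} it produces an $\mathcal{E}_1$-dense subspace $\mathcal{C}\subset\mathcal{F}$ with $A(\mathcal{C})\subset\mathcal{B}$, deduces $\partial(\mathcal{C})\subset\dom\Delta_1$ from $\Delta_1(\partial f)=\partial(Af)$, combines this with $\ker\partial^\ast=\mathcal{H}^1(X)$ to obtain density of $\dom\Delta_1$ in $\mathcal{H}$, and then verifies the symmetry identity $\left\langle \Delta_1\eta,\omega\right\rangle_\mathcal{H}=-\left\langle \partial^\ast\eta,\partial^\ast\omega\right\rangle_{L_2(X,m)}=\left\langle \eta,\Delta_1\omega\right\rangle_\mathcal{H}$; as written this establishes only $\Delta_1\subset\Delta_1^\ast$, i.e.\ that $\Delta_1$ is densely defined and symmetric, with the remaining inclusion $\dom\Delta_1^\ast\subset\dom\Delta_1$ left implicit. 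Your argument supplies exactly that missing half: once one checks that $\partial:\mathcal{F}\subset L_2(X,m)\to\mathcal{H}$ is densely defined and closed (asserted at the start of Section \ref{S:harmonicforms}, and following from $\left\|\partial f\right\|_\mathcal{H}^2=\mathcal{E}(f)$ together with the closedness of the form) and that $\dom\partial^\ast$ as defined in the paper is literally the domain of the Hilbert-space adjoint $T^\ast=-\partial^\ast$ of $T=\partial$ --- both of which you verify --- von Neumann's theorem yields self-adjointness and nonnegativity of $TT^\ast=-\Delta_1$ on the natural domain, which you correctly identify with $\dom\Delta_1$, and density of that domain comes for free. What the paper's approach buys is an explicit core $\partial(\mathcal{C})$ together with $\mathcal{H}^1(X)$ inside $\dom\Delta_1$, which is useful in the sequel; what yours buys is the full self-adjointness statement in one stroke, with the sign computation $\left\langle\Delta_1\omega,\omega\right\rangle_\mathcal{H}=-\left\|\partial^\ast\omega\right\|_{L_2(X,m)}^2\leq 0$ as a bonus. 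Your alternative via the closed nonnegative form $q$ is equally valid; it is essentially the standard proof of von Neumann's theorem specialised to this setting.
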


Note that by (\ref{E:Hodge}) and (\ref{E:IbP}) we have 
\begin{equation}\label{E:divkernel}
ker\:\partial^\ast=\mathcal{H}^1.
\end{equation}

\begin{proof}
Since $\partial$ is a densely defined closed linear operator, the self-adjointness of $\partial\partial^\ast$ follows from \cite[Theorem 2, Section VII.3]{Y80} or \cite[Problem VIII.45]{RS} (note that $(\partial^\ast)^\ast=\partial$ because $\partial$ is closed).
\end{proof}

\begin{remark}\label{R:laplaceremark}\mbox{}
Our Definition \ref{D:formlaplace} is adequate if the space of $2$-forms is trivial. In the introduction we have already pointed out that several of our preceding results suggest that this is the case. 
\end{remark}

Based on Definition \ref{D:formlaplace} harmonic $1$-forms can be defined.

\begin{definition}
A $1$-form $\omega\in\mathcal{H}$ is called \emph{harmonic} if $\omega\in \dom\:\Delta_1$ and $\Delta_1\omega=0$.
\end{definition}

Obviously they form a subspace of $\mathcal{H}$. Moreover, we have the following result.

\begin{theorem}\label{T:critharmonic}
A $1$-form $\omega\in\mathcal{H}$ is harmonic if and only if it is a member of $\mathcal{H}^1(X)$. 
\end{theorem}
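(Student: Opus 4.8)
The plan is to show the two inclusions $\{\omega:\Delta_1\omega=0\}\subset\mathcal{H}^1(X)$ and $\mathcal{H}^1(X)\subset\dom\Delta_1$ with $\Delta_1\omega=0$, using the factorization $\Delta_1=\partial\partial^\ast$ together with the identity $ker\:\partial^\ast=\mathcal{H}^1(X)$ recorded in (\ref{E:divkernel}).

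For the forward direction, suppose $\omega\in\dom\Delta_1$ and $\Delta_1\omega=\partial\partial^\ast\omega=0$. Since $\partial$ has trivial kernel on $\mathcal{F}$ — this follows because $\mathcal{E}$ admits a spectral gap, so $\|\partial f\|_\mathcal{H}^2=\mathcal{E}(f)=0$ forces $f$ constant, and constants have $\partial f=0$ anyway, hence $\partial$ is injective modulo the (already quotiented) constants, or more directly one uses that $Im\:\partial$ is closed and $\partial$ restricted to $(\ker\partial)^\perp$ is bounded below — we get $\partial^\ast\omega=0$, i.e.\ $\omega\in ker\:\partial^\ast=\mathcal{H}^1(X)$ by (\ref{E:divkernel}). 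Actually the cleanest argument avoids discussing $\ker\partial$ on functions: if $\partial\partial^\ast\omega=0$ then $\langle\partial\partial^\ast\omega,\eta\rangle_\mathcal{H}=0$ for all $\eta$; taking $\eta$ in a suitable core and using (\ref{E:IbP}) gives $\langle\partial^\ast\omega,\partial^\ast\eta\rangle_{L_2}=0$, and since $\partial^\ast\omega\in\mathcal{F}$ one can take $\eta$ so that $\partial^\ast\eta$ ranges over a dense set, forcing $\partial^\ast\omega=0$. Either way the conclusion is $\omega\in\mathcal{H}^1(X)$.

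For the reverse direction, let $\omega\in\mathcal{H}^1(X)$. By (\ref{E:divkernel}) we have $\partial^\ast\omega=0$, which in particular lies in $\mathcal{F}$, so $\omega\in\dom\Delta_1$ by the definition $\dom\Delta_1=\{\omega\in\dom\partial^\ast:\partial^\ast\omega\in\mathcal{F}\}$; here one must first check $\mathcal{H}^1(X)\subset\dom\partial^\ast$, but this is immediate since for $\omega\in ker\:\partial^\ast$ (in the distributional $\mathcal{B}^\ast$ sense) we have $\langle u,0\rangle_{L_2}=0=-\langle\partial u,\omega\rangle_\mathcal{H}$ for all $u\in\mathcal{F}$, so $\omega^\ast=0$ witnesses membership in $\dom\partial^\ast$. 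Then $\Delta_1\omega=\partial\partial^\ast\omega=\partial 0=0$, so $\omega$ is harmonic.

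The one genuine subtlety — not a deep obstacle, but the point that needs care — is the interplay between the \emph{distributional} codifferential $\partial^\ast:\mathcal{H}\to\mathcal{B}^\ast$ and the \emph{$L_2$} codifferential with domain $\dom\partial^\ast$: one must be sure that $\omega\in\mathcal{H}^1(X)$, characterized as $(Im\:\partial)^\perp$, really does have $\partial^\ast\omega=0$ as an element of $L_2(X,m)$ and not merely as an element of $\mathcal{B}^\ast$. This is exactly the content of (\ref{E:divkernel}), whose proof uses (\ref{E:Hodge}) and (\ref{E:IbP}): $\omega\perp Im\:\partial$ means $\langle\partial u,\omega\rangle_\mathcal{H}=0$ for all $u\in\mathcal{F}$, which by (\ref{E:IbP}) says precisely that $0\in L_2(X,m)$ represents the functional $u\mapsto-\langle\partial u,\omega\rangle_\mathcal{H}$, i.e.\ $\omega\in\dom\partial^\ast$ with $\partial^\ast\omega=0$; conversely $\partial^\ast\omega=0$ gives orthogonality to $Im\:\partial$. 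So the theorem is really just the unwinding of (\ref{E:divkernel}) together with the definition of $\dom\Delta_1$, and I would present it in three or four lines along those lines.
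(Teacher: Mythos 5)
Your overall strategy --- reducing everything to the identity $ker\:\partial^\ast=\mathcal{H}^1(X)$ of (\ref{E:divkernel}) and to the definition of $\dom\:\Delta_1$ --- is exactly the one the paper uses, and both your reverse direction and your careful unwinding of (\ref{E:divkernel}) at the end are correct. The gap is in the forward direction, and both of your proposed arguments fail at the same point: the kernel of $\partial$ on $\mathcal{F}$ is \emph{not} trivial. Since $X$ is compact, Remark \ref{R:specialstandard} (with $K=U=X$) gives $1\in\mathcal{F}$, and strong locality gives $\mathcal{E}(1)=0$, hence $\partial 1=0$ in $\mathcal{H}$; by the spectral gap, $ker\:\partial$ consists precisely of the constants. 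Consequently $\partial\partial^\ast\omega=0$ only tells you that $\partial^\ast\omega$ is an $m$-a.e.\ constant, not that it vanishes. Your ``cleanest'' variant has the same defect in a different guise: the closure of the range of $\partial^\ast$ is $(ker\:\partial)^\perp$, i.e.\ the mean-zero functions rather than all of $L_2(X,m)$, so $\langle\partial^\ast\omega,\partial^\ast\eta\rangle_{L_2(X,m)}=0$ for all $\eta$ again only forces $\partial^\ast\omega$ to be constant.

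The repair is one line: for any $\omega\in\dom\:\partial^\ast$, taking $u=1$ in (\ref{E:IbP}) gives
$\int_X\partial^\ast\omega\,dm=\langle 1,\partial^\ast\omega\rangle_{L_2(X,m)}=-\langle\partial 1,\omega\rangle_{\mathcal{H}}=0$,
so a constant $\partial^\ast\omega$ must vanish. With this step inserted after ``$\partial^\ast\omega$ is constant,'' your proof is complete and coincides in substance with the paper's.
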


We will refer to $\mathcal{H}^1(X)$ as the \emph{space of harmonic $1$-forms}. 

\begin{proof}
If $\omega\in \mathcal{H}^1(X)=ker\:\partial^\ast$, then it is obviously in $\dom\:\Delta_1$ and $\Delta_1\omega=0$. Conversely, if $\omega\in\mathcal{H}$ is harmonic, then $\Delta_1\omega\in ker\:\partial^\ast$ since
\[\left\langle \partial u, \Delta_1\omega\right\rangle_\mathcal{H}=0=\left\langle u,\omega^\ast\right\rangle\]
holds for all $u\in \mathcal{F}$ if $\omega^\ast=0$. Consequently $\partial^\ast\omega=0$, that is, $\omega\in ker\:\partial^\ast$.
\end{proof}

Finally, recall the definition (\ref{E:positive}) of the bilinear map $\Gamma_\mathcal{H}$. It is the derivation $\partial\Gamma_\mathcal{H}(u)$ of $\Gamma_\mathcal{H}(u)$, $u\in\mathcal{H}$, that will be needed below. We define it as a linear functional on the possibly smaller domain
\[\dom_c\:\partial^\ast:=\left\lbrace v\in \dom\:\partial^\ast: \partial^\ast v\in C(X)\right\rbrace \subset \dom\:\partial^\ast\]
by setting
\begin{equation}\label{E:partialGamma}
\partial\Gamma_\mathcal{H}(u)(v):=-\Gamma_\mathcal{H}(u)(\partial^\ast v)=-\int_X\partial^\ast v\:d\Gamma_\mathcal{H}(u)\ \ , \ \ v\in \dom_c\:\partial^\ast.
\end{equation}
Note that for any divergence free vector field $v\in \mathcal{H}^1(X)$ we have $\partial\Gamma_\mathcal{H}(u)(v)=0$. Similarly, given an arbitrary Borel measure $\mu$ on $X$, let a functional $\partial \mu$ on $\dom_c\:\partial^\ast$ be defined by
\begin{equation}\label{E:arbitraryBorel}
\partial\mu(v):=-\int_X\partial^\ast v\:d\mu,\ \ v\in \dom_c\:\partial^\ast.
\end{equation}

\section{Navier-Stokes equations on fractals}\label{S:NS}

We finally come to an application of the results developed in Sections \ref{S:locally} and \ref{S:nontrivial}. More precisely, we will consider a Navier-Stokes type system of PDE on a compact connected topologically one-dimensional space $X$ and use Theorem \ref{T:Hcoincide} together with the decomposition (\ref{E:Hodge}) to show that just as in classical smooth cases, only steady state solutions exist for the boundary free case. In this case nontrivial solutions can exist if and only if $\check{H}^1(X)$ is nontrivial. If a boundary is specified, further nontrivial solutions may exist. For resistance forms his will be discussed in the next section. Note that these results depend only on the topological dimension, cf. Remark \ref{R:examples} below.

For an incompressible and homogeneous fluid in a Euclidean domain the Navier-Stokes system with viscosity $\nu>0$ and zero outer forcing writes
\begin{equation}\label{E:NS}
\begin{cases}
\frac{\partial u}{\partial t} +(u\cdot \nabla)u-\nu\Delta u +\nabla p=0 \\
div\:u=0.
\end{cases}
\end{equation}
$u$ is the velocity field, $p$ the pressure, and a solution consists of both. See for instance \cite{AMR, ChM, Te83}. On a smooth one-dimensional manifold the system reduces to the Euler type equation
\[\frac{\partial u}{\partial t}+\frac{\partial p}{\partial x}=0.\] 
Taking into account the classical polar decomposition $u=\nabla \varphi + w$ of vector fields into a gradient and a solenoidal part and denoting by $\mathcal{P}$ the projection onto the space of solenoidal fields, $u$ is seen to be a stationary solution (steady state solutions) because 
\[0=-\mathcal{P}(\frac{\partial p}{\partial x})=\mathcal{P}(\frac{\partial u}{\partial t})=\frac{\partial u}{\partial t}.\]

The aim of this section is to record that a similar behavior occurs for some counterpart of (\ref{E:NS}) within our setup and under the assumption that the topological dimension of $X$ is less than two. 
\begin{remark}\label{R:examples}
We emphasize that $X$ does not have to possess any smoothness properties and therefore the sequel applies in particular to highly singular spaces like finitely ramified fractals \cite{Ki01, T08}, generalized Sierpinski carpets \cite{BB99, BBKT, MTW} or Barlow-Evans-Laakso spaces \cite{BE04, La00, Stei10, S-POTA, ST-BE}, with examples of \emph{any possible Hausdorff-dimension $1\leq d<\infty$} among them. 
\end{remark}

Recall that $X$ is assumed to be a compact connected topologically one-dimensional metric space. We rewrite the divergence condition $\partial^\ast u=0$. By Theorem \ref{T:Hcoincide}, (\ref{E:Hodge}) and (\ref{E:IbP}) we have $ker\:\partial^ \ast=\mathcal{H}^1(X)$, therefore prospective solutions will be elements of $\mathcal{H}^1(X)$. We use the Hodge Laplacian $\Delta_1$ according to (\ref{E:formlaplace}) and replace the convection term by $\frac{1}{2}\partial\Gamma_\mathcal{H}(u)$. With these substitutions (\ref{E:NS}) can formally be restated as
\begin{equation}\label{E:NS2}
\begin{cases}
\frac{\partial u}{\partial t}+\frac{1}{2}\partial \Gamma_\mathcal{H}(u)-\nu\Delta_1 u+\partial p=0\\
\partial^\ast u=0.
\end{cases}
\end{equation}
This is the Navier-Stokes system on $X$ we investigate.

\begin{remark}
Of course this is just one model and already in the smooth case there is ambiguity which model to formulate (see for instance \cite{Nag97} and the references cited therein), let alone in our case. Our motivations to use formulation (\ref{E:NS2}) are as follows.

For a vector field $(u=(u_1,u_2,u_3)$ on $\mathbb{R}^3$ we have the identity
\[\frac{1}{2}\nabla|u|^2=(u\cdot \nabla)u+u\times \curl u.\]
In classical vector analysis on $\mathbb{R}^3$ usually $1$- and $2$-forms are identified with vector fields and $0$- and $3$-forms with functions. In terms of differential forms $\curl u$ is defined (respectively recovered) by viewing the vector field $u$ as a $1$-form, taking its derivation, which is a $2$-form, considering the image of this $2$-form under the Hodge star operator, which gives again a $1$-form, and then translating this $1$-form back into a vector field $\curl u$. See for instance \cite{AMR}. In the introduction we have already argued that in our setup the space of $2$-forms should be trivial, and also our Hodge Laplacian is defined with this idea in mind, cf. Remark \ref{R:laplaceremark}. If we pursue this idea, $\curl u$ should be zero, and a weak version of the remaining identity above should lead to an analog of
\begin{equation}\label{E:reducedidentity}
-\frac{1}{2}\int|u|^2\:\diverg v\:dx=\frac{1}{2}\int v\:\nabla|u|^2\:dx=\int v(u\cdot \nabla)u\:dx .
\end{equation}
In our case the Euclidean norm should be replaced by a family of norms on the fibers of $\mathcal{H}$ over $X$ as considered in \cite[Section 2]{HRT}. This suggests to replace the expression on the left hand side of (\ref{E:reducedidentity}) by
\[-\frac{1}{2}\int_X\partial^\ast vd\Gamma_\mathcal{H}(u).\]
Note also that for locally exact vector fields the integrand of the middle integral in (\ref{E:reducedidentity}) locally rewrites $\frac{1}{2}v\nabla|\nabla h|^2$. In our language this becomes $\frac{1}{2}\partial\Gamma_\mathcal{H}(\partial h)(v)$.
We finally remark that by polarization $\partial\Gamma_\mathcal{H}$ may be seen as a trilinear form on $\mathcal{H}^1(X)\times \dom_c\: A\times \dom_c\:A$, where $\dom_c\:A=\left\lbrace f\in \dom\:A: Af\in C(X)\right\rbrace$, such that if $u$ is divergence free, we have $\partial\Gamma_\mathcal{H}(u,\partial g)(\partial g)=-\partial\Gamma_\mathcal{H}(u,\partial g)(f)$ for any $f,g\in \dom_c\:A$ by the Leibniz rule (\ref{E:Leibniz}). This reminds a bit of the classical theory, see \cite[Sections I.2.3 and I.2.4]{Te83}.
\end{remark}

In the boundary free case a weak formulation of problem (\ref{E:NS2}) can be made rigorous. As usual we will test against divergence free vector fields $v\in\mathcal{H}^1=ker\:\partial^\ast$. We interpret $\partial p$ in the measure sense (\ref{E:arbitraryBorel}) and therefore obtain $\partial p(v)=0$ for all such $v$. Now recall the definitions (\ref{E:formlaplace}) and (\ref{E:partialGamma}). We say that a square integrable $\dom\:\partial^\ast$-valued function $u$ on $[0,\infty)$ provides a \emph{weak solution to (\ref{E:NS2})} with initial condition $u_0\in\mathcal{H}^1(X)=ker\:\partial^\ast$ if 
\begin{equation}\label{E:weaksol}
\begin{cases}
\left\langle u(t),v\right\rangle_\mathcal{H}-\left\langle u_0,v\right\rangle_\mathcal{H} +\int_0^t \partial\Gamma_\mathcal{H}(u(s))(v)ds+\nu\int_0^t\left\langle\partial^\ast u(s),\partial^\ast v\right\rangle_{L_2(X,m)}ds=0\\
\partial^\ast u(t)=0
\end{cases}
\end{equation}
for a.e. $t\in [0,\infty)$ and all $v\in \mathcal{H}^1(X)$. By one-dimensionality this system immediately simplifies further to
\[\begin{cases}
\left\langle u(t)-u_0,v\right\rangle_\mathcal{H}=0\\
\partial^\ast u(t)=0
\end{cases}\]
for a.e. $t\in [0,\infty)$ and all such $v$, and we obtain the following result. 

\begin{theorem}\mbox{}
Any weak solution $u$ of (\ref{E:NS2}) is harmonic and stationary, i.e. $u$ is independent of $t\in [0,\infty)$. Given an initial condition $u_0$  the corresponding weak solution is uniquely determined.
\end{theorem}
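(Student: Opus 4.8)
The plan is to read off everything from the simplification of the weak formulation already carried out in the text, namely that \eqref{E:weaksol} reduces to $\left\langle u(t)-u_0,v\right\rangle_\mathcal{H}=0$ for all $v\in\mathcal{H}^1(X)$ together with $\partial^\ast u(t)=0$, valid for a.e.\ $t$. The key observation driving the argument is the identity $ker\:\partial^\ast=\mathcal{H}^1(X)$ from \eqref{E:divkernel} (equivalently, from Theorem~\ref{T:Hcoincide} and the Hodge decomposition \eqref{E:Hodge}): the divergence-free condition $\partial^\ast u(t)=0$ says precisely that $u(t)\in\mathcal{H}^1(X)$, and $u_0\in\mathcal{H}^1(X)$ by hypothesis. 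Since in one dimension the convection and viscosity terms both annihilate test vectors in $\mathcal{H}^1(X)$ — the middle integral vanishes because $\partial\Gamma_\mathcal{H}(u(s))(v)=0$ for $v\in\mathcal{H}^1(X)$ as noted after \eqref{E:partialGamma}, and the viscosity integral vanishes because $\partial^\ast v=0$ — all that survives is the statement that $u(t)-u_0$ is $\mathcal{H}$-orthogonal to every element of $\mathcal{H}^1(X)$.

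The argument then proceeds in a few short steps. First, fix (for a.e.\ $t$) the vector $w(t):=u(t)-u_0\in\mathcal{H}$; since $u(t)\in\mathcal{H}^1(X)$ and $u_0\in\mathcal{H}^1(X)$, and $\mathcal{H}^1(X)$ is a closed subspace of $\mathcal{H}$, we have $w(t)\in\mathcal{H}^1(X)$. Second, the reduced weak formulation gives $\left\langle w(t),v\right\rangle_\mathcal{H}=0$ for all $v\in\mathcal{H}^1(X)$; taking $v=w(t)$ yields $\left\|w(t)\right\|_\mathcal{H}^2=0$, hence $u(t)=u_0$ in $\mathcal{H}$ for a.e.\ $t\in[0,\infty)$. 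This proves simultaneously that $u$ is stationary, that $u(t)=u_0\in\mathcal{H}^1(X)$ is harmonic (by Theorem~\ref{T:critharmonic}, membership in $\mathcal{H}^1(X)$ is equivalent to harmonicity), and that the solution is uniquely determined by $u_0$. For existence one simply checks that the constant path $u(t)\equiv u_0$ satisfies \eqref{E:weaksol}: $\partial^\ast u_0=0$ since $u_0\in\mathcal{H}^1(X)=ker\:\partial^\ast$, the first two terms cancel, and the two integrals vanish termwise for every $v\in\mathcal{H}^1(X)$ by the reasons above.

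There is essentially no hard analytic obstacle here; the real content has already been discharged upstream, in establishing $ker\:\partial^\ast=\mathcal{H}^1(X)$ (which rests on Theorem~\ref{T:Hcoincide}, i.e.\ on the denseness of locally exact forms, hence ultimately on topological one-dimensionality) and in the simplification of the weak system. The only point requiring a line of care is the handling of the "for a.e.\ $t$" quantifier: one must argue that after discarding a single null set of times one has $u(t)=u_0$ for all remaining $t$, so that $u$ genuinely represents the constant path, and that this is consistent with $u$ being a square-integrable $\dom\:\partial^\ast$-valued function. I would phrase this by noting that the reduced identity holds for a.e.\ $t$ with the \emph{same} $u_0$ on the right, so the $\mathcal{H}$-valued function $t\mapsto u(t)$ is a.e.\ equal to the constant $u_0$, which is the asserted stationarity and uniqueness. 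The statement that $u$ is harmonic is then immediate from $u_0\in\mathcal{H}^1(X)$ and Theorem~\ref{T:critharmonic}.
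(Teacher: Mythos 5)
Your proposal is correct and follows essentially the same route as the paper: the paper itself reduces the weak formulation \eqref{E:weaksol} to $\left\langle u(t)-u_0,v\right\rangle_{\mathcal{H}}=0$ with $\partial^{\ast}u(t)=0$ by exactly the two cancellations you cite, and the theorem then follows by testing against $v=u(t)-u_0\in ker\:\partial^{\ast}=\mathcal{H}^1(X)$, as you do. Your explicit handling of the a.e.-in-$t$ quantifier and the existence check for the constant path are details the paper leaves implicit, but they introduce nothing new in approach.
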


In the boundary free case the following is a consequence of Theorems \ref{T:nontrivial1} and \ref{T:nontrivial2}.
\begin{corollary}
Assume that there exists a cofinal set $\mathbb{V}$ of finite open covers $\mathcal{V}=\left\lbrace V_\beta\right\rbrace_{\beta \in I}$ satisfying (\ref{E:Lebesgue}) and such that any disconnecting set for $\mathbb{V}$ has positive capacity. Then a nontrivial solution to (\ref{E:NS2}) exists if and only if $\check{H}^1(X)$ is nontrivial.
\end{corollary}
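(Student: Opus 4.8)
The plan is to reduce the assertion to the equivalence ``$\mathcal{H}^1(X)\neq\{0\}$ if and only if $\check{H}^1(X)\neq\{0\}$'', which has effectively already been established, by first identifying the set of weak solutions of (\ref{E:NS2}) with $\mathcal{H}^1(X)$.

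First I would make precise the description of the solution set. By the preceding theorem every weak solution $u$ of (\ref{E:NS2}) is stationary and harmonic, so $u(t)=u_0\in\mathcal{H}^1(X)$ for all $t\geq0$. Conversely, for any $u_0\in\mathcal{H}^1(X)$ the constant-in-time assignment $u(t):=u_0$ satisfies (\ref{E:weaksol}): the term $\nu\int_0^t\langle\partial^\ast u(s),\partial^\ast v\rangle_{L_2(X,m)}\,ds$ vanishes because $\partial^\ast u_0=0$, and the convection term $\int_0^t\partial\Gamma_\mathcal{H}(u(s))(v)\,ds$ vanishes because $v\in\mathcal{H}^1(X)$ implies $\partial\Gamma_\mathcal{H}(u_0)(v)=0$ by the remark following (\ref{E:partialGamma}); the first line of (\ref{E:weaksol}) thus reduces to $\langle u(t)-u_0,v\rangle_\mathcal{H}=0$, which holds trivially. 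Hence assigning to a weak solution its initial value is a bijection onto $\mathcal{H}^1(X)$, so a nontrivial weak solution of (\ref{E:NS2}) exists precisely when $\mathcal{H}^1(X)\neq\{0\}$.

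It then remains to chain two results already proved. If $\mathcal{H}^1(X)\neq\{0\}$, Theorem \ref{T:nontrivial1} gives $\check{H}^1(X)\neq\{0\}$. For the converse, the standing hypothesis of the corollary --- a cofinal set $\mathbb{V}$ of finite open covers satisfying (\ref{E:Lebesgue}) all of whose disconnecting sets have positive capacity --- is exactly the hypothesis of Theorem \ref{T:nontrivial2}, which then yields that $\check{H}^1(X)\neq\{0\}$ forces $\mathcal{H}^1(X)\neq\{0\}$. Combining with the previous paragraph gives the asserted equivalence.

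There is no real obstacle here, since the substantive work is carried by the stationarity theorem and by Theorems \ref{T:nontrivial1} and \ref{T:nontrivial2}. The only point meriting a line of care is the ``existence'' half of the solution-set description, i.e. confirming that every $u_0\in\mathcal{H}^1(X)$ truly yields a weak solution, so that ``a nontrivial solution exists'' is genuinely equivalent to, and not merely a consequence of, ``$\mathcal{H}^1(X)\neq\{0\}$''.
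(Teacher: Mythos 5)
Your proposal is correct and follows essentially the same route the paper intends: the paper states the corollary as a direct consequence of Theorems \ref{T:nontrivial1} and \ref{T:nontrivial2}, with the identification of the set of weak solutions of (\ref{E:NS2}) with $\mathcal{H}^1(X)$ (stationarity in one direction, the vanishing of the viscosity and convection terms for constant $u(t)=u_0\in\mathcal{H}^1(X)=ker\:\partial^\ast$ in the other) supplied by the preceding theorem and the remark after (\ref{E:partialGamma}). Your explicit verification of the existence half is exactly the step the paper leaves implicit, and nothing further is needed.
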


We conclude the section with a heuristic remark concerning the pressure $p$.
\begin{remark}
We did not define strong solutions to (\ref{E:NS}), i.e. solutions that do not need testing. However, if there were a differentiable $dom\:\partial^\ast$-valued function $u$ such that (\ref{E:NS}) holds in a measure-valued sense, then this $u$ should also satisfy the weak formulation (\ref{E:NS2}). Therefore any such $u$ must be stationary, and by (\ref{E:partialGamma}) and (\ref{E:arbitraryBorel}) we would obtain 
\[p=-\frac{1}{2}\Gamma_\mathcal{H}(u),\]
seen as an equality of measures.
\end{remark}

\section{The case of resistance forms}\label{S:resistance}

In this section we additionally assume that the local Dirichlet form under consideration is induced by a regular resistance form $(\mathcal{E},\overline{\mathcal{F}})$ on $X$. See \cite{IRT, Ki03, Ki12, T08} for background and precise definitions. In the resistance form context Neumann derivatives are well-defined, and it is not difficult to see that if the Navier-Stokes system (\ref{E:NS2}) is considered with a nonempty boundary, it may have additional nontrivial solutions arising from solutions of a related Neumann problem.

Let $(\mathcal{E},\overline{\mathcal{F}})$ be a local resistance form on $X$, \cite[Definition 2.8]{Ki03} and let $R$ be the associated resistance metric. We consider the topological space $(X,R)$. An open ball of radius $r>0$ and with center $x\in X$ in this space is denoted by $B_R(x,r)$. For any Borel regular measure $m$ on $(X,R)$ such that $0<m(B(x,r))<\infty$, the space $(\overline{F}\cap L_2(X,m), \mathcal{E}_1)$ is Hilbert, and denoting by $\mathcal{F}$ the closure of $C_0(X)\cap\overline{\mathcal{F}}$ in it, we obtain a local regular Dirichlet form $(\mathcal{E},\mathcal{F})$ on $L_2(X,m)$. See for instance \cite[Section 9]{Ki12}. We assume the following:
\begin{assumption}\label{A:resistassumption}
\text{The space $(X,R)$ is compact, connected, and topologically one-dimensional.}
\end{assumption}
Under Assumption \ref{A:resistassumption} the results of the previous sections may be applied to the induced Dirichlet form $(\mathcal{E},\mathcal{F})$.

\begin{remark}
It is conjectured that any set that carries a regular resistance form becomes a topologically one-dimensional space when equipped with the associated resistance metric.
\end{remark}

We provide a few notions and references related to resistance forms. Let $B\subset X$ be a finite set. The points of $B$ will be interpreted as boundary points. By $G_B$ we denote the Green operator associated with the boundary $B$ with respect to $(\mathcal{E},\overline{\mathcal{F}})$, \cite[Definition 5.6]{Ki03}, and $\mathcal{D}_{B,0}^L$ its image in $\overline{\mathcal{F}}$. Let $\mathcal{H}_B$ denote the $B$-harmonic functions with respect to $(\mathcal{E},\overline{\mathcal{F}})$, \cite[Definition 2.16]{Ki03}, and note that $\overline{\mathcal{F}}=\overline{F}_B\oplus\mathcal{H}_B$, where
\[\overline{\mathcal{F}}_B:=\left\lbrace u\in\overline{\mathcal{F}}: u|_B=0\right\rbrace.\]
A $B$-harmonic function $h$ is harmonic on $B^c$ in the Dirichlet form sense, more precisely, it satisfies
\[\mathcal{E}(h,\psi)=0\]
for all $\psi\in\overline{\mathcal{F}}_B$. The space $\mathcal{D}^L:=\mathcal{D}^L_{B,0}+\mathcal{H}_B$ is seen to be independent of the choice of $B$, \cite[Theorem 5.10]{Ki03}. For any $u\in \mathcal{D}^L$ and any $p\in X$ the Neumann derivative $(du)_p$ of $u$ at $p$ can be defined. We refer the reader to \cite[Theorems 6.6 and 6.8]{Ki03}. Now let $\varphi$ be a function on $B$. A function $h_\varphi\in\overline{\mathcal{F}}$ is called a \emph{solution to the Neumann problem on $B^c$ with boundary values $\varphi$} if it 
is harmonic on $B^c$ and satisfies 
\[(dh)_p=\varphi(p)\]
for all $p\in B$. 
Such a Neumann solution $h_\varphi$ exists if and only if $\varphi$ is such that $$\sum_{p\in B}\varphi(p)=0.$$ If it exists, it is unique in $\mathcal{H}_B$ up to an additive constant. 
These last two assertions follow from the connectedness of $X$ in effective resistance metric assumed above. This connectedness is equivalent to the fact that only constant functions have zero energy. Therefore the linear map $\{h(p)\}_{p\in B}\mapsto\{(dh)_p\}_{p\in B}$ is linear with constant kernel, which implies the assertion by elementary linear algebra.

Regarding a nonempty finite set $B\subset X$ as a boundary, the system (\ref{E:NS2}) is to be viewed on $B^c$
(playing the role of a domain). Recalling (\ref{E:weaksol}), we will therefore use test vector fields $v$ vanishing outside $B^c$. Given an open set $U\subset X$, let
\[\mathcal{H}(U):=\clos\lin\left\lbrace a\otimes\mathbf{1}_V: a\in\mathcal{B},\ V\subset U \text{ open }\right\rbrace.\] 
For simplicity we will regard $\mathcal{B}_b(U)$ as subspace of $\mathcal{B}_b(X)$ under continuation by zero. We will use the following result.
\begin{remark}\mbox{}
\begin{enumerate}
\item[(i)] For any open set $U\subset X$ the equality
\[\mathcal{H}(U)=\clos\lin\left\lbrace \omega\mathbf{1}_U: \omega\in\mathcal{H}\right\rbrace\]
holds: Clearly $\mathcal{H}(U)$ is contained in the space on the right-hand side. To see the converse inclusion, note that the space on the right-hand side contains all simple tensors $a\otimes b\in\mathcal{B}\otimes\mathcal{B}_b(U)$, which can be seen using a Dynkin type argument as in \cite[Theorem 4.1]{H11}. Therefore it must also contain all limits of linear combinations of such elements, and by their denseness in $\mathcal{H}$ the desired inclusion follows using (\ref{E:boundedactions}).
\item[(ii)] For a finite set $B\subset X$ the space $\mathcal{H}(B^c)$ is contained in the space $Im\:\partial$: For any $a\otimes \mathbf{1}_{B^c} \in\mathcal{B}\otimes\mathcal{B}_b(B^c)$ the continuous function $a$ clearly is constant on each of the finitely many distinct points of $B$. Therefore $a\otimes\mathbf{1}_{B^c}$ is exact by Proposition \ref{P:gradient}, and by (i) all members of $\mathcal{H}(B^c)$ are.
\end{enumerate}
\end{remark}

Let $B\subset X$ be finite. We will now say that a square integrable $dom\:\partial^\ast$-valued function $u$ on $[0,T]$ provides a \emph{weak solution to (\ref{E:NS2}) on $B^c$} if 
\begin{equation}\label{E:NSboundary}
\begin{cases}
\left\langle u(t),v\right\rangle_\mathcal{H}-\left\langle u(0),v\right\rangle_\mathcal{H} +\int_0^t \partial\Gamma_\mathcal{H}(u(s))(v)ds+\int_0^t\left\langle \partial^\ast u(s),\partial^\ast v\right\rangle_{L_2(X,m)}ds=0\\
\left\langle u(t), \partial\psi\right\rangle_\mathcal{H}=0
\end{cases}
\end{equation}
for a.e. $t\in [0,T]$, all $v\in \dom\:\partial^\ast \cap \mathcal{H}(B^c)$ and all $\psi\in\overline{\mathcal{F}}_B$.

\begin{theorem}
Let Assumption $\ref{A:resistassumption}$ be valid and let $B\subset X$ be finite. If $h$ is the unique, up to an additive constant, harmonic function on  $B^c$ with normal derivatives $\varphi$ on $B$, then 
\[u(t)=\partial h,\ \  t\in[0,\infty),\]
is the unique weak solution to (\ref{E:NS2}) on $B^c$ with the Neumann boundary values $\varphi$ on $B$.
\end{theorem}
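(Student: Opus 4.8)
The plan is to check that the stationary candidate $u(t)\equiv\partial h$ satisfies both lines of \eqref{E:NSboundary} and then that it is the only weak solution on $B^c$ realising the Neumann data $\varphi$. Two conventions organise the bookkeeping. First, for $h$ harmonic on $B^c$ in the Dirichlet form sense, the relevant meaning of $\partial^\ast\partial h$ is the one furnished by \eqref{E:divergence} and \eqref{E:generatorA}: by the Gauss--Green formula for resistance forms (\cite[Theorems 6.6 and 6.8]{Ki03}) it is the discrete measure $\sum_{p\in B}(dh)_p\,\delta_p$ carried by the finite set $B$, and all pairings in \eqref{E:NSboundary} involving $\partial^\ast u$ are read in this distributional sense; this is consistent because the admissible test fields that actually enter the estimates may, by density (Lemma \ref{L:firstlemma}) together with the preceding remark on $\mathcal H(B^c)\subset Im\:\partial$, be taken in $\partial(\mathcal C)$. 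Second, since $u(\cdot)$ does not depend on $t$, the difference $\langle u(t),v\rangle_\mathcal H-\langle u(0),v\rangle_\mathcal H$ in the first line of \eqref{E:NSboundary} vanishes identically, so the content of the evolution equation is the vanishing of the time-independent integrand $\partial\Gamma_\mathcal H(\partial h)(v)+\langle\partial^\ast\partial h,\partial^\ast v\rangle_{L_2(X,m)}$.

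I would first dispose of the boundary line: by the definition of a $B$-harmonic function (\cite[Definition 2.16]{Ki03}), $\langle\partial h,\partial\psi\rangle_\mathcal H=\mathcal E(h,\psi)=0$ for every $\psi\in\overline{\mathcal F}_B$, which is exactly the second equation of \eqref{E:NSboundary}, for all $t$. For the evolution line, I would use Proposition \ref{P:gradient} together with the Dirichlet-subdomain machinery of Section \ref{S:subdomains} (applied to the zero-dimensional compact set $F=B$) and the description $\mathcal H(B^c)\subset Im\:\partial$ to reduce, by density, to test fields of the form $v=\partial g$ with $g\in\mathcal B$ locally constant on an open neighbourhood $N$ of $B$; locality of $(\mathcal E,\mathcal F)$ then forces $\partial^\ast v=Ag$ to vanish on $N$. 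The viscosity term $\langle\partial^\ast\partial h,\partial^\ast v\rangle$ pairs the measure $\partial^\ast\partial h$, carried by $B$, against a function vanishing near $B$, hence is $0$. For the convection term, $\Gamma_\mathcal H(\partial h)=\Gamma(h)$ and $\partial\Gamma_\mathcal H(\partial h)(v)=-\int_X Ag\,d\Gamma(h)$; applying the identity $\int_X\varphi\,d\Gamma(h)=\mathcal E(\varphi h,h)-\tfrac12\mathcal E(h^2,\varphi)$ with $\varphi=Ag$, the first summand vanishes because $\varphi h\in\mathcal F_{B^c}$ and $h$ is harmonic on $B^c$, while the remaining term $\mathcal E(h^2,Ag)$ is removed by an integration by parts exploiting that $g$ is locally constant near $B$ --- precisely the mechanism by which boundary contributions disappear in the classical Green identity on a one-dimensional domain. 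Thus \eqref{E:NSboundary} collapses, exactly as in the boundary-free case of Section \ref{S:NS}, to $\langle u(t)-u(0),v\rangle_\mathcal H=0$, which $u(t)\equiv\partial h$ trivially satisfies.

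For uniqueness, let $u$ be any weak solution on $B^c$ carrying the Neumann data $\varphi$. The boundary line of \eqref{E:NSboundary} forces $u(t)\in(\partial\overline{\mathcal F}_B)^\perp$, and the reduction above gives $\langle u(t)-u(0),v\rangle_\mathcal H=0$ for the admissible test fields, whence $u$ is stationary. Using $\overline{\mathcal F}=\overline{\mathcal F}_B\oplus\mathcal H_B$ with $\dim\mathcal H_B=\#B<\infty$, a stationary $u_0\in(\partial\overline{\mathcal F}_B)^\perp$ with Neumann values $\varphi$ on $B$ equals $\partial h$ for some $h\in\mathcal H_B$, and the linear map $\mathcal H_B\ni h\mapsto((dh)_p)_{p\in B}$ has kernel exactly the constants --- here one uses the connectedness of $(X,R)$, equivalently that only constant functions have zero energy --- so $\partial h$, hence $u$, is uniquely determined by $\varphi$.

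The main obstacle is the convection term in the evolution line. Unlike the viscosity term, which dies for support reasons once $\partial^\ast v$ vanishes near $B$, the measure $\Gamma(h)$ is spread over all of $X$, so the vanishing of $\partial\Gamma_\mathcal H(\partial h)(v)$ is not a support statement: it genuinely needs an integration-by-parts identity together with the harmonicity of $h$ on $B^c$ to cancel the interior contribution and to see that the boundary contribution vanishes. Carrying this out cleanly in the resistance-form language, and simultaneously verifying that the distributional reading of $\partial^\ast\partial h$ is the one making \eqref{E:NSboundary} consistent for $u=\partial h$, is the technical heart of the argument.
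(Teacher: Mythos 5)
Your treatment of the divergence condition is exactly the paper's: $\left\langle \partial h,\partial\psi\right\rangle_{\mathcal{H}}=\mathcal{E}(h,\psi)=0$ for $\psi\in\overline{\mathcal{F}}_B$. Your uniqueness sketch also follows the paper's intent (uniqueness of the Neumann solution up to constants, established just before the theorem). But the paper's proof of the evolution line is a one-sentence assertion that the first equation of (\ref{E:NSboundary}) vanishes for the stationary field $u\equiv\partial h$; you instead try to genuinely verify it for test fields $v$ with $\partial^\ast v\neq 0$, and that is where your argument does not close. First, the reduction ``by density'' to test fields $v=\partial g$ with $g$ locally constant near $B$ is not licensed: the functionals $v\mapsto\partial\Gamma_{\mathcal{H}}(u)(v)=-\int_X\partial^\ast v\,d\Gamma_{\mathcal{H}}(u)$ and $v\mapsto\left\langle\partial^\ast u,\partial^\ast v\right\rangle_{L_2(X,m)}$ are controlled by $\partial^\ast v$ (in the supremum or $L_2$ norm), not by $\left\|v\right\|_{\mathcal{H}}$, so their vanishing on an $\mathcal{H}$-dense subclass of $\dom\:\partial^\ast\cap\mathcal{H}(B^c)$ does not propagate to the whole class. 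Already for $X=[0,1]$, $B=\{0,1\}$, $h$ affine, one has $\partial\Gamma_{\mathcal{H}}(\partial h)(\partial g)=-a^2\bigl(g'(1)-g'(0)\bigr)$: this is $0$ when $g$ is locally constant near $B$ but not for general $g\in\dom\:A$, even though such $g$ can be approximated in $\mathcal{H}$-norm of the gradients by locally constant ones.

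Second, and more fundamentally, the cancellation of the remaining piece $\mathcal{E}(h^2,Ag)$ ``by an integration by parts exploiting that $g$ is locally constant near $B$'' is asserted rather than proved, and no such identity is available: $\mathcal{E}(h^2,\cdot)$ is not localized near $B$, $h^2$ is not harmonic on $B^c$, and the statement you need is precisely that the distributional derivative $\partial\Gamma_{\mathcal{H}}(\partial h)$ vanishes on $B^c$. That holds in the smooth one-dimensional case because $|h'|^2$ is locally constant where $h$ is harmonic, but it fails on fractals, where energy measures of harmonic functions are typically singular and far from locally constant (cf.\ the results of Hino cited in the paper). You correctly flag this term as ``the technical heart,'' but the heart is missing: either one restricts the test class so that $\partial^\ast v$ vanishes (as in the boundary-free case, where $v\in\ker\partial^\ast$ kills both the convection and viscosity terms at once — which is what the paper's two-line proof implicitly relies on), or one must supply an argument for the convection term that your proposal does not contain. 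A related inconsistency: reading $\partial^\ast\partial h$ as the measure $\sum_{p\in B}(dh)_p\delta_p$ is natural, but it is incompatible with the requirement in (\ref{E:NSboundary}) that $u$ be $\dom\:\partial^\ast$-valued with $\partial^\ast u\in L_2(X,m)$, so this reinterpretation needs to be stated and justified rather than folded in as a ``convention.''
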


\begin{proof} For $u\equiv\partial h$ the right hand side of the first equation vanishes. The divergence condition holds by harmonicity, 
\[\left\langle \partial h,\partial\psi\right\rangle_\mathcal{H}=\mathcal{E}(h,\psi)=0\]
for any $\psi\in \overline{\mathcal{F}}_B$. 
\end{proof}

\begin{remark}
Similarly as before, if $h$ were a 'strong' solution to (\ref{E:NS2}) on $B^c$, then we should observe the identity
\[p(t)=-\frac{1}{2}\Gamma(h), \ \  t\in[0,\infty).\]
\end{remark}

\end{document}